\numberwithin{equation}{section}
\definecolor{vuboranje}{cmyk}{0,.78,1.,0}
\definecolor{vubbleu}{cmyk}{1,.8,.16,.03}
\newtheorem{theorem}{Theorem}[section]
\newtheorem{prop}[theorem]{Proposition}
\newtheorem{cor}[theorem]{Corollary}
\Crefname{cor}{Corollary}{Corollaries}
\newtheorem{lemma}[theorem]{Lemma}
\newtheorem*{notation}{Notation}
\theoremstyle{definition}
\newtheorem{defin}[theorem]{Definition}
\newtheorem{remark}[theorem]{Remark}
\newtheorem{exa}[theorem]{Example}
\Crefname{algo}{Algorithm}{Algorithms}
\def\cA{\mathcal A}
\def\cC{\mathcal C}
\def\cE{\mathcal E}
\def\cP{\mathcal P}
\def\Z{\mathbb Z}
\def\C{\mathbb C}
\def\PG{\mathrm{PG}}
\def\Fq{\mathbb{F}_q}
\DeclareRobustCommand{\ZpZ}[1]{\ifthenelse{\equal{#1}{1}}{\Z/p\Z}{{(\Z/p\Z)^{#1}}}}
\def\PGL{\mathrm{PGL}}
\def\PSp{\mathrm{PSp}}
\def\PGO+{\mathrm{PGO}^+}
\def\PGO-{\mathrm{PGO}^-}
\def\PGO{\mathrm{PGO}}
\def\PGU{\mathrm{PGU}}
\def\Sym{\mathrm{Sym}}
\DeclareMathOperator{\rk}{rk}
\newcommand{\erz}[1]{\langle #1 \rangle}
\newcommand{\gauss}[2]{{#1\brack #2}_q}
\newcommand{\PS}{\mathrm{PS}}
\DeclareMathOperator{\rank}{rank}
\DeclareMathOperator{\col}{col}
\DeclareMathOperator{\opp}{opp}
\title{An algebraic approach to Erd\H{o}s-Ko-Rado sets of flags in spherical buildings II}
\author{Jan De Beule, Sam Mattheus, Klaus Metsch}
\date{}
\begin{document}

\maketitle

\begin{abstract}
	We continue our investigation of Erd\H{o}s-Ko-Rado (EKR) sets of flags in spherical buildings. In previous work, we used the theory of buildings and Iwahori-Hecke algebras to obtain upper bounds on their size. As the next step towards the classification of the maximal EKR-sets, we describe the eigenspaces for the smallest eigenvalue of the opposition graphs. We determine their multiplicity and provide a combinatorial description of spanning sets of these subspaces, from which a complete description of the maximal Erd\H{o}s-Ko-Rado sets of flags may potentially be found. This was recently shown to be possible for type $A_n$, $n$ odd, by Heering, Lansdown, and the last author by making use of the current work. \\[1em]
	Keywords: Erd\H{o}s-Ko-Rado, opposition, flags, buildings, eigenspaces \\
	MSC 2020: 05E18; 05C50; 05E30
\end{abstract}

\vspace{-1em}

\renewcommand{\thefootnote}{\fnsymbol{footnote}} 
\footnotetext{\noindent
	\textbf{Address of the authors}\\
	\{jan.de.beule,sam.mattheus\}@vub.be\\
	Department of Mathematics,
	Vrije Universiteit Brussel,\\
	Pleinlaan 2,
	B-1050 Elsene,
	Belgium\\
	The second author is supported by a postdoctoral fellowship 1267923N from the Research Foundation Flanders (FWO).

	\noindent
	klaus.metsch@math.uni-giessen.de\\
	Justus-Liebig-Universität, 
	Mathematisches Institut,\\ 
	Arndtstra\ss e 2, 
	D-35392, Gie\ss en, 
	Germany}     
\renewcommand{\thefootnote}{\arabic{footnote}} 

\tableofcontents %optional if it fits on the first page

\clearpage

\section{Introduction}

In previous work \cite{DBMM22}, we proved upper bounds for the size of Erd\H{o}s-Ko-Rado (EKR) sets in finite spherical buildings. These correspond to independent sets in the so-called opposition graph and bounds on their size were obtained by techniques from spectral graph theory, in particular the ratio bound. When considering EKR problems, the next step after obtaining bounds is to investigate sharpness and classify the extremal examples. The current work is a first step in that direction, and to the best of our knowledge, the first instance of progress towards classification where the underlying adjacency algebra is non-commutative. As such, known methods in the literature such as the theory of polytopes, dual width and spectral techniques do not seem to generalize easily to this setting, see the introduction of \cite{FL22} and the book \cite{GM16} for an extensive overview of these techniques. What we can do is use the property, which follows from equality in the ratio bound, that the characteristic vectors of maximal EKR-sets must be contained in the sum of the eigenspaces of the largest and smallest eigenvalues. This technique is sometimes referred to as the rank method and was pioneered by Godsil and Newman \cite{Newman04} and further developed by Godsil and Meagher \cite{GM16}. \\

The first step in this approach is to closely investigate these eigenspaces, by for example constructing a combinatorially defined basis of these spaces. One can then leverage this structural information to deduce a classification result, see \cite{GM16,Newman04}. Our contribution here is that we will construct combinatorially defined spanning sets of the relevant eigenspaces. It turns out that there are notable differences with classical EKR problems (such as sets and vector spaces). For instance, in the classical problems the sharp examples are so-called trivially intersecting families. These families span the relevant eigenspaces and the problem reduces to showing that there are no other characteristic functions of maximal EKR-sets contained in these eigenspaces. This is no longer the case for us. The dimensions of the eigenspaces are much larger (see \Cref{thm:multonmodules}) and the conjectural families of maximal EKR-sets span only a fraction of these spaces. This considerably increases the difficulty in obtaining classification results. However, using the results from this paper, Heering, Lansdown and the third author \cite{HLM24} have recently been able to classify maximal EKR-sets of flags in projective spaces of odd dimension, thereby confirming a conjecture from our earlier paper \cite{DBMM22}. \\

We will use the same set-up as in our previous work \cite{DBMM22}, and refer to it for all terminology and concepts that are not explained here. In particular, we will investigate the Iwahori-Hecke algebras associated to the spherical buildings, and their representations. A standard reference to this topic is the excellent book by Geck and Pfeiffer \cite{GP00}. Even though the structure of these algebras is quite well-understood over $\mathbb{C}$, the coordinatization in terms of maximal flags in finite spherical buildings seems to not have been considered in the literature before. For instance, the irreducible modules of the Iwahori-Hecke algebras are well-understood and can be described using standard tableaux \cite[Section 10.1]{GP00}. However, this does not provide us with any information since we work with a basis indexed by maximal flags, and it is unclear if one can describe the appropriate change-of-basis matrix. As such, we will build spanning sets of the relevant irreducible modules from scratch. \\

We will find these eigenvectors by prescribing invariance under an automorphism group of the opposition graph. This reduces the problem from finding eigenvectors of the large adjacency matrix of the opposition graph, to computations with a much smaller quotient matrix. In this way, we will be able to do the necessary computations by hand in general. In order to then show that the obtained eigenvectors span the eigenspace, we will relate the opposition graph on maximal flags to the opposition graph on partial flags. It is interesting that the eigenspaces of the smaller graph essentially control those of the larger, and it remains to be seen if this interaction could provide another approach to the classification problem. \\

We remark here that although our method is rather general, we will not treat all types of buildings in the same depth. To be precise: in type $A_n$, we will focus on odd $n$ for two reasons. Firstly, the bounds obtained in \cite{DBMM22} contain the expression $q^{(n+1)/2}$, which in general is not an integer for even $n$. Secondly, we don't have a good conjecture about what the maximal EKR-sets should be, but it may be the case that their description is also not too involved, as shown recently for $n = 4$ by Heering \cite{Heering24}.

In type $B_n$, there are up to two irreducible representations in which the smallest eigenvalue appears (see \Cref{thm:multonmodules}). Out of these, we will mainly concern ourselves with the reflection representation indexed by $([n-1],[1])$ \cite[Section 8.1.9]{GP00}. The reason for this is that the other irreducible representation indexed by $(\emptyset,[n])$ seems to either give rise to somewhat unexpected maximal EKR-sets when $G \in \{\PSp(2n,q),\PGO(2n+1,q)\}$ or we don't even have a conjecture for the size or structure of maximal EKR-sets when $G = \PGU(4n+2,q^2)$. This phenomenon already appears when we consider the opposition graph on generators \cite{PSV11}, and it seems unlikely that the rank method will be able to prove classification results in these cases. 

Finally, when the building is of type $D_n$, the opposition graph is intimately related to the opposition graph in type $B_n$ with structural parameter $e = 0$ (\Cref{lem:typeDfromtypeB}). As such, questions about the opposition graph in type $D_n$ can be easily transferred to type $B_n$ and studied there along with the other buildings of type $B_n$. \\

\noindent\textbf{Results.} By investigating the non-commutative adjacency algebras, we are able to make progress on the question of classifying maximal EKR-sets of flags in finite spherical buildings. Our main results are the computation of the multiplicity of the smallest eigenvalue in all types (\Cref{thm:multonmodules}) and the description of combinatorially defined spanning sets for all types (see \Cref{sec:specifictypes} for the details). The former relies heavily on understanding the Iwahori-Hecke algebras and interpreting classical results in our context. The latter is proven using a blend of ideas involving group actions, quotient matrices and linear algebra. \\

For the sake of the reader, we give the description of the eigenvectors below for type $A_n$ and in a specific module for type $B_n$. Recall that the associated geometries are vector (or equivalently, projective) spaces and polar spaces defined over finite fields.

\begin{theorem}
	Denote by $\cC$ the set of maximal flags in $\Fq^{2n}$, i.e.
	\[\cC = \{(U_0,U_1,\dots,U_{2n}) \,\, | \,\, \{0\} = U_0 \subset U_1 \subset \dots \subset U_{2n-1} \subset U_{2n} = \Fq^{2n}, \, \dim(U_i) = i\},\]
	and let $P$ be a one-dimensional subspace of $\Fq^{2n}$. For $j \in [n]$ define the function $\chi_j^P:\mathbb{C}^\cC \to \mathbb{C}$ by
	\begin{align*}
	\chi_j^P(U_0,U_1,\dots,U_{2n}) = \begin{cases}
	0 &\text{if } P \subseteq U_{n-j}\\
	q^j &\text{if } P \subseteq U_n \setminus U_{n-j} \\
	-1 &\text{if } P \subseteq U_{n+j} \setminus U_n\\ 
	0 &\text{if } P \notin U_{n+j}.
	\end{cases}
	\end{align*}
	
	Then the set of functions $\{\chi_j^P\}$, where $j \in [n]$ and $P$ ranges over all one-dimensional subspaces of $\Fq^{2n}$, is a spanning set for the eigenspace of the smallest eigenvalue of the opposition graph on maximal flags.
\end{theorem}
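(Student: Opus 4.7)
The plan is to proceed in two stages: first verify that each $\chi_j^P$ is an eigenvector for the smallest eigenvalue $\lambda_{\min}$ of the opposition graph, and then show that the collection $\{\chi_j^P\}$ spans the whole eigenspace $E_{\min}$.

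For the first stage I would exploit the $G_P$-invariance of $\chi_j^P$, where $G_P \leq \PGL(2n,q)$ is the stabilizer of the one-dimensional subspace $P$. The orbits of $G_P$ on $\cC$ are parametrised by the unique index $i_0 \in \{1,\dots,2n\}$ with $P \subseteq U_{i_0} \setminus U_{i_0-1}$, giving an equitable partition of $\cC$ into $2n$ cells. By the standard lifting principle, $\chi_j^P$ is an eigenvector of the opposition graph for eigenvalue $\lambda$ iff the $2n$-vector $v_j$ obtained from restricting $\chi_j^P$ to one representative per orbit is an eigenvector of the quotient matrix $Q_P$ at $\lambda$; here $(Q_P)_{i,i'}$ is the number of flags $\mathbf{V}$ opposite a fixed flag $\mathbf{U}$ in orbit $i$ with $P$ entering $\mathbf{V}$ at position $i'$. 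I would compute $Q_P$ directly, parametrising the flags opposite to a fixed reference flag as upper unitriangular matrices and tracking the position where $P$ enters. The involution $i_0 \mapsto 2n+1-i_0$ induced by opposition forces eigenvectors of $Q_P$ to split into symmetric and antisymmetric parts, and after this symmetry reduction, checking $Q_P v_j = \lambda_{\min} v_j$ amounts to a short list of polynomial identities in $q$. The specific step-function shape of $v_j$, with values $0, q^j, -1, 0$ on the four symmetric intervals around the midpoint, is exactly what the antisymmetric block structure predicts.

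For the second stage, let $V = \mathrm{span}\{\chi_j^P\} \subseteq E_{\min}$. To prove equality I would use the bridge between the opposition graphs on maximal and partial flags advertised in the introduction. For each fixed $j$, the function $\chi_j^P$ factors through the projection from $\cC$ onto partial flags of type $\{n-j,\,n,\,n+j\}$, and its pushforward is an eigenvector of the corresponding smaller opposition graph. One then identifies $V_j := \mathrm{span}\{\chi_j^P : P\}$ with the image of a natural $\PGL(2n,q)$-intertwiner from the permutation module on points of $\PG(2n-1,q)$ into $E_{\min}$; via the Iwahori--Hecke dictionary of \cite{GP00} this image is a specific irreducible constituent. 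Running this for $j = 1,\dots,n$ and comparing the accumulated multiplicity against \Cref{thm:multonmodules} should give $\bigoplus_j V_j = E_{\min}$.

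The main obstacle is the second stage, and specifically two intertwined issues. First, one has to match each $V_j$ to the \emph{correct} irreducible constituent of $E_{\min}$ predicted by \Cref{thm:multonmodules}, not merely show $V_j \neq 0$; this requires at least one distinguished matrix coefficient of the relevant Iwahori--Hecke representation to be computed on the flag basis, a calculation that does not seem standard since the existing tableau-based description of these representations is not given in flag coordinates. Second, there are non-trivial linear relations among the $\chi_j^P$ for fixed $j$ (already $\sum_P \chi_j^P = 0$, with further relations coming from higher-dimensional subspaces), so one also has to control the kernel of the intertwiner to pin down $\dim V_j$ exactly. Disentangling these two is precisely the partial-flag/maximal-flag interaction that the paper emphasises as its core technical device.
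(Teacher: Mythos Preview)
Your Stage 1 is essentially what the paper does: define the type of a flag with respect to a point $P$, compute the $2n\times 2n$ quotient matrix $Q$ for the resulting equitable partition (the paper gives an explicit closed form in \Cref{lem:typeAquotientmatrix} via a double-counting argument rather than your unitriangular parametrisation, but this is cosmetic), and verify directly that the step-vectors $v_j=(0^{n-j},(q^j)^j,(-1)^j,0^{n-j})$ are eigenvectors of $Q$ at $\lambda_{\min}$.

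Your Stage 2, however, takes a genuinely different route from the paper, and the obstacles you flag are exactly the ones the paper's method is designed to avoid. The paper does \emph{not} pass to partial flags or invoke the Iwahori--Hecke dictionary to identify each $V_j$ as a specific constituent. Instead it packages the $\chi_j^P$ as the columns of matrices $F_j=\sum_i (v_j)_i T_i$ (where $T_i$ is the type-$i$ incidence matrix between flags and points), and then proves a purely linear-algebraic ``triangular criterion'': there is an injection $g:[n]\to[2n]$ such that $T_{g(j)}^\top F_j$ is a nonzero multiple of the idempotent $E_1=I_{\cP}-|\cP|^{-1}J_{\cP}$ of the (trivial, $2$-class) association scheme on points, while $T_h^\top F_j=0$ for $h<g(j)$. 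This simultaneously gives $\rank(F_j)\ge\rank(E_1)=|\cP|-1$ and, by a triangular elimination, $\dim\bigl(\sum_j\col(F_j)\bigr)\ge n(|\cP|-1)$, which matches the multiplicity from \Cref{thm:multonmodules} and forces equality with a direct sum. The only computations needed are the numbers $t_{ij}^k=|\cC_i^X\cap\cC_j^Y|$ for $X=Y$ and $X\neq Y$, which are elementary (\Cref{lem:typeAbetas}).

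So your proposal is not wrong in spirit, but your Stage 2 leaves open precisely the two problems you name---identifying each $V_j$ module-theoretically in flag coordinates and controlling the $\sum_j V_j$ overlap---and offers no mechanism for resolving them. The paper's triangular criterion sidesteps both: it never asks which irreducible $V_j$ is, only that $T_{g(j)}^\top$ witnesses $\rank(F_j)\ge |\cP|-1$; and the triangularity in $j$ gives directness of the sum without any module-theoretic bookkeeping. If you want to complete your approach, the missing idea is exactly this: multiply $F_j$ on the left by well-chosen $T_h^\top$ and read off the answer in the tiny commutative association scheme on points.
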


\begin{remark}
	We will see later (cf.\ \Cref{thm:multonmodules}) that the multiplicity of the smallest eigenvalue in this case equals $n\frac{q^{2n}-q}{q-1}$ . Moreover, from our computations it will follow that we can extract a basis from this spanning set by considering the set $\{\chi_j^P\}$, where $j \in [n]$ and $P$ ranges over all \textit{but one} one-dimensional subspaces of $\Fq^{2n}$.
\end{remark}

The description in type $B_n$ is more delicate as the eigenspace of the smallest eigenvalue could appear on different irreducible modules of the corresponding Iwahori-Hecke algebras. Nevertheless, the part of the eigenspace that is contained in the `nice' module $M_{[n-1],[1]}$ can be described in a similar way as in type $A_n$. We will denote a polar space of rank $n$, parameter $e$ and defined over $\Fq$ by $\PS(n,e,q)$ (see also the next section) and comes equipped with a polarity, which we denote by $\perp$. We denote by $\rk(U)$ the rank or vector dimension of a subspace $U$ contained in the polar space.

\begin{theorem}
	Denote by $\cC$ the set of maximal flags in $\PS(n,e,q)$, i.e.
	\[\cC = \{(U_0,U_1,\dots,U_{n}) \,\, | \,\, \emptyset = U_0 \subset U_1 \subset \dots \subset U_n, \, \rk(U_i) = i\},\]
	and let $P$ be a point in $\PS(n,e,q)$. For $j \in [m]$ define the function $\chi_j^P:\mathbb{C}^\cC \to \mathbb{C}$ by
	\begin{align*}
	\chi_j^P(U_1,\dots,U_{2n}) = \begin{cases}
	0 &\text{if } P \subseteq U_{n-j}\\
	q^{j+e-1} &\text{if } P \subseteq U_n \setminus U_{n-j} \\
	-1 &\text{if } P \subseteq U_{n-j}^\perp \setminus U_n\\ 
	0 &\text{if } P \notin U_{n-j}^\perp.
	\end{cases}
	\end{align*}
	
	Then the set of functions $\{\chi_j^P\}$, where $j \in [m]$ and $P$ ranges over points, is a spanning set for the part of the eigenspace of the smallest eigenvalue of the opposition graph on maximal flags that is contained in the module $M_{([n-1],[1])}$.
\end{theorem}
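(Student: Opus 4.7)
My plan is to prove the theorem in three stages: show that each $\chi_j^P$ is an eigenvector of the opposition graph for the smallest eigenvalue; verify that the $\chi_j^P$ lie in the irreducible Iwahori--Hecke module $M_{([n-1],[1])}$; and establish the spanning property by a dimension count. The key technical simplification, foreshadowed in the introduction, is to exploit the point stabilizer $G_P \le G$: the function $\chi_j^P$ is constant on $G_P$-orbits in $\cC$, and because the opposition relation is $G$-invariant, the adjacency operator restricts to the $G_P$-invariant subspace as a small quotient matrix whose entries are enumerative quantities of the polar space.

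For the first stage, the orbits of $G_P$ on maximal flags $(U_0,\dots,U_n)$ are parametrized by two statistics: the smallest $a$ with $P \subseteq U_a$ (set $a = n+1$ if no such $a$ exists) and, when $a = n+1$, the largest $b$ with $U_b \subseteq P^\perp$. Each entry of the quotient matrix counts, for two orbit representatives $F$ and $F'$, the number of flags in the orbit of $F'$ opposite to $F$; these counts are standard $q$-binomial enumerations. One then verifies by direct computation that the column vector of values of $\chi_j^P$ on orbits is an eigenvector of the quotient matrix with the smallest eigenvalue from \cite{DBMM22}. This is a routine, if intricate, calculation that can be carried out by hand because the quotient matrix has only $O(n)$ rows.

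The second stage is the most delicate. To show $\chi_j^P \in M_{([n-1],[1])}$, the cleanest approach is to work with the standard description of $M_{([n-1],[1])}$ from \cite[Sec.\ 8.1.9]{GP00} and produce an explicit $H$-equivariant embedding of the abstract module into $\mathbb{C}^\cC$ whose image contains $\chi_j^P$. Concretely, I would examine the action of the Hecke algebra generators $T_{s_i}$ (weighted sums over $s_i$-adjacent flags) on each $\chi_j^P$ and verify that the span of $\chi_1^P, \dots, \chi_m^P$ is stable under the $T_{s_i}$ with matrix coefficients matching those of $M_{([n-1],[1])}$. Each such computation is again local, depending only on the position of $P$ relative to two consecutive subspaces of the flag, so it reduces to a short finite check. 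As an alternative, one could instead show that $\chi_j^P$ is annihilated by the central idempotents corresponding to the other irreducible modules in which the smallest eigenvalue could appear (such as $M_{(\emptyset,[n])}$).

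For the third stage, \Cref{thm:multonmodules} gives the multiplicity of the smallest eigenvalue on $M_{([n-1],[1])}$, which should take the form $m(N - 1)$ where $N$ is the number of points of $\PS(n,e,q)$. The full collection contains $mN$ functions, so one expects (and must verify) that for each fixed $j$ the unique linear relation among $\{\chi_j^P\}_P$ is $\sum_P \chi_j^P = 0$. This relation is a consequence of module membership: the sum is $G$-invariant, hence lies in the trivial isotypic component, which meets $M_{([n-1],[1])}$ trivially. Linear independence of $\{\chi_j^P : j \in [m],\, P \ne P_0\}$ for a fixed point $P_0$ is then checked by producing, for each pair $(j, P)$ with $P \ne P_0$, a flag on which $\chi_j^P$ takes a distinguishing value relative to the other functions, yielding a triangular system. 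I expect the main obstacle throughout to be stage two: locating $\chi_j^P$ inside the correct Hecke module, which requires careful matching against the matrix coefficients of $M_{([n-1],[1])}$ and does not follow mechanically from the orbit analysis that drives the other two stages.
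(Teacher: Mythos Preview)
Your stage 1 matches the paper's approach exactly: the paper computes the $2n\times 2n$ quotient matrix under the point stabiliser (Proposition~5.8) and verifies directly that the $n$ vectors $v_j$ are eigenvectors (Lemma~5.9).

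The genuine gap is in stage 3. Your dimension count assumes the target dimension is $m(N-1)$ with $N=|\cP|$, and that for fixed $j$ the only relation among $\{\chi_j^P\}_P$ is $\sum_P\chi_j^P=0$. This is correct in type $A_n$, where the association scheme on points is trivial and the generic degree of the reflection representation equals $|\cP|-1$. But in type $B_n$ the scheme on points has three classes (identity, collinear, opposite), and the generic degree of $M_{([n-1],[1])}$ is the rank of the idempotent $E_1$, namely
\[
\frac{q^e(q^n-1)(q^{n+e-2}+1)}{(q-1)(q^{e-1}+1)},
\]
which is strictly smaller than $|\cP|-1$. (For $n=2$, $e=1$, $q=2$ one gets $9$ versus $14$.) Hence for fixed $j$ there are many more linear relations among the $\chi_j^P$ than just the sum-to-zero relation, and your triangular ``distinguishing flag'' argument cannot establish the needed independence.

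The paper circumvents both this and your stage~2 at once via the \emph{triangular criterion} (Definition~4.2, Lemmas~4.3--4.4): one computes the products $T_h^\top F_j$ inside the commutative association scheme on points (Lemma~5.10), and shows that for $h=g(j):=n-j+1$ this product is a nonzero multiple of the idempotent $E_1$, while for $h<g(j)$ it vanishes. This simultaneously gives $\rank(F_j)\ge\rank(E_1)$ (the correct generic degree, not $|\cP|-1$) and the independence of the column spaces across $j$, bypassing any direct verification of Hecke generator action. Your proposed stage~2 computation of $T_{s_i}$-matrix coefficients might succeed, but it is neither what the paper does nor necessary once one has the triangular criterion.
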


\noindent\textbf{Structure of the paper.} First we will recall the set-up and the results of \cite{DBMM22} in \Cref{sec:previous} and state which irreducible representations we will investigate closer. Our strategy will be to first determine the multiplicities of the relevant eigenspace in each irreducible module (\Cref{sec:multiplicity}) and then find a spanning set for these spaces. The general method for the latter is outlined in \Cref{sec:spanningstrategy} and applied to the different types in \Cref{sec:specifictypes}. \\

\noindent\textbf{Acknowledgment.} We would like to thank James Parkinson for his help in understanding the connections between the theory of buildings, Iwahori-Hecke algebras and geometries over finite fields, both in this paper and the previous.

\section{Previous results} \label{sec:previous}

In \cite{DBMM22} we started from classical groups of Lie type and their corresponding buildings, as pioneered by Tits \cite{Tits74}. These groups are listed in \Cref{table:classicalgroups}, along with their corresponding geometries and Weyl groups. These Weyl groups are an indexing set for a basis of the Iwahori-Hecke algebras $\cA(G,G/B)$ corresponding to each group $G$, which are the centralizer algebras of $\mathbb{C}[G/B]$, where $B$ is the standard Borel subgroup \cite[Section 8.4]{GP00}. \\

\begin{table}[h!]
	\centering
	\setlength{\tabcolsep}{5mm}
	\def\arraystretch{1.4} 
	\begin{tabular}{l | l | l | l }
		classical group & geometry & Cartan notation & Weyl group \\
		\hline
		$\PGL(n+1,q)$ & projective space & $A_n(q)$ & $A_n$ \\
		$\PGU(2n+1,q^2)$ & hermitian polar space & $^2A_{2n}(q^2)$ & $B_n$ \\
		$\PGU(2n,q^2)$ & hermitian polar space & $^2A_{2n-1}(q^2)$ & $B_n$ \\
		$\PGO(2n+1,q)$ & parabolic quadric & $B_n(q)$ &  $B_n$ \\
		$\PSp(2n,q)$ & symplectic polar space & $C_n(q)$ & $B_n$ \\
		$\PGO^-(2n+2,q)$ & elliptic quadric & $^2D_{n+1}(q^2)$ & $B_{n}$ \\
		$\PGO^+(2n,q)$ & hyperbolic quadric & $D_n(q)$ & $D_n$ 		
	\end{tabular}
	\caption{The projective classical groups.}
	\label{table:classicalgroups}		
\end{table}

The \textbf{opposition graph} of the building can then be defined as the graph with adjacency matrix $A_{w_0}$, where $w_0$ is the longest word of the Weyl group $W$. We record the infinite families of Weyl groups, their Dynkin diagrams and the length of the longest word in \Cref{table:weylgroups} for future reference. In a more geometric way, the opposition graph is the graph whose vertex set are the maximal flags in a spherical building, adjacent whenever they are opposite in a building-theoretical sense. \\

%Equivalently, one can take a classical group $G$ (see \Cref{table:classicalgroups}) and its standard Borel subgroup $B$. The latter can be seen as the stabilizer subgroup of a fixed maximal flag in the corresponding geometry. From this point of view, the vertices of the opposition graph are the cosets $gB$, two cosets $gB$ and $hB$ adjacent whenever $Bg^{-1}hB = Bw_0B$, where $w_0$ is the longest word in the underlying Weyl group $W$ of $G$. See \cite[Section 2]{DBMM22} for a more thorough introduction to this definition. \\

\begin{table}[h!]
	\centering
	\setlength{\tabcolsep}{8mm}
	\def\arraystretch{4} 
	\begin{tabular}{c | c | c}
		Weyl group & Dynkin diagram & $\ell(w_0)$ \\
		\hline
		$A_n$ & \begin{tikzpicture}[very thick, baseline={(N1)},scale=0.8]
		\def\a{1}
		\tikzset{dynkin/.style={circle,draw,minimum size=0.5mm,fill}}
		\path
		(0,0)      node[dynkin] (N1) {} 
		+(90:.5) node{$1$}
		
		++(0:2*\a) node[dynkin] (N2) {} 
		+(90:.5) node{$2$}
		
		++(0:\a)   coordinate (A) ++(0:\a) coordinate (B)
		++(0:\a) node[dynkin] (N3) {}
		+(90:.5) node{${n-1}$}
		
		++(0:2*\a) node[dynkin] (N4) {} 
		+(90:.5) node{$n$};
		
		\draw[dashed] (A)--(B);
		\draw (N1)--(N2)--(A) (B)--(N3)--(N4);
		\end{tikzpicture} & $\dfrac{n(n+1)}{2}$ \\
		
		$B_n$ & \begin{tikzpicture}[very thick,baseline={(N1)},scale=0.8]
		\def\a{1}
		\tikzset{dynkin/.style={circle,draw,minimum size=0.5mm,fill}}
		\path
		(0,0)      node[dynkin] (N1) {} 
		+(90:.5) node{$1$}
		
		++(0:2*\a) node[dynkin] (N2) {} 
		+(90:.5) node{$2$}
		
		++(0:\a)   coordinate (A) ++(0:\a) coordinate (B)
		++(0:\a) node[dynkin] (N3) {}
		+(90:.5) node{${n-1}$}
		
		++(0:2*\a) node[dynkin] (N4) {} 
		+(90:.5) node{$n$};
		
		\draw[dashed] (A)--(B);
		\draw (N1)--(N2)--(A) (B)--(N3);
		\draw[double distance=2pt] (N3)--(N4);
		\end{tikzpicture} & $n^2$ \\
		
		$D_n$ & \begin{tikzpicture}[very thick,baseline={(N1)},scale=0.8]
		\def\a{1}
		\tikzset{dynkin/.style={circle,draw,minimum size=0.5mm,fill}}
		\path
		(0,0)    node[dynkin] (N1) {} 
		+(90:.5) node{$1$}
		
		++(0:2*\a) node[dynkin] (N2) {} 
		+(90:.5) node{$2$}
		
		++(0:\a) coordinate (A) 
		++(0:\a) coordinate (B)
		++(0:\a) node[dynkin] (N3) {}
		+(90:.5) node{$n-2$}
		
		++(20:1.7*\a) node[dynkin] (N4) {} 
		+(90:.5) node{$n-1$}
		
		(N3) ++(-20:1.7*\a) node[dynkin] (N5) {} 
		+(90:.5) node{$n$};
		
		\draw[dashed] (A)--(B);
		\draw (N1)--(N2)--(A) (B)--(N3)--(N4) (N3)--(N5);
		\end{tikzpicture} & $n(n-1)$ \\
	\end{tabular}
	\caption{The Weyl groups $A_n$, $B_n$ and $D_n$.}
	\label{table:weylgroups}
\end{table}

The Iwahori-Hecke algebra corresponding to polar spaces depends on an extra parameter $e$. Its geometrical significance lies in the fact that there are $q^e+1$ rank $n$ spaces through an arbitrary rank $n-1$ space. One can then deduce that this parameter equals $3/2$, $1/2$, $1$, $1$, $2$ and $0$ respectively for the polar spaces listed in \Cref{table:classicalgroups}. As such, we will denote a polar space of rank $n$ with parameter $e$ and defined over $\Fq$ by $\PS(n,e,q)$. \\
%In this last case, we observe that one can view the hyperbolic quadric as a building of type $D_n$ or alternatively as a building of type $B_n$ with $e = 0$. We will return to this later, but it suffices to say that the difference lies in whether one makes the distinction between the two classes of generators or not. 

%Furthermore, this action can most easily be seen by looking at the Dynkin diagram as generators correspond to nodes. In the case of $A_n$ and $D_n$, $n$ odd, the action of $w_0$ corresponds to the unique diagram automorphism of order 2. In the case of $B_n$ and $D_n$, $n$ even, it is the identity.

One can study the spectrum of $A_{w_0}$ by looking at the Iwahori-Hecke algebra as a whole, and deduce its eigenvalues on each module. In this way, we were able to compute the largest and smallest eigenvalue of $A_{w_0}$ which we could then feed into the ratio bound for independent sets \cite{DBMM22}. We can summarize the results obtained there as follows. Recall that irreducible modules in type $A_n$ are indexed by partitions of $n$, whereas in type $B_n$ they are indexed by pairs of partitions, whose total size is $n$.

\begin{theorem}\label{maintheoremprev}
	The smallest eigenvalue $\lambda_{\min}$ of $A_{w_0}$ is known in all cases and its value and the modules on which it appears are indexed by are as follows:
	
	\begin{itemize}
		\item $\lambda_{\min} = -q^{(n^2-1)/2}$ on the module indexed by $[n-1,1]$ for type $A_n$,
		\item $\lambda_{\min} = -q^{(n-1)(n+e-1)}$ on the module indexed by $([n-1],[1])$ for type $B_n$, $n$ even or $e \geq 1$,
		\item $\lambda_{\min} = -q^{n(n-1)}$ on the module indexed by $(\emptyset,[n])$ for type $B_n$, $n$ odd and $e \leq 1$,
		\item $\lambda_{\min} = -q^{n(n-1)}$ on the module indexed by $([1],[n-1])$ for type $B_n$, $n$ even and $e = 0$,
		\item $\lambda_{\min} = -q^{(n-1)^2}$ on the module indexed by $([n-1],[1])$ for type $D_n$.
	\end{itemize}
\end{theorem}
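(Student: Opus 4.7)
The plan is to move the eigenvalue problem into the Iwahori-Hecke algebra $H = \cA(G,G/B)$. Since the adjacency matrix $A_{w_0}$ corresponds (with the normalization fixed in \cite{DBMM22}) to the basis element $T_{w_0}$, and since the permutation module $\C[G/B]$ decomposes as a $(G,H)$-bimodule into $\bigoplus_\lambda V_\lambda\otimes M_\lambda$, the spectrum of $A_{w_0}$ on $\C[G/B]$ is the union, with multiplicity $\dim V_\lambda$, of the spectra of $T_{w_0}$ on the irreducible $H$-modules $M_\lambda$. Finding $\lambda_{\min}$ therefore reduces to computing the minimum eigenvalue of $T_{w_0}$ on each $M_\lambda$ and then minimizing over $\lambda$.

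To perform these per-module computations I would invoke explicit matrix realizations of the irreducible $H$-modules: in type $A_n$ the $q$-Specht modules indexed by partitions of $n+1$ via (the $q$-analogue of) Young's seminormal form, and in types $B_n$, $D_n$ the analogous bipartition constructions, all of which are laid out in \cite[Chapters 5, 10]{GP00}. Fixing a reduced expression $w_0 = s_{i_1}\cdots s_{i_{\ell(w_0)}}$ and multiplying the generator matrices produces $T_{w_0}$ as a concrete matrix on each $M_\lambda$. The fact that $w_0$ is an involution heavily constrains the shape of this matrix, and for the low-dimensional modules appearing in the statement (essentially reflection- or sign-like representations) the resulting matrices are small enough to diagonalize by hand. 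For type $A_n$ one then verifies that among all partitions of $n+1$ the hook $[n-1,1]$ yields the minimum $-q^{(n^2-1)/2}$, which is a short calculation on the reflection representation of $S_{n+1}$.

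For type $B_n$ the calculation is parameter-dependent because the quadratic relation $(T_{s_n}-q^e)(T_{s_n}+1)=0$ at the special generator feeds $e$ into every evaluation of $T_{w_0}$; depending on $e$ and the parity of $n$, the minimum is attained on $([n-1],[1])$, $(\emptyset,[n])$, or $([1],[n-1])$. Type $D_n$ can either be handled directly or reduced to type $B_n$ at $e=0$ via the embedding $W(D_n)\subset W(B_n)$. The main obstacle is the case analysis in type $B_n$: beyond computing $T_{w_0}$ on the candidate modules precisely enough to read off both the sign and the magnitude of the extremal eigenvalue, one must certify that no other bipartition $(\lambda,\mu)$ of $n$ produces a smaller eigenvalue, and one must carefully track the additive shift coming from the $e$-dependence together with the interaction between the two parts of the bipartition. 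This is precisely what drives the crossover between the exponents $q^{(n-1)(n+e-1)}$ and $q^{n(n-1)}$ as $e$ and the parity of $n$ vary, and it is the source of the branching in the statement.
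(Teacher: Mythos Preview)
This theorem is not proved in the present paper: it is stated in Section~2 as a summary of the results obtained in the earlier paper \cite{DBMM22}, so there is no proof here to compare against. Your plan is therefore being assessed on its own merits rather than against a target argument in this paper.

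As a strategy, the first half of your plan is exactly right and is what is done in \cite{DBMM22}: pass to the Iwahori--Hecke algebra, use the bimodule decomposition of $\C[G/B]$, and compute the eigenvalues of $T_{w_0}$ module by module. The second half, however, is where the real content lies, and your plan does not close it. You propose to realize $T_{w_0}$ as an explicit product of generator matrices on each $M_\lambda$ and read off the minimum. This is fine for the few low-dimensional modules named in the statement (and indeed the paper invokes Kilmoyer's explicit description of the reflection representation later, in the proof of \Cref{thm:multonmodules}), but it does not give a mechanism for ruling out \emph{all other} $M_\lambda$. In type $A_n$ there are $p(n+1)$ partitions and in type $B_n$ exponentially many bipartitions; ``one then verifies'' is not a proof for general $n$, and this is precisely the step you flag as the ``main obstacle'' without resolving it.

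What makes the global minimization tractable in \cite{DBMM22} is not module-by-module matrix multiplication but a uniform description of the spectrum of $T_{w_0}$ across all irreducibles: on $M_\lambda$ the eigenvalues have absolute value a fixed power of $q$ determined by combinatorial data of $\lambda$ (essentially via Lusztig's $a$-function / the fake and generic degrees treated in \cite[Chapter~10]{GP00}), and the sign pattern is controlled by the action of $w_0$. With that in hand, locating $\lambda_{\min}$ becomes a short combinatorial optimization over partitions or bipartitions, which is what produces the crossovers in type $B_n$ as $e$ and the parity of $n$ vary. Your plan would benefit from replacing the ``multiply out $T_{w_0}$ on every module'' step with this kind of uniform eigenvalue formula; without it, the certification step remains a genuine gap.
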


\begin{remark}
	Remark that in type $B_n$ for $e = 1$ and $n$ odd, the smallest eigenvalue appears on two different irreducible submodules. This peculiarity for odd rank can also be seen geometrically in the classification of EKR-sets of generators of $Q(2n,q)$ and $W(2n-1,q)$, $q$ even, in \cite{PSV11}. In this case, there are two distinct families of maximal EKR-sets of generators: the trivially intersecting families and one family of generators of an embedded hyperbolic quadric $Q^+(2n-1,q)$.
\end{remark}

\begin{remark}
	For type $B_n$, $e = 0$ and $n$ even, the opposition graph is the disjoint union of two isomorphic graphs, which explains why the module $([1],[n-1])$ appears. Similarly, $([n],\emptyset)$ and $(\emptyset,[n])$ each give rise to the maximal eigenvalue. For $e = 0$ it thus makes sense to focus on only one of the components. This is essentially what happens by looking at the hyperbolic quadric as a building of type $D_n$ (i.e.\ the oriflamme geometry, see \cite[Section 4.5.2]{BVM22}) instead of type $B_n$ with $e=0$.
	
	On the other hand, when $n$ is odd, we will see later on that the opposition graph in type $B_n$, $e = 0$, is the bipartite double of that of type $D_n$. We will therefore be able to transfer spectral information from one to the other.
	
	Recall that on an algebraic level, a similar `halving' from type $B_n$ to type $D_n$ occurs on the level of representations of the corresponding Weyl groups: each pair of representations corresponding to $(\mu,\nu)$ and $(\nu,\mu)$ in type $B_n$ restricts to the same representation in type $D_n$ when $\mu \neq \nu$ \cite[Section 10.4]{GP00}.
\end{remark}

%In summary, we observe that the EKR problem for the hyperbolic quadric $Q^+(2n-1,q)$ is only interesting when $n$ is even and we consider only one component of the opposition graph. As such, \textbf{we do not discuss the cases of type $B_n$, $e = 0$ and type $D_n$, $n$ odd, any further in this paper.} Consequently, whenever we write $D_n$ in the remainder of the paper, it will be tacitly assumed that $n$ is even.

\section{Multiplicity of the smallest eigenvalue}\label{sec:multiplicity}

First we will derive the dimension of the eigenspace of the smallest eigenvalue. This computation mostly builds on foundations that has been established before, and most of the work lies in translating the relevant concepts from the theory of Iwahori-Hecke algebras. We again refer to \cite{GP00} for a reference of the following facts.

The Iwahori-Hecke algebra $\cA(G,G/B)$ (denoted as $\mathrm{End}_G(\mathbb{C}[G/B])$ in \cite[Section 8.4]{GP00}) is a complex semi-simple algebra. This means that if we consider the action on $G/B$, the corresponding permutation representation $\rho$ splits into irreducible representations as
\begin{align}
	\rho = \bigoplus_{i \in I}m_i \rho_i, \label{eq:decomposition}
\end{align}

where $\{\rho_i\}_{i \in I}$ are the irreducible representations of $\cA(G,G/B)$ and $m_i$ is the corresponding multiplicity. The indexing set $I$ depends on the type of the underlying Weyl group:

\begin{itemize}
	
	\item type $A_n$: $I$ is the set of partitions of $[n+1]$, 
	\item type $B_n$: $I$ is pairs of partitions $(\mu,\nu)$ such that $|\mu| + |\nu| = n$,
	\item type $D_n$: representations are restrictions of those of $B_n$. Either $\mu \neq \nu$ and the restrictions of $\rho_{(\mu,\nu)}$ and $\rho_{(\nu,\mu)}$ coincide or the restriction of $\rho(\mu,\mu)$ splits into irreducibles as $\rho_{(\mu,+)}+\rho_{(\mu,-)}$.
\end{itemize}	 

Viewing the elements of $\cA(G,G/B)$ not as abstract algebra elements, but as square matrices of size $|G/B|$ over $\C$ -- which one usually does in the theory of association schemes -- the previous boils down to block-diagonalizing the algebra. From this point of view, we can rewrite \eqref{eq:decomposition} as an isomorphism of matrix algebras over $\C$:
\begin{align}
	\cA(G,G/B) \cong \bigoplus_{i \in I}M_{n_i}(\C) \otimes I_{m_i}. \label{eq:matrixdecomposition}
\end{align}

In this case, the representation $\rho_i$ in \eqref{eq:decomposition} is the projection onto the block $M_{n_i}(\C)$ and hence $n_i = \rho_i(1)$. \\

The largest eigenvalue is only attained on the trivial representation and has multiplicity one. This also follows from the Perron-Frobenius theorem in algebraic graph theory. The smallest eigenvalue $\lambda_{\min}$ on the other hand could be attained by several representations as we see in \Cref{maintheoremprev}. In order to determine its multiplicity, a quick look at \eqref{eq:matrixdecomposition} reveals that we can do the following:
\begin{enumerate}
	\item compute the multiplicity $\rm mult_i$ of $\lambda_{\min}$ on each of the blocks $\rho_i(A_{w_0})$,
	\item find the value $m_i$, 
	\item compute the total multiplicity as $\sum_{i \in I} {\rm mult_i}\cdot m_i$.
\end{enumerate}

For both of the first two points, we can make extensive use of existing literature: the representations on which $\lambda_{\min}$ appears, are listed in \Cref{maintheoremprev}. There are at most two representations per type and all of them have been studied before. The value $m_i$ is known in the literature as the \textit{generic degree} and is known for the classical types \cite[Section 10.5]{GP00}.

\begin{theorem}\label{thm:multonmodules}
	Let $\lambda_i$ be the smallest eigenvalue attained by $A_{w_0}$ on the irreducible module $M$ indexed by $i$ and ${\rm mult}_i$ its multiplicity on $M$. For each module $M$ of interest mentioned in \Cref{maintheoremprev}, we list its index, smallest eigenvalue $\lambda_i$, its multiplicity ${\rm mult}_i$, and the generic degree of $M$:
	
	\begin{table}[h!]
		\setlength{\tabcolsep}{3mm}
		\def\arraystretch{2.5} 
		\begin{center}
		\begin{tabular}{c | c | c | c | c}
		type & index $i$ &$ \lambda_i$ &  ${\rm mult}_i$ & generic degree \\ \hline
		$A_n$ & $[n,1]$ & $-q^{(n^2-1)/2}$ & $\lceil n/2 \rceil$ & $\dfrac{q^{n+1}-q}{q-1}$ \\
		$B_n$ & $([n-1],[1])$ & $-q^{(n-1)(n+e-1)}$ & $n$ & $\dfrac{q^e(q^n-1)(q^{n+e-2}+1)}{(q-1)(q^{e-1}+1)}$ \\
		$B_n$ & $(\emptyset,[n])$ & $(-1)^nq^{n(n-1)}$ & 1 & $q^e\dfrac{(q^{n+e-1}+1)(q^{n+e-2}+1)\cdots(q^{e+1}+1)}{(q^{n-e-1}+1)(q^{n-e-2}+1)\cdots(q^{-e+1}+1)}$ \\
		$D_n$ & $([n-1],[1])$ & $-q^{(n-1)^2}$ & $n$ & $\dfrac{(q^{n+1}-q)(q^{n-2}+1)}{(q-1)(q+1)}$
	\end{tabular}
	\end{center}
	\end{table}
\end{theorem}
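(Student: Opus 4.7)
The plan is to follow the three-step outline stated just before the theorem. By the matrix-algebra decomposition in \eqref{eq:matrixdecomposition}, the multiplicity of $\lambda_{\min}$ as an eigenvalue of $A_{w_0}$ on $\C[G/B]$ equals $\sum_i \mathrm{mult}_i \cdot m_i$, with the sum running over the (at most two) irreducible modules listed in \Cref{maintheoremprev}, where $m_i$ is the generic degree of $\rho_i$ and $\mathrm{mult}_i$ is the multiplicity of $\lambda_{\min}$ on the block $\rho_i(A_{w_0})$. So the proof splits into (a) reading off the $m_i$, and (b) determining the $\mathrm{mult}_i$ on each relevant block.

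For part (a), the generic degrees of the irreducible representations of the Iwahori-Hecke algebras of classical type are given by explicit product formulae due to Hoefsmit, tabulated in \cite[Section~10.5]{GP00}; the type $D_n$ degrees follow from the type $B_n$ ones by the restriction rule recalled in \Cref{sec:previous}. Specializing these formulae to the (bi)partitions $[n,1]$, $([n-1],[1])$, and $(\emptyset,[n])$ is a mechanical simplification, best packaged as a short auxiliary lemma. It contains no real obstruction.

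For part (b) I would treat the four blocks separately. The module $(\emptyset,[n])$ in type $B_n$ is one-dimensional (the $\epsilon$-deformation of the sign character), so $\rho_i(A_{w_0})$ is a single scalar equal to the character value of $T_{w_0}$; evaluating this character recovers both the sign $(-1)^n$ and the power $q^{n(n-1)}$, and $\mathrm{mult}_i=1$ is automatic. For the reflection-type modules $([n-1],[1])$ in types $B_n$ and $D_n$, the module has dimension $n$ and the claimed multiplicity is also $n$, so it suffices to prove the stronger statement that $\rho_i(A_{w_0})$ is scalar. I would do so in Hoefsmit's seminormal realisation: at the Weyl group level the longest element in type $B_n$ acts as $-\mathrm{Id}$ on the reflection representation, and the same holds for type $D_n$ up to the restriction rule; the deformation formulae for $T_{w_0}$ on the standard bitableaux of shape $([n-1],[1])$ then force $\rho_i(T_{w_0})$ to be a common scalar on each basis vector, and a direct evaluation of that scalar returns $-q^{(n-1)(n+e-1)}$ (respectively $-q^{(n-1)^2}$).

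The remaining case, $[n,1]$ in type $A_n$, is the delicate one and, to my mind, the main obstacle. Here the module has dimension $n$ but the claimed multiplicity is only $\lceil n/2\rceil$, so $\rho_{[n,1]}(A_{w_0})$ must be genuinely non-scalar. Classically, on the reflection representation of $S_{n+1}$ the longest element $w_0:i\mapsto n+2-i$ is an orthogonal involution whose $(-1)$-eigenspace has dimension exactly $\lceil n/2\rceil$. My plan is to work in Hoefsmit's seminormal basis for $\rho_{[n,1]}$ indexed by the $n$ standard tableaux of shape $[n,1]$, observe that conjugation by $w_0$ acts on these tableaux via a natural involution (complementing the second-row entry with respect to the middle of $[n+1]$), and use this symmetry to block-diagonalise $\rho_{[n,1]}(T_{w_0})$ into $2\times 2$ blocks plus a single $1\times 1$ block when $n$ is even. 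A direct diagonalisation of each $2\times 2$ block then shows that the two eigenvalues of that block are $\pm q^{(n^2-1)/2}$, so exactly one $-q^{(n^2-1)/2}$ is contributed per orbit, giving multiplicity $\lceil n/2\rceil$. The technical point I expect to have to work at is verifying that the seminormal matrix entries interact cleanly with the combinatorial involution; I anticipate this will reduce to a short parity argument on the contents of the boxes together with the standard axial-distance identities.
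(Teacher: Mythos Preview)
Your outline is sound and would lead to a correct proof, but it takes a substantially longer route than the paper. The paper does not compute $\rho_i(A_{w_0})$ from scratch in any seminormal model; instead it simply cites Kilmoyer's thesis \cite[Proposition~28]{Kilmoyerthesis}, which already records the image of $T_{w_0}$ under the reflection representation of each classical Iwahori--Hecke algebra. From that reference one reads off directly that $\rho_{([n-1],[1])}(A_{w_0})=-q^{(n-1)(n+e-1)}I_n$ in type $B_n$ and $\rho_{([n-1],[1])}(A_{w_0})=-q^{(n-1)^2}I_n$ in type $D_n$, so $\mathrm{mult}_i=n$ is immediate. For type $A_n$, Kilmoyer's formula gives $\rho_{[n,1]}(A_{w_0})$ as $-q^{(n^2-1)/2}$ times the $n\times n$ anti-diagonal exchange matrix; the eigenvalues of such a matrix are $\pm q^{(n^2-1)/2}$ with multiplicities $\lceil n/2\rceil$ and $\lfloor n/2\rfloor$, and no further work is needed. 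The generic degrees are likewise quoted, from \cite[Example~10.5.8 and Theorem~10.5.3]{GP00} and \cite{CIK71}. So what you regard as ``the main obstacle'' dissolves once one is aware of Kilmoyer's explicit matrix: your tableau involution and $2\times 2$ block analysis would essentially rederive the anti-diagonal form, which is fine but unnecessary.

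Two small remarks on your plan as written. First, a parity slip: with your involution $k\mapsto n+3-k$ on the second-row entry of a standard $[n,1]$-tableau, the fixed point (and hence the lone $1\times 1$ block) occurs when $n$ is odd, not when $n$ is even; this is consistent with $\lceil n/2\rceil=(n+1)/2$ in that case. Second, your argument for $D_n$ leans on ``$w_0$ acts as $-\mathrm{Id}$ on the reflection representation at the Weyl group level''. Be careful here: for $W(D_n)$ with $n$ odd the longest element is \emph{not} $-\mathrm{Id}$ on the natural $\mathbb{R}^n$, so you cannot simply transport the $B_n$ statement via the restriction rule without checking which $n$-dimensional module you are actually in. The paper sidesteps this by citing Kilmoyer's computation directly rather than arguing via the $q\to 1$ limit.
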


\begin{proof}
	Explicit expressions are known for $\rho(A_{w_0})$ for each of the representations $\rho$ whose corresponding module is listed above. For the $n$-dimensional reflection representation of a Weyl group, which is the case for all but the third row, this can be found in Kilmoyer's thesis \cite[Proposition 28]{Kilmoyerthesis}. From this result, we can deduce the following explicit expressions for the image of $A_{w_0}$ under the reflection representation: \\
	
	\textbf{\fbox{type $A_n$}} \,\,
	The representation $\rho_{[n,1]}$ is $n$-dimensional and we have
	\[\rho_{[n,1]}(A_{w_0}) = \begin{pmatrix}
		0  & \cdots & 0 & -q^{(n^2-1)/2} \\
		0  & \cdots & -q^{(n^2-1)/2} & 0 \\
		\vdots  & \reflectbox{$\ddots$} & \vdots & \vdots \\
		-q^{(n^2-1)/2} & 0 & \cdots & 0 
	\end{pmatrix}\]
	So ${\rm mult}_{[n,1]}$ is $n/2$ or $(n+1)/2$, depending on whether $n$ is even or odd respectively. \\
	
	\textbf{\fbox{type $B_n$}} \,\, $\rho_{([n-1],[1])}(A_{w_0}) = -q^{(n-1)(n+e-1)}I_{n}$ and hence ${{\rm mult}_{([n-1],[1])}} = n$. The representation $\rho_{(\emptyset,[n])}$ is one-dimensional (this follows for example from \cite[Theorem 10.1.5]{GP00}) and thus ${\rm mult}_{(\emptyset,[n])} = 1$. \\
	
	\fbox{\textbf{type $D_n$}} \,\, $\rho_{([n-1],[1])}(A_{w_0}) = -q^{(n-1)^2}I_n$ and hence ${{\rm mult}_{([n-1],[1])}} = n$. \\
	
	The last column can be immediately deduced from existing results. The generic degrees of the reflection representation in each type can be found in \cite[Example 10.5.8]{GP00}. For the module $M_{(\emptyset,[n])}$ in type $B_n$, we can use \cite[Theorem 10.5.3]{GP00} or refer to \cite[values for $d_{\sigma_1}$ on p114-115]{CIK71} for simplified expressions for each value of $e$ as given in \Cref{table:mults}. 
\end{proof}

	One can now combine \Cref{maintheoremprev} and \Cref{thm:multonmodules} as indicated before to obtain the multiplicity of $\lambda_{\min}$ for each of the types. Since there are eight cases to consider, we will not list explicit multiplicities here, but refer to the detailed \Cref{table:mults} in the appendix.
	
	\begin{remark}
		From the values of the generic degrees, one can already suspect a relation to graphs defined on partial flags. To be precise: the graph on the points of $\PG(n,q)$, two points adjacent whenever they are distinct, is the complete graph and hence the multiplicity of the smallest eigenvalue is exactly the generic degree listed in the first row. In type $B_n$, the opposition graph on points of a polar space $\PS(n,e,q)$ has the generic degree listed in the second row as the multiplicity for its smallest eigenvalue, see \cite[Theorem 2.2.12]{BVM22}. The multiplicity of the smallest eigenvalue of the opposition graph on generators of $\PS(n,e,q)$ equals the generic degree listed in the third row, see \cite[Theorem 9.4.3]{BCN89}.
	\end{remark}

\section{A spanning set for the eigenspace} \label{sec:spanningstrategy}

We will outline the general idea to construct eigenvectors in a few steps and execute the detailed computations for each type separately later on. \\ 

\noindent \textbf{Notation and terminology.} We will use the notation $[n]:=\{1,\dots,n\}$ throughout. It will be clear from context whether $[n]$ denotes a set, or a partition. For example $i \in [n]$ will always refer to the former as we do not consider elements of a partition. The identity matrix and the all-one matrix indexed by a set $\Omega$ will be denoted by $I_\Omega$ and $J_\Omega$ respectively.

The set of maximal flags in a spherical building will be denoted by $\cC$. For $c \in \cC$ we will denote by $\opp(c)$ the flags that are opposite to it. Equivalently, they are the neighbours of $c$ in the opposition graph.
We will at times alternate between viewing $A_{w_0}$ as a matrix whose rows and columns are indexed by $\cC$ on the one hand, and as an operator on the space of functions $L^2(G/B)$ on the other hand, recalling that $\cC$ is in bijection with $G/B$. Consequently, we would in such instances talk about `eigenvectors', when in fact we are describing eigenfunctions. 
 \\

Given a parabolic subgroup $P_J$ of $G$, for some choice of $J$, we will construct eigenvectors left invariant by this group. In other words, the eigenvectors will be constant on orbits of $P_J$ on $G/B$. The specific choice of $J$ will be motivated both by making the computations as simple as possible and by the automorphism groups of known maximal EKR-sets of flags given in \cite{DBMM22}. 

The number of orbits of $P_J$ on the set of maximal flags equals $[W:W_J]$ as every (right) coset representative $w$ leads to a relation $P_JwB$ between the element fixed by $P_J$ and maximal flags. In order to have a compact description of the eigenvectors, we would therefore like to minimize $[W:W_J]$ for a given Weyl group $W$. For example, one can check that in type $A_n$, choosing $J = [n] \setminus \{k\}$ we find $[W:W_J] = {n+1 \choose k}$ and so $J = [n] \setminus \{1\}$ or dually $J = [n] \setminus \{n\}$ will give the least number of orbits. This corresponds geometrically to considering eigenvectors invariant under the stabilizer of a fixed point or hyperplane respectively. Both of these also happen to be the automorphism group of a maximal EKR-set of maximal flags in the case of type $A_n$, $n$ odd \cite{DBMM22}.

Furthermore, the choice of $J$ (in any type) with $|J|=n-1$ has the added benefit that the association scheme on $G/P_J$ is always commutative in type $A_n$ and $B_n$ and for our choice of $J$ in type $D_n$. We refer the interested reader to \cite{APVM13} for a complete study of when this algebra is commutative. The commutativity will allow us to show that the obtained set of vectors is spanning the eigenspace of $\lambda_{\min}$ in a neat way.

\subsection{Step 1: describe $P_J$-orbits on $G/B$} 

So fix a choice of $J$ and denote the elements $G/P_J$ by $\cE$. We will call them elements, since we will only deal with the case $|J|=n-1$ and hence $\cE$ are really elements of the incidence geometries and not partial flags. For example if $J = [n] \setminus \{1\}$, then $\cE$ is the set of points of the incidence geometry.

As discussed before, if we denote by $\{w_1,\dots,w_\ell\}$ the set of right coset representatives of $W_J$ in $W$, then every maximal flag will be in an orbit $P_Jw_iB$ for some $i \in [\ell]$. 

\begin{defin} \label{def:typegeneral}
	Let $X \in \cE$. The \textbf{type} of a maximal flag $gB$\, w.r.t.\ $X$ is $i$ if and only if $P_JgB = P_Jw_iB$.	
\end{defin}

%Recall that $\cC$ denotes the set of maximal flags. Remark that maximal flags are called chambers in the terminology of buildings, which explains the notation.

\begin{notation}
	For any $i \in [\ell]$ we introduce the following notation:
	\begin{itemize}
		\item $\cC_i^X$ is the set of maximal flags which are of type $i$ w.r.t.\ $X$;
		\item $T_i$ is the $|\cC|\times|\cE|$ adjacency matrix of the `type $i$'-relation. That is, $T_i$ has rows and columns indexed by maximal flags and elements respectively, s.t. $T_i(c,X) = 1$ if and only if $c \in \cC_i^X$ and $0$ otherwise.
	\end{itemize}
\end{notation}

Remark that if we choose the coset representatives to have minimum length, then 
\begin{align}\label{eq:sizeofCiX}
	|\cC_i^X| = q^{\ell(w_i)}\sum_{w \in W_J}q^{\ell(w)},
\end{align}
as $|BwB/B| = q^{\ell(w)}$ in general. \\

\begin{exa}\label{exa:A3example}
	Consider the spherical building of type $A_3$, i.e.\ the geometry of maximal flags in $\PG(3,q)$, with associated Weyl group $W = \Sym(4)$. This group is generated by $\{s_1,s_2,s_3\}$, where $s_i = (i,i+1)$ are the usual adjacent transpositions. If we choose $J = \{2,3\}$ then $W_J \cong \Sym(3)$ and hence $[W:W_J] = 4$. The right coset representatives of $W_J$ in $W$ are $\{e,s_1,s_1s_2,s_1s_2s_3\}$, where $e$ denotes the identity element. The cosets $gP_J$ correspond to points in $\PG(3,q)$ and the type of a flag $hB$ depends on $P_Jg^{-1}hB$. Geometrically these four possibilities depend on the mutual position of the point $X = gP_J$ and the point-line-plane flag $(Y,\ell,\pi) = hB$ in $\PG(3,q)$:
	\begin{center}
		{\renewcommand{\arraystretch}{1.2}
		\begin{tabular}{l | l}
			geometry & $P_Jg^{-1}hB$ \\	\hline		
			$X = Y$ & $P_JeB = P_J$ \\
			$X \in \ell \setminus \{Y\}$ & $P_Js_1B$ \\
			$X \in \pi\setminus \ell$ & $P_Js_1s_2B$ \\
			$X \notin \pi$ & $P_Js_1s_2s_3B$
	\end{tabular}}
	\end{center}
	This completes the description of the sets $\cC_i^X$, and we leave it up to the reader to verify geometrically that \eqref{eq:sizeofCiX} determines their size.
\end{exa}

\subsection{Step 2: define eigenvectors via the quotient matrix}\label{subsection:step2} 

We will now construct our eigenvectors by finding them as the column vectors of suitable linear combinations of $T_i$. So let $F_j = \sum_{i = 1}^\ell f_{ij}T_i$, then the columns of $F_j$ are eigenvectors of $A_{w_0}$ with eigenvalue $\lambda_{\min}$ if and only if
\begin{align}\label{eq:findingeigenvectorsgeneral}
	A_{w_0}F_j = \lambda_{\min}F_j &\Leftrightarrow \sum_{i = 1}^\ell f_{ij}A_{w_0}T_i = \lambda_{\min}\sum_{i = 1}^\ell f_{ij}T_i.
\end{align}

We must therefore compute $A_{w_0}T_i$ for all $i \in [\ell]$. Observe that the entry $(A_{w_0}T_i)(c,X)$ equals $|\cC_i^X \cap \opp(c)|$. Since $\cC_i^X$ is an orbit of $P_J$, this number will depend on the orbit of $c$, and not on $c$ itself. In other words, given $i,j \in [\ell]$, we need to compute $|\cC_j^X \cap \opp(c)|$ for a given $c \in \cC_i^X$. This is precisely the content of quotient matrices. 

First we introduce the following notation.

\begin{notation}
	Let $\Omega$ be a finite set. If $S \subset \Omega$, then $\textbf{1}_S$ denotes the characteristic (column) vector of $S$ in $\C^\Omega$. For example, $\textbf{1}_\Omega$ denotes the all-one vector. Typically, $\Omega$ will be clear from context.
\end{notation}

We will consider the actions of a maximal parabolic subgroup $P_J$ and its conjugates on the opposition graph and consider the quotient matrix under this action.

\begin{defin}\label{def:quotientmatrix}
	Let $\Gamma$ be a graph and denote the orbits of $H \leq \mathrm{Aut}(\Gamma)$ on $V(\Gamma)$ by $\{O_1,\dots,O_k\}$. The \textbf{characteristic matrix} $P$ is the $|V(\Gamma)|\times k$ matrix whose columns are $\{\mathbf{1}_{O_1},\dots,\mathbf{1}_{O_k}\}$. If $A$ denotes the adjacency matrix of $\Gamma$, then the \textbf{quotient matrix} $Q$ is defined by the equation $AP = PQ$.
\end{defin}

We remark that typically, quotient matrices are defined in terms of equitable partitions, which generalize orbit partitions as in \Cref{def:quotientmatrix} in a combinatorial way. We refer the interested reader to \cite[Section 9.3]{GR01}. \\

In our case, the characteristic matrix $P$ depends on the choice of $X \in \cE$, but the quotient matrix $Q$ is the same for all $X \in \cE$ as $G$ acts transitively on $G/P_J$. Moreover, $Q_{ij} = |\cC_j^X \cap \opp(c)|$ for arbitrary $c \in \cC_i^X$, as wanted. When we have to compute the quotient matrix, the equality 
\begin{align}\label{eq:quotientmatrixdoublecounting}
	|\cC_i^X|Q_{ij} = |\cC_j^X|Q_{ji},
\end{align}
obtained through double counting, can essentially cut the work in half.

Furthermore, we find from our previous discussion that
\begin{align*}
	A_{w_0}T_i = \sum_{k=1}^{\ell}Q_{ki}T_k
\end{align*}
and hence combining this with \eqref{eq:findingeigenvectorsgeneral} we obtain
\begin{align*}
	\sum_{i = 1}^\ell f_{ij}\sum_{k=1}^{\ell}Q_{ki}T_k = \lambda_{\min}\sum_{i = 1}^\ell f_{ij}T_i.
\end{align*}

Since the $\{T_i\}_{i \in [\ell]}$ are a set of $\{0,1\}$-matrices that sum to the all-one matrix, they are linearly independent and hence we conclude $F_j$ has eigenvectors for $\lambda_{\min}$ as its columns if and only if
\begin{align*}
	\sum_{i = 1}^\ell Q_{ki}f_{ij}=\lambda_{\min}f_{kj}.
\end{align*} 

In other words, if and only if $(f_{1j},\dots,f_{\ell j})^\top$ is a right eigenvector of $Q$. This is a condensed version of `lifting' eigenvectors from a quotient matrix as described in \cite[p197-198]{GR01}. The point is that by definition, the quotient matrix depends a priori on $P_J$ or equivalently, the choice of $X \in \cE$. However, as mentioned before, the resulting matrix $Q$ will be the same for all choices. The eigenvectors are hence the same, but they will lift to different vectors in $\C^{\cC}$. By defining the type matrices $\{T_i\}_{i \in [\ell]}$, we can essentially record these lifts all at once in $F_j$.

\subsection{Step 3: prove that the eigenvectors span the space} 

For each of the four modules listed in \Cref{thm:multonmodules}, we will make a specific choice of $J$. It turns out that in each case the multiplicity $m$ of $\lambda_{\min}$ as an eigenvalue of $Q$, will equal $\mathrm{mult}_i$. So given $m$ linearly independent eigenvectors of $Q$ and hence $m$ matrices $\{F_1,\dots,F_m\}$ defined as above, we will show that
\begin{enumerate}[(a)]
	\item $\rank(F_j)$ is at least the generic degree for all $i \in [m]$, and
	\item the dimension of $\col(F_1)+\dots+\col(F_m)$ is at least $m$ times the generic degree.
\end{enumerate} 
This shows that the column spaces span the part of the eigenspace contained in the relevant module. Unfortunately, except for type $A_n$, it does not seem feasible to extract an explicit basis from this set. \\

We will achieve both goals simultaneously. In particular, we will compute $T_i^\top F_j$ for $i \in [\ell], j \in [m]$. Since $F_j$ is a linear combination of the $T_i$, this boils down to computing $T_i^\top T_j$ for $i,j \in [\ell]$ for which we can observe that $(T_i^\top T_j)(X,Y) = |\cC_i^X \cap \cC_j^Y|$. Once again, this number will not depend on the specific pair $(X,Y)$ but on the orbit of $G$ on pairs of elements. In other words, the matrix $T_i^\top T_j$ will be an element of the association scheme on elements. \\

For example, in type $A_n$ for the choice $J = n \setminus \{1\}$, there are only two orbits ($X = Y$ and $X \neq Y$) and hence $T_i^\top T_j$ will be a linear combination of the $I_\cE$ and $J_\cE$. For our choices of $J$, the association scheme on elements $\cA(G,G/P_J)$ will be commutative and we can denote by $\{A_0=I_{\cE},A_1,\dots,A_d\}$ its basis $\{0,1\}$-matrices, where $d+1$ is the number of orbits of $G$ on pairs of elements. It follows that
\begin{align}
	T_i^\top T_j = \sum_{k=0}^{d}t^k_{ij}A_k, \label{eq:expandproductsofTinassocscheme}
\end{align}
where $t^k_{ij} = |\cC_i^X \cap \cC_j^Y|$ for any two elements $(X,Y)$ in relation $k$. \\

Now denote the basis of idempotents by $\{E_0 = \frac{1}{|\cE|}J,E_1,\dots,E_d\}$. These matrices satisfy $E_iE_j = \delta_{ij}E_i$ and $A_kE_r = p_k(r)E_r$, where $p_k(r)$ is the eigenvalue of $A_k$ on $E_r$. In the theory of commutative association schemes, these eigenvalues are collected in the $P$-matrix of the scheme, which is defined by $P_{r,k} = p_k(r)$. 

Recall that given a module $M$ from \Cref{thm:multonmodules}, we chose $J$ accordingly. The choice will be made such that the module is also present as an idempotent in the association scheme on elements. To be precise, in the decomposition of $\cA(G,G/P_J)$, similarly as in \eqref{eq:matrixdecomposition}, we find $M$ as an irreducible component. Since $\cA(G,G/P_J)$ is commutative, this is equivalent to saying that $M$ is isomorphic as a $\cA(G,G/P_J)$-module to a primitive idempotent, say $E_s$. \\

This can also be seen in a different way. Retaking the example of type $A_n$ and $J = [n] \setminus \{1\}$, we will see later that 
\[\mathrm{ind}_{P_J}^G(1_{P_J}) = \chi_{[n+1]} + \chi_{[n,1]}.\]
This implies that the block-diagonalization is
\[\cA(G,G/P_J) \cong \C \oplus \C I_{|\cE|-1}, \]
as the respective generic degrees are $1$ and $|\cE|-1 = \frac{q^n-q}{q-1}$. In this case, the module $M$ corresponds to the second term in the decomposition and equals the idempotent $E_1 = I_\cE - \frac{1}{|\cE|}J_\cE$. \\

We are now in the position to show our method for proving (a) and (b) simultaneously. Throughout, $E_s$ denotes the special idempotent isomorphic to $M$, whose rank equals the generic degree of $M$.

\begin{defin}\label{def:trianglecriterion}
	Given matrices $\{F_1,\dots,F_m\}$ as defined before, we say that $g:[m]\rightarrow[\ell]$ satisfies the \textbf{triangular criterion} if and only if
	
	\begin{align}\label{eq:triangularcriterion}
		T_h^\top F_j = \begin{cases}
			\alpha_jE_s & \text{if } h = g(j) \\
			0 & \text{if } h < g(j),
		\end{cases}
	\end{align}
	for some non-zero constants $\alpha_j$, $j \in [m]$.
\end{defin}

This criterion resembles a similar one in algebraic combinatorics from which it borrows the name. In that setting, it is typically used to prove the independence of a set of polynomials, see for example \cite[Proposition 2.5]{BF2022}.

\begin{lemma}\label{lem:triangularcriterion}
	If there exists $g:[m]\rightarrow[\ell]$ satisfying the triangular criterion then (a) and (b) hold.
\end{lemma}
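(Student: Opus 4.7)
The plan is to deduce both (a) and (b) simultaneously from a single block-triangular argument obtained by stacking the operators $T_{g(j)}^\top$ on top of the horizontal concatenation $[F_1 \mid \cdots \mid F_m]$.

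Part (a) will be essentially immediate from the criterion. Since $T_{g(j)}^\top F_j = \alpha_j E_s$ with $\alpha_j \neq 0$, one has $\rank(F_j) \ge \rank(T_{g(j)}^\top F_j) = \rank(E_s)$, and the latter equals the generic degree of $M$ by the isomorphism $E_s \cong M$ of $\cA(G,G/P_J)$-modules established earlier in the section.

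For part (b) I would first observe that the criterion forces $g$ to be injective (a non-injective $g$ would make distinct $F_j$'s satisfy $T_{g(j)}^\top(\alpha_{j'}F_j - \alpha_j F_{j'}) = 0$ in a way that, together with the nonvanishing $\alpha_j$, is incompatible with the criterion surviving at the next threshold), and then reindex so that $g(1) < g(2) < \cdots < g(m)$. Next I would form the block matrix
\[
M \;:=\; \begin{pmatrix} T_{g(1)}^\top \\ \vdots \\ T_{g(m)}^\top \end{pmatrix} [F_1 \mid \cdots \mid F_m],
\]
whose $(h,j)$-block is $T_{g(h)}^\top F_j$. By the triangular criterion this block equals $\alpha_j E_s$ when $h = j$ and vanishes when $h < j$ (since then $g(h) < g(j)$), so $M$ is block lower triangular with diagonal blocks $\alpha_j E_s$, each of rank equal to the generic degree of $M$.

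The final step uses the standard fact that the rank of a block lower triangular matrix is at least the sum of the ranks of its diagonal blocks, which yields $\rank(M) \ge m \cdot (\text{generic degree of } M)$. Combining with $\rank(M) \le \rank([F_1 \mid \cdots \mid F_m]) = \dim(\col(F_1)+\cdots+\col(F_m))$ gives (b). The content of the lemma is essentially conceptual: the triangular criterion has been engineered so that this stacking produces a block lower triangular matrix with diagonal blocks isomorphic to the target idempotent, after which the proof collapses in a few lines. The genuine obstacle lies not in this lemma but in the subsequent task — verifying the triangular criterion for explicit choices of $F_j$ in each type in \Cref{sec:specifictypes}, which is where the detailed combinatorial and algebraic computations with the quotient matrices will enter.
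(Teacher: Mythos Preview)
Your proof is correct and is essentially the paper's argument repackaged as a block-matrix rank inequality. The paper proves (b) by choosing subspaces $S_j \le \col(F_j)$ of dimension $\rank(E_s)$ on which $v \mapsto T_{g(j)}^\top v$ is injective, and then showing that $\sum_j v_j = 0$ with $v_j \in S_j$ forces every $v_j = 0$: one applies $T_{g(j_1)}^\top$ for $g(j_1)$ minimal, kills all terms but $v_{j_1}$, concludes $v_{j_1}=0$, and iterates. Your block lower-triangular formulation is the matrix incarnation of exactly this elimination; the ``standard fact'' you invoke about ranks of block-triangular matrices is proved precisely by that inductive peeling. So the two routes differ only in presentation, and yours is arguably cleaner.

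One caveat, shared with the paper: your justification that $g$ must be injective is not actually a proof. The criterion as stated in \Cref{def:trianglecriterion} imposes independent conditions for each $j$, and nothing prevents $g(j)=g(j')$ formally --- the linear combination $\alpha_{j'}F_j - \alpha_j F_{j'}$ you form is annihilated by all $T_h^\top$ with $h \le g(j)$, but this contradicts nothing in the criterion, which is silent about $h>g(j)$. The paper makes the same unproved assertion (``$g$ is necessarily injective by \eqref{eq:triangularcriterion}''). Both arguments genuinely need injectivity to separate the blocks, so strictly it should be an explicit hypothesis; in all the applications in \Cref{sec:specifictypes} one takes $g(j)=n-j+1$ (or $m-j+1$), which is visibly injective, so no harm is done.
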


\begin{proof}
	By definition we have $\rank(F_j) \geq \rank(T_{g(j)}F_j) = \rank(E_s)$. Since $\rank(E_s)$ is the generic degree of $M$, this proves (a).
	
	In order to prove (b), we observe that $T_{g(j)}^\top F_j = \alpha_jE_s$ implies that the image of the map $f_j:\col(F_j) \rightarrow \C^\cE, v \mapsto T_{g(j)}^\top v$ has dimension at least $\rank(E_s)$. We can then restrict $f_j$ to a subspace $S_j \leq \col(F_j)$ whose dimension equals $\rank(E_s)$ such that $f_j|_{S_j}$ is injective. Since $\dim(S_1+\cdots+S_m) \leq \dim(\col(F_1)+\cdots+\col(F_m))$, it suffices to show that the former is at least $m \cdot \rank(E_s)$.
	
	Let $\{v_1,\dots,v_m\}$ be any set of vectors such that $v_j \in S_j$ for all $j \in [m]$ and $v_1 + \cdots + v_m = 0$. We will show that necessarily $v_j = 0$ for all $j \in [m]$. To see this, pick $j_1 \in [m]$ such that $g(j_1)$ is minimal and observe that $g$ is necessarily injective by \eqref{eq:triangularcriterion}, so that this $j_1$ is uniquely defined. If we then apply $f_j$, or equivalently multiply on the left by $T_{g(j_1)}^\top$, we find by \eqref{eq:triangularcriterion}
	\begin{align*}
		0 &= T_{g(j_1)}^\top\left(v_1 + \cdots + v_m\right) \\
		&= T_{g(j_1)}^\top v_{j_1}.
	\end{align*}
	Since $f_{j_1}|_{S_{j_1}}$ is injective, it follows that $v_{j_1} = 0$. Now continue by picking $j_2 \in [m] \setminus \{j_1\}$ such that $g(j_2)$ is minimal. For the same reason, we find that $v_{j_2} = 0$. By repeating this process, we eventually find that $v_j = 0$ for all $j \in [m]$, which shows (b).
\end{proof}

%Clearly, we could modify the definition by replacing `$h < g(j)$' in \eqref{eq:triangularcriterion} by `$h > g(j)$' and obtain the same result. 
It seems quite magical that such a $g$ could exist, and yet in the situations we consider this will be the case. It is especially surprising that we can simply take the matrix $T_h^\top$ in \eqref{eq:triangularcriterion} instead of needing more general linear combinations of the `type $i$'-adjacency matrices to make this step work. \\

We will now show how to verify the triangle criterion in practice.

\begin{lemma}\label{lem:checktrianglecriterion}
	Let $\{F_1,\dots,F_m\}$ be as defined before. The function $g:[m] \rightarrow [\ell]$ satisfies the triangle criterion if and only if for all $j \in [m]$ we have
	\begin{align*}
		&\sum_{i = 1}^\ell f_{ij} t_{hi}^k = 0 \text{ for all } k \in \{0,\dots,d\} \text{ and } h < g(j), \text{ and } \\
		&\sum_{i = 1}^\ell \sum_{k=0}^d f_{ij} t_{g(j),i}^kp_k(r) \begin{cases}
			\neq 0 & \text{if } r = s, \\
			= 0 & \text{if } r \neq s.
		\end{cases}
	\end{align*}
\end{lemma}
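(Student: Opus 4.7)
The plan is to expand $T_h^\top F_j$ in two different bases of the commutative association scheme $\cA(G,G/P_J)$ and then read off the two numerical conditions by linear independence. Starting from $F_j = \sum_{i=1}^\ell f_{ij}T_i$ and using \eqref{eq:expandproductsofTinassocscheme} to expand each product $T_h^\top T_i$ in the $\{A_k\}$-basis, one immediately obtains
\[ T_h^\top F_j \;=\; \sum_{k=0}^d \left( \sum_{i=1}^\ell f_{ij}\, t_{hi}^k \right) A_k. \]
This single identity drives both halves of the proof.

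For the first part of the triangle criterion, I would use that the $A_k$ are $\{0,1\}$-matrices with pairwise disjoint supports, hence are linearly independent over $\C$. Therefore $T_h^\top F_j = 0$ holds if and only if every coefficient $\sum_{i=1}^\ell f_{ij}\, t_{hi}^k$ vanishes, for all $k \in \{0,\dots,d\}$. Applying this to each $h < g(j)$ is precisely the first of the two listed conditions.

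For the diagonal entry $h = g(j)$, I would change basis to the primitive idempotents via $A_k = \sum_{r=0}^d p_k(r)\,E_r$, which yields
\[ T_{g(j)}^\top F_j \;=\; \sum_{r=0}^d \left( \sum_{i=1}^\ell \sum_{k=0}^d f_{ij}\, t_{g(j),i}^k\, p_k(r) \right) E_r. \]
Since the $E_r$ form a basis of the algebra, this expression equals a nonzero scalar multiple of $E_s$ if and only if the coefficient at $r = s$ is nonzero and all other coefficients vanish; the nonzero coefficient is then precisely the constant $\alpha_j$ appearing in \eqref{eq:triangularcriterion}. This matches the second listed condition exactly, and combining it with the previous paragraph gives both directions of the equivalence.

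There is no substantive obstacle beyond bookkeeping: the heavy lifting has already been done in \Cref{subsection:step2} via \eqref{eq:expandproductsofTinassocscheme} and in the identification of the idempotent $E_s$ attached to the module $M$. The only care required is to keep the two bases straight — the structure-constant basis $\{A_k\}$ is the natural one for expressing $T_h^\top T_i$, while the idempotent basis $\{E_r\}$ is the natural one for detecting a scalar multiple of $E_s$ — and to note that the $P$-matrix $\bigl(p_k(r)\bigr)$ is exactly the change-of-basis between them.
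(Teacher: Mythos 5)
Your proposal is correct and follows essentially the same route as the paper: expand $T_h^\top F_j$ in the $\{A_k\}$-basis via \eqref{eq:expandproductsofTinassocscheme}, use linear independence of the $A_k$ for the vanishing conditions, and pass to the idempotent basis via the $P$-matrix to detect the multiple of $E_s$. The only cosmetic difference is that you substitute $A_k=\sum_r p_k(r)E_r$ directly, whereas the paper multiplies by each $E_r$; these are the same computation.
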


\begin{proof}
	Let $h \in [\ell]$ and consider the product $T^\top_h F_j$. We can expand $F_j$ in terms of its definition, and rewrite the products $T_h^\top T_i$ using \eqref{eq:expandproductsofTinassocscheme}:
	\begin{align}
		T^\top_h F_j &= T^\top_h \left(\sum_{i = 1}^\ell f_{ij}T_i\right) \notag\\
		&= \sum_{i = 1}^\ell \sum_{k=0}^d f_{ij} t_{hi}^kA_k. \label{eq:productTFinassocscheme}
	\end{align}
	It follows that the second requirement of the triangle criterion is satisfied if and only if $\sum_{i = 1}^\ell f_{ij} t_{hi}^k = 0$ for all $k \in \{0,\dots,d\}$ and $h < g(j)$, as the matrices $\{A_0,\dots,A_d\}$ are linearly independent. As for the first requirement, we will multiply the equation above for $h = g(j)$ with $E_r$ to find
	\begin{align*}
		T^\top_{g(j)} F_j &= \sum_{i = 1}^\ell \sum_{k=0}^d f_{ij} t_{g(j),i}^kp_k(r)E_r.
	\end{align*}
	It follows that $g$ satisfies the first requirement if and only if
	\begin{align*}
		\sum_{i = 1}^\ell \sum_{k=0}^d f_{ij} t_{g(j),i}^kp_k(r) \begin{cases}
			\neq 0 & \text{if } r = s, \\
			= 0 & \text{if } r \neq s.
		\end{cases}
	\end{align*}
\end{proof}	

Even though checking the requirements of \Cref{lem:checktrianglecriterion} may look daunting, for our choices of $J$, we have either $d \leq 2$ or $m = 1$. Moreover, the coefficients $f_{ij}$ will be powers of $q$ up to sign or zero, which will aid in simplifying the computations.

\subsection{Summary}

We can summarize the necessary computations in the following overview. For every module $M$ listed in \Cref{thm:multonmodules} we will do the following.

\begin{itemize}[{}]
	\item \textbf{Step 1.} Choose $J \subseteq [n]$ such that $\rho_M$ is a component of $\mathrm{ind}_{P_J}^G(1_{P_J})$ and define the type of a maximal flag. Compute $|\cC_i^X|$ for all $i \in [\ell]$.
	\item \textbf{Step 2.} Compute the quotient matrix $Q$ defined by $Q_{ij} = |\cC_j^X \cap \opp(c)|$ for arbitrary $c \in \cC_i^X$, $i,j \in [\ell]$. Find $m$ linearly independent eigenvectors for $\lambda_{\min}$.
	\item \textbf{Step 3.} Compute $t_{ij}^k=|\cC_i^X \cap \cC_j^Y|$, for all $i,j \in [\ell]$ and $X$ and $Y$ in relation $k \in \{0,\dots,d\}$. Find $g:[m]\rightarrow[\ell]$ with the triangular criterion through \Cref{lem:checktrianglecriterion}.
\end{itemize}

\begin{remark}
	The quantities $|\cC_i^X|$, $Q_{ij}$ and $t_{ij}^k$ can be computed in different ways. We will employ a geometrical approach, as EKR problems are more often studied by combinatorialists. However, we would be remiss to not mention other approaches. Abramenko, Parkinson and Van Maldeghem \cite{APVM17} developed a building-theoretical point of view to give formulae for these quantities. In this setting, $|\cC_i^X|$ is the cardinality of a generalized sphere in a building, for which \cite[Theorem 2.1]{APVM17} provides a general formula. In our case, it reduces to $|\cC_i^X| = q^{\ell(w_i)}\sum_{w \in W_J}q^{\ell{(w)}}$, which we have seen before. On the other hand, the quantities $Q_{ij}$ and $t_{ij}^k$ are called intersection numbers, as they are the cardinality of intersections of generalized spheres. In the notation of \cite{APVM17}, these are respectively $c^{w_i}_{w_j,w_0}(J,\emptyset,\emptyset)$ and $c_{w_j,w_i}^{k}(J,\emptyset,J)$, where $w_i,w_j$ are the coset representatives of minimal length of type $i$ and $j$ respectively, and $w_0$ is the longest word in $W$. They can be computed in a combinatorial way using galleries \cite[Theorem 3.7]{APVM17}. 
	%We will demonstrate this point of view for type $A_n$ in \Cref{app:buildingcomputations}.
\end{remark}

%Instead of considering the full algebra $\cA(G,G/B)$, we will take a maximal parabolic subgroup $P_J$ and look at $\cA(G,G/P_J)$. The advantage is that here we usually have a commutative matrix algebra, and thus we can study the common eigenspaces. In particular, for our choices of $J$, we will find a $P$-polynomial association scheme, which is a well-studied object in the literature \cite{BCN89}. Since the representations of $\cA(G,G/B)$ restrict to those of $\cA(G,G/P_J)$, we we want to find choices of $J$ such that the relevant representations in \Cref{thm:multonmodules} do not vanish upon restriction. This can in principle be deduced by investigating the decomposition of the permutation character $1^G_{P_J}$ into irreducibles. However, in \cite{CIK71} it is shown that this is equivalent to decomposing the permutation character $1^W_{W_J}$, where $W$ is the underlying Weyl group.

\section{Application in each type} \label{sec:specifictypes}

Unfortunately, it seems that in order the apply the method outlined in the previous section, we have to treat each case separately. We will do so in the following few subsections, resulting in the main results for type $A_n$ (\Cref{cor:typeAmainresult}), type $B_n$ (\Cref{cor:typeBmainresult} and \Cref{sec:typeBhardmodule}) and type $D_n$ (\Cref{cor:typeDmainresult}).

\subsection{Type $A_n$}

Let us reiterate the goal in type $A_n$. The eigenspace of $A_{w_0}$ corresponding to $\lambda_{\min}$ is contained in $M_{[n,1]}$ (\Cref{maintheoremprev}) and has dimension $\lceil n/2 \rceil(q^{n+1}-q)/(q-1)$ (\Cref{thm:multonmodules}). \\

We will choose $J = [n]\setminus\{1\}$, so that the parabolic subgroup $P_J$ is the stabilizer of a point in $\PG(n,q)$. This choice is motivated by several reasons. First of all, as we saw before, the number of orbits on maximal flags equals $n+1$, which is minimal among all choices of $J \subsetneq [n]$. Secondly, the association scheme on points is very simple, being the $1$-class trivial scheme. This is encoded by the action of $P_J$ on the set of points, which clearly has two orbits: the fixed point itself and all other points. Equivalently, the association scheme has only two basis relations: identity and non-identity. We can also see this algebraically in the decomposition of the corresponding permutation character $1^G_{P_J}$, or equivalently on the level of the corresponding Weyl group (by the results of Curtis, Iwahori and Kilmoyer \cite{CIK71}). The decomposition in the Weyl group $\Sym(n+1)$ can be computed by the branching rule \cite[Section 6.1.8]{GP00} and is \[\mathrm{ind}_{W_J}^W(1_{W_J}) = \chi_{[n+1]} + \chi_{[n,1]}.\] 

Now, the appearance of $\chi_{[n,1]}$ is necessary for the method to work, but it does not motivate our particular choice of $J$: the multiplicity of $\chi_{[n,1]}$ in $1^G_{P_J}$ for general $J \subseteq [n]$ is $n-|J|$ \cite[Remark 6.3.7]{GP00}. So from this point of view, any choice for $J$ would be good.

The final reason for our choice of $J$ is that the automorphism groups of the known maximal examples when $n$ is odd are point stabilizers (or their duals) and we will see that these examples can be easily described in terms of the resulting eigenvectors. \\

\noindent \textbf{Step 1.} Let $J = [n] \setminus \{1\}$. Then $G/P_J$ can be identified with the points of $\PG(n,q)$ and we will denote this set as $\cP$. 

\begin{notation}
	We will denote the number of points and maximal flags in $\PG(n,q)$ by $v(n)$ and $c(n)$ respectively. We suppress $q$ from this notation as it is assumed to be fixed throughout. Observe that
	\begin{align*}
		&|\cP| = v(n) = \gauss{n+1}{1} \,\, \text{ and }\,\, |\cC| = c(n) =\prod_{i=1}^n v(i).
	\end{align*}
\end{notation}

One can check that the minimal length coset representatives of $W_J \backslash W$ are $w_1 := 1$ and $w_i := s_1s_2\cdots s_{i-1}$, $i \in \{2,\dots,n+1\}$.
By \eqref{eq:sizeofCiX}, we immediately deduce that for any $X \in \cP$
\begin{align}\label{eq:typeAsizeofCiX}
	|\cC_i^X| = c(n-1)q^{i-1},
\end{align}
as $W_J$ is a Weyl group of type $A_{n-1}$. \\

We can describe the type of a maximal flag geometrically. Recall that $\dim(U)$ always refers to the vector dimension or rank of a subspace $U \subset \PG(n,q)$. We usually write maximal flags as $(U_1,\dots,U_n)$, where $U_i \subseteq U_{i+1}$ and $\dim(U_i) = i$ for all $i \in [n]$, with the understanding that $U_{n+1}$ denotes the whole space.
 
\begin{defin}\label{def:typeofflag}
	Let $c = (U_1,\dots,U_{n})$ be a maximal flag in $\PG(n,q)$, and $X \in \cP$. The \textbf{type} of $c$ w.r.t.\ $X$ is the smallest index $k$ such that $X \in U_k$ and $n+1$ otherwise.
\end{defin}	

This is the specialization of the more general \Cref{def:typegeneral} applied to the aforementioned choices of $G$, $P_J$. Recall that the Weyl group is $W = \Sym(n+1)$ and so the corresponding subgroup $W_J$ is the stabilizer of the element $1$. The coset representatives can then be written as $w_k = s_1s_2\dots s_k$, where $s_i$ is the transposition permuting $i$ and $i+1$. The geometric interpretation in \Cref{def:typeofflag} can then be obtained by looking at the action of $w_i$ on the standard maximal flag $(U_1,\dots,U_n)$ where $U_j = \mathrm{span}(e_1,\dots,e_j)$ and taking the orbit of this flag under $B$. \\

\noindent \textbf{Step 2.} Now let $Q$ be the $(n+1) \times (n+1)$ quotient matrix of the opposition graph with respect to the action of $P_J$ on $\cC$. Recall that the value $Q_{ij}$ is the number of flags of type $j$ opposite to a given flag of type $i$.

\begin{lemma}\label{lem:typeAquotientmatrix}
	The entries of the quotient matrix are
	\begin{align*}
		Q_{ij}=\begin{cases}
		0 & \text{ if } i+j < n+2\\
		(q-1+\delta_{i+j,n+2})q^{\frac{1}{2}(n^2-n)+j-2} & \text{ if } i+j \geq n+2.
	\end{cases}
	\end{align*}
\end{lemma}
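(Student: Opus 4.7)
The plan is to prove the formula by double counting, leveraging the fact that $G$ acts transitively on ordered pairs of opposite maximal flags. Define $N(i,j)$ to be the number of projective points $X$ such that, for a fixed pair of opposite maximal flags $c = (U_1, \ldots, U_n)$ and $f = (V_1, \ldots, V_n)$ in $\PG(n,q)$, the flag $c$ has type $i$ and $f$ has type $j$ with respect to $X$. By transitivity on opposite pairs, $N(i,j)$ does not depend on the particular pair chosen.

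Counting triples $(X, c, f)$ in which $c$ has type $i$ and $f$ has type $j$ w.r.t.\ $X$, with $c$ opposite to $f$, in two ways yields
\[
v(n) \cdot |\cC_i^X| \cdot Q_{ij} \;=\; c(n) \cdot q^{\binom{n+1}{2}} \cdot N(i,j),
\]
since the number of flags opposite to any fixed maximal flag equals $|Bw_0B/B| = q^{\binom{n+1}{2}} = q^{\ell(w_0)}$. Using \eqref{eq:typeAsizeofCiX} and the identity $c(n) = v(n) c(n-1)$, this simplifies to $Q_{ij} = q^{\binom{n+1}{2} - i + 1}\, N(i,j)$, so the task reduces to computing $N(i,j)$.

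The key geometric input is the dimension formula for opposite flags, $\dim(U_a \cap V_b) = \max(0, a+b-n-1)$, which follows from the fact that opposition is equivalent to $U_a \oplus V_{n+1-a} = \Fq^{n+1}$ for all $a$. An inclusion-exclusion on the conditions $X \in U_i \setminus U_{i-1}$ and $X \in V_j \setminus V_{j-1}$ expresses $N(i,j)$ as the alternating sum of the projective point counts of $U_{i-\varepsilon} \cap V_{j-\delta}$ for $\varepsilon, \delta \in \{0,1\}$. Splitting by the value of $i+j$: when $i + j \leq n+1$ all four intersections are trivial, so $N(i,j) = 0$; when $i + j = n+2$ only the first intersection is 1-dimensional, giving $N(i,j) = 1$; when $i + j \geq n+3$ a short computation yields $N(i,j) = (q-1)\, q^{i+j-n-3}$. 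Substituting into the expression for $Q_{ij}$ and simplifying via $\binom{n+1}{2} = n(n+1)/2$ reproduces the claimed formula, with the $\delta_{i+j,n+2}$ correction unifying the boundary case $i + j = n+2$ with the generic case.

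The step most likely to require care is the case $i + j = n + 2$: here only the leading term of the inclusion-exclusion survives and the generic formula $(q-1)q^{i+j-n-3}$ would literally give $q^{-1}(q-1)$, so one must verify by hand that the two separately-derived expressions combine cleanly into the unified form with the Kronecker delta. Everything else is routine, and as a sanity check one can use \eqref{eq:quotientmatrixdoublecounting}, which forces $Q_{ij}/Q_{ji} = q^{j-i}$, consistent with the stated formula.
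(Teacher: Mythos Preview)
Your proof is correct and takes essentially the same approach as the paper. The paper fixes a single flag $c$ and double counts the set $S=\{(Y,d):d\in\cC_j^Y\cap\opp(c),\ c\in\cC_i^Y\}$, arriving at exactly your quantity $N(i,j)$ via the same inclusion--exclusion on $|U_i\cap V_j|-|U_{i-1}\cap V_j|-|U_i\cap V_{j-1}|+|U_{i-1}\cap V_{j-1}|$; your version merely globalizes this by also summing over $c$, introducing factors $v(n)$ and $c(n)$ that cancel immediately.
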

\begin{proof}
	Fix a maximal flag $c=(U_1,\dots,U_n)$ and let $S := \{(Y,d) \,\, | \,\, d \in \cC_j^Y \cap \opp(c) \text{ and } c \in \cC_i^Y\}$. Since $Y \in U_i \setminus U_{i-1}$, there are $q^{i-1}$ choices for $Y$ and by symmetry they all appear in the same number $x$ of pairs. Hence $|S|=xq^{i-1}$. 
	
	On the other hand, there are $q^{n+1\choose 2}$ maximal flags $d=(V_1,\dots,V_n)$ that are opposite to $c$ and for a point $Y$ we have $(Y,d)\in S$ if and only if $Y\in U_i\setminus U_{i-1}$ and $Y\in V_j\setminus V_{j-1}$. If $i+j\le n+1$, then $U_i\cap V_j=\emptyset$ and hence $S=\emptyset$. In this case $x=0$. If $i+j=n+2$, then $U_i\cap V_j$ is a point $Y$ with $Y\notin U_{i-1}\cup V_{j-1}$. Hence, if $i+j=n+2$, then $d$ occurs in exactly one pair of $S$, so $|S|=q^{n+1\choose 2}$ and thus $xq^{i-1}=q^{{n+1\choose 2}}$ giving $x=q^{\frac{1}{2}(n^2-n)+j-1}$ as $i=n+2-j$. Finally, assume that $i+j > n+2$. Then the number of points $Y$ that are contained in $U_i\setminus U_{i-1}$ and in $V_j\setminus V_{j-1}$ is equal to
	\begin{align*}
		|U_i\cap V_j|-|U_i\cap V_{j-1}|-|U_{i-1}\cap V_j|+|U_{i-1}\cap V_{j-1}|
		=q^{i+j-n-2}(q-1)
	\end{align*}
	so $xq^{i-1}=q^{n+1\choose 2}q^{i+j-n-2}(q-1)$ giving $x=(q-1)q^{\frac{1}{2}(n^2-n)+j-2}$.
\end{proof}

\begin{exa}
	For $n = 3$ we find the quotient matrix
	\[\begin{pmatrix}
		0 & 0 & 0 & q^6 \\
		0 & 0 & q^5 & q^6-q^5 \\
		0 & q^4 & q^5-q^4 & q^6-q^5 \\
		q^3 & q^4-q^3 & q^5-q^4 & q^6-q^5 
	\end{pmatrix}\]
	with eigenvalues $q^6$, $q^4$ and $-q^4$ twice. Corresponding eigenvectors are $(1,1,1,1)$, $(q^2,-q,-q,1)$, $(0,q,-1,0)$ and $(q^2,q^2,-1,-1)$. These are clearly the eigenvalues coming from the characters $\chi_{[n+1]}$ and $\chi_{[n,1]}$, and the eigenvectors possess some structure that allow us to generalize.
\end{exa}

We will now restrict ourselves to the case when the rank $n$ is odd. Even though computations for even $n$ are similar, the smallest eigenvalue in this case is not an integer power of $q$. It follows that the bound on the size of sets of non-opposite flags obtained in \cite{DBMM22} is not necessarily an integer for all $q$ and hence can not be sharp for those values of $q$. We therefore have no use for a basis of the eigenspace, as there will not be a possibility for a classification of the maximal EKR-sets. For this reason \textbf{we assume for the rest of this section that $n$ is odd}.

\begin{theorem}
	Write $n = 2m-1$. For $j \in [m]$ define the column vector
	\[v_j := (\underbrace{0,\dots,0}_{m-j},\underbrace{q^j,\dots,q^j}_{j},\underbrace{-1,\dots,-1}_{j},\underbrace{0,\dots,0}_{m-j})^\top.\]
	Then $v_j$ is an eigenvector of $Q$ with eigenvalue $-q^{(n^2-1)/2}$.
\end{theorem}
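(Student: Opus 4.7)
The plan is to verify $Qv_j = -q^{(n^2-1)/2} v_j$ directly, by computing $(Qv_j)_k$ for each $k \in [n+1]$ using the closed formula of \Cref{lem:typeAquotientmatrix}. Setting $\alpha := q^{(n^2-n)/2}$ the target eigenvalue factors as $-\alpha q^{m-1}$, which keeps the exponent bookkeeping transparent. Since $Q_{ki} \neq 0$ requires $i + k \geq n+2$ and $v_j$ is supported on $\{m-j+1,\dots,m+j\}$, the natural strategy is to split into four cases according to where $k$ sits relative to the four blocks of $v_j$: zero on the left ($k \le m-j$), the $q^j$-block ($m-j+1 \le k \le m$), the $-1$-block ($m+1 \le k \le m+j$), and zero on the right ($k \ge m+j+1$).

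The two outer cases, where $(v_j)_k = 0$, should produce vanishing sums. For $k \le m-j$ the supports of $v_j$ and of $Q_{k,\cdot}$ are disjoint, because $n+2-k \ge m+j+1 > m+j$. For $k \ge m+j+1$ the delta $\delta_{i+k,n+2}$ never fires on the support of $v_j$ (all such $i+k$ are strictly larger than $n+2$), so $Q_{ki} = (q-1)\alpha q^{i-2}$ throughout; summing geometric series separately over the $q^j$-block and the $-1$-block of $v_j$ gives $\alpha q^{m-1}(q^j-1)$ and $-\alpha q^{m-1}(q^j-1)$, which cancel. For the inner cases the boundary index $i = n+2-k$ lies inside the support of $v_j$ and contributes the extra factor $q$ from the Kronecker delta. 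In the $q^j$-block case, only the $-1$-half of $v_j$ contributes (the $q^j$-half is excluded by $i+k \ge n+2$), and the boundary term combined with a short geometric sum telescopes via $\alpha q^{2m-k}\bigl(1 + (q^{k+j-m-1}-1)\bigr) = \alpha q^{m+j-1}$, giving $(Qv_j)_k = -\alpha q^{m+j-1} = -\alpha q^{m-1}\cdot q^j$, as required. In the $-1$-block case both halves of the support contribute: the $q^j$-half telescopes in the same way to $\alpha q^{m+j-1}$ and the $-1$-half contributes $-\alpha q^{m-1}(q^j-1)$, summing to $\alpha q^{m-1} = -\alpha q^{m-1}\cdot(-1)$.

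I do not expect any genuine obstacle beyond careful index management; the computation is routine once the four-case split is fixed. The one subtle point is the interplay between the extra factor $q$ at the boundary index $i = n+2-k$ (which supplies what is effectively the missing $k=0$ term of a geometric progression) and the specific coefficients $q^j$ and $-1$ of the two nonzero blocks of $v_j$. This is precisely what makes the telescoping identity $\alpha q^{2m-k} + (q-1)\alpha q^{2m-k}\cdot(q^{k+j-m-1}-1)/(q-1) = \alpha q^{m+j-1}$ produce a clean answer, and retrospectively explains why the ansatz with coefficients $(q^j,\dots,q^j,-1,\dots,-1)$ positioned symmetrically about the middle is the correct one.
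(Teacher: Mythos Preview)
Your proposal is correct and follows essentially the same approach as the paper's proof: a direct verification of $(Qv_j)_k$ via the same four-case split according to which block of $v_j$ the row index $k$ falls into, using the explicit entries from \Cref{lem:typeAquotientmatrix} and geometric-series telescoping. The paper organises the row sums using $\sum q^{-i}$ rather than your factorisation $\alpha q^{m-1}$ of the eigenvalue, but the computations and the case structure are the same.
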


\begin{proof}
	Let $w_1,\dots,w_{n+1}$ be the row vectors of $Q$. We have to show that $w_kv_j = -q^{(n^2-1)/2}(v_j)_k$ for all  
	$1 \leq j,k \leq n+1$, where $(v_j)_k$ is the $k$-th entry of $v_j$. From \Cref{lem:typeAquotientmatrix} we have 
	\[w_k = q^{\frac{1}{2}n(n+1)} \cdot (\underbrace{0,\dots,0}_{n+1-k},q^{-k+1},q^{-k+1}(q-1),q^{-k+2}(q-1),\dots,q^{-1}(q-1)).\]
	
	For two non-negative integers $a \leq b$, we will use that $\sum_{i = a}^b q^{-i} = \frac{q^{b-a+1}-1}{q^b(q-1)}$ throughout. 
	
	\noindent Case 1. $k \leq m-j$. Then $w_kv_j = 0$ and $(v_j)_k = 0$.
	
	\noindent Case 2. $m - j < k \leq m$. Then $(v_j)_k = q^j$ and
	\[w_kv_j = q^{\frac{1}{2}n(n+1)}\left(-q^{-k+1}-(q-1)\sum_{i=m-j+1}^{k-1}q^{-i}\right) = -q^{\frac{1}{2}n(n+1)-m+j} = -q^{(n^2-1)/2} \cdot q^j,\]
	where the summation is to be understood as $0$ if $k = m-j+1$.
	
	\noindent Case 3. $m < k \leq m + j$. Then $(v_j)_k = -1$ and
	\begin{align*}
		w_kv_j = q^{\frac{1}{2}n(n+1)}\left(q^{j-k+1}+q^j(q-1)\sum_{i=m+1}^{k-1}q^{-i}-(q-1)\sum_{i = m-j+1}^{m}q^{-i}\right) = -q^{(n^2-1)/2} \cdot (-1),
	\end{align*}
	where the second summation is to be understood as $0$ if $k = m+1$.
	
	\noindent Case 4. $m + j < k$. Then $(v_j)_k = 0$ and 
	\[w_kv_j = q^{\frac{1}{2}n(n+1)}\left(q^j(q-1)\sum_{i=m+1}^{m+j}q^{-i}-(q-1)\sum_{i = m-j+1}^{m}q^{-i}\right) = 0. \qedhere\]
\end{proof}

As described in \Cref{subsection:step2}, we can now define matrices whose columns are eigenvectors of $A_{w_0}$.

\begin{defin}\label{def:typeAmatrixofeigenvectors}
	For each $j \in [m]$ we can define the $|\cC| \times |\cP|$ matrix $F_j$ as
	\[F_j = q^j\sum_{i=m-j+1}^{m} T_i-\sum_{i=m+1}^{m+j} T_i,\]
	where $T_i$ denotes the type $i$ adjacency matrix.
\end{defin}

\noindent \textbf{Step 3.} We will now prove that the columns of $F_j$, $j \in [m]$, span the $\lambda_{\min}$-eigenspace of $A_{w_0}$.

\begin{lemma}\label{lem:typeAbetas} 
	For all $i,j \in [n+1]$ we have
	\[|\cC_i^X \cap \cC_j^Y| = \begin{cases}
		\delta_{ij}c(n-1)q^{i-1} & \text{if } X = Y \\
		c(n-2)q^{i-2}(q^{j-1}-\delta_{ij}) & \text{if } X \neq Y.
	\end{cases}\]
\end{lemma}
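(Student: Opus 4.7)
The plan is to treat the two cases separately, handling $X=Y$ trivially and then reducing the $X\ne Y$ case to a single double count, using the $G$-action to justify that the intersection number depends only on whether $X$ and $Y$ coincide.

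For the case $X = Y$, observe that by \Cref{def:typeofflag} every maximal flag $c$ has a unique type with respect to a fixed point; hence the sets $\{\cC_1^X, \dots, \cC_{n+1}^X\}$ partition $\cC$. Therefore $\cC_i^X \cap \cC_j^X$ is empty unless $i=j$, and equals $\cC_i^X$ in that case. The size then comes straight from \eqref{eq:typeAsizeofCiX}.

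For $X \ne Y$, the key observation is that $G = \PGL(n+1,q)$ acts 2-transitively on points of $\PG(n,q)$, so it acts transitively on ordered pairs of distinct points. Since the opposition graph structure and the notion of type are $G$-equivariant, the quantity $|\cC_i^X \cap \cC_j^Y|$ is independent of the choice of distinct pair $(X,Y)$. This allows a double count of the set
\[
S_{i,j}^X \;=\; \bigl\{\, (Y,c) \,:\, Y \in \cP \setminus \{X\},\ c \in \cC_i^X \cap \cC_j^Y \,\bigr\}.
\]
Summing first over $Y$ and using the transitivity remark gives $|S_{i,j}^X| = (|\cP|-1)\cdot |\cC_i^X \cap \cC_j^Y|$ for any fixed $Y \ne X$. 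Summing instead over $c = (U_1,\dots,U_n) \in \cC_i^X$ first, we must count the points $Y \ne X$ with $Y \in U_j \setminus U_{j-1}$. When $j \ne i$ every such $Y$ is automatically distinct from $X$ (which lies in $U_i\setminus U_{i-1}$), and there are $|U_j \setminus U_{j-1}| = q^{j-1}$ choices; when $j=i$ we must subtract the single point $Y = X$, giving $q^{i-1}-1$ choices. In both cases the count is $q^{j-1} - \delta_{ij}$, so
\[
|S_{i,j}^X| \;=\; |\cC_i^X|\cdot (q^{j-1} - \delta_{ij}) \;=\; c(n-1)\,q^{i-1}(q^{j-1}-\delta_{ij}).
\]

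Equating the two expressions and using $|\cP|-1 = v(n) - 1 = q\,v(n-1)$ together with $c(n-1) = v(n-1)\,c(n-2)$ yields the claimed formula. I expect no serious obstacle: the only mildly delicate point is confirming that the sets $\cC_i^X \cap \cC_j^Y$ really have the same size for every pair of distinct points, which is immediate from the 2-transitivity of $\PGL(n+1,q)$; and the final algebraic simplification is purely routine bookkeeping with the $q$-analogues.
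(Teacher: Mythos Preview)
Your proposal is correct and follows essentially the same approach as the paper: the $X=Y$ case is handled trivially via the partition into types, and the $X\ne Y$ case is done by the same double count of pairs $(Y,c)$ with $Y\ne X$ and $c\in\cC_i^X\cap\cC_j^Y$, using transitivity on pairs of distinct points (which the paper phrases simply as ``by symmetry'') and the identity $v(n)-1=qv(n-1)$.
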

\begin{proof}
	When $X = Y$, we immediately see that $|\cC_i^X \cap \cC_j^Y| = \delta_{ij}|\cC_i^X|$ as a flag has only one type and we know this from \eqref{eq:typeAsizeofCiX}.
	
	So fix $X \in \cP$. We will count the pairs $(Y,c)$ such that $X \neq Y$ and $c \in \cC_i^X \cap \cC_j^Y$ in two ways. There are $|\cC_i^X|=c(n-1)q^{i-1}$ possibilities for $c$ for every such $c$ there are $q^{j-1}-\delta_{ij}$ ways to pick $Y$. The $\delta_{ij}$ appears since $Y$ has to be distinct from $X$. On the other hand, for every of the $v(n)-1 = qv(n-1)$ possibilities for $Y$, there are equally many possibilities for $c$ by symmetry, namely $|\cC_i^X \cap \cC_j^Y|$ many. We conclude that 
	\[|\cC_i^X \cap \cC_j^Y|\cdot qv(n-1) = c(n-1)q^{i-1}(q^{j-1}-\delta_{ij}),\]
	from which the lemma follows.
\end{proof}

Recall that the association scheme on $\cP$ is trivial: the basis matrices $\{A_0,A_1\}$ are $A_0 = I_\cP$ and $A_1 = J_\cP-I_\cP$. Consequently, the minimal idempotents are $E_0 = \frac{1}{|\cP|}J_\cP$ and $E_1 = I_\cP - \frac{1}{|\cP|}J_\cP$. 
In this case, the $P$-matrix is 
\[\begin{pmatrix}
	1 & v(n)-1 \\
	1 & -1
\end{pmatrix}\]
and $E_s = E_1$ is the special idempotent corresponding to the module indexed by $[n,1]$. With this in hand, we are now in the position to state the main theorem of this section.

\begin{theorem}
	Define $g:[m]\rightarrow[m]$ by $g(j):=m-j+1$. Then $g$ satisfies the triangular criterion.
\end{theorem}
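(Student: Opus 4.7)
The plan is to apply \Cref{lem:checktrianglecriterion}. Because $P_J$ acts on $\cP$ as the trivial one-class scheme, only two basis relations $A_0 = I_\cP$ and $A_1 = J_\cP - I_\cP$ appear, so the criterion reduces to computing the coefficients of $T_h^\top F_j$ in the $\{A_0, A_1\}$ basis using the intersection numbers from \Cref{lem:typeAbetas}, and then translating into the $(E_0, E_1)$ basis.

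For the $A_0$-coefficient, the Kronecker factor in $t_{hi}^0 = \delta_{hi}\, c(n-1)q^{h-1}$ collapses the sum immediately to $f_{hj}\,c(n-1)q^{h-1}$. The $A_1$-coefficient becomes $c(n-2)q^{h-2}\bigl(\sum_i f_{ij}q^{i-1} - f_{hj}\bigr)$, so the key calculation is the vanishing $\sum_i f_{ij}\, q^{i-1} = 0$. This should follow from a one-line telescoping check: both $q^j\sum_{i=m-j+1}^{m}q^{i-1}$ and $\sum_{i=m+1}^{m+j}q^{i-1}$ equal $q^m + q^{m+1} + \cdots + q^{m+j-1}$. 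Once this is in place, $T_h^\top F_j$ simplifies to $f_{hj}\bigl(c(n-1)q^{h-1} A_0 - c(n-2)q^{h-2} A_1\bigr)$, and the zero-half of the triangular criterion is automatic from the support of $v_j$: by construction $f_{hj}=0$ whenever $h \le m-j$.

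The remaining obstacle, and the only step that requires a genuine identity, is to check that the residual matrix at $h = g(j) = m-j+1$ is a nonzero multiple of the special idempotent $E_s = E_1$. Expanding via $A_0 = E_0 + E_1$ and $A_1 = (v(n)-1)E_0 - E_1$ from the $P$-matrix, the $E_0$-coefficient becomes $c(n-2)q^{h-2}\bigl(q\,v(n-1) - (v(n)-1)\bigr)$. The hoped-for cancellation comes from the elementary identity $v(n)-1 = q\,v(n-1)$, which is just the geometric series expansion of $v(n) = 1 + q + \cdots + q^n$. This wipes out the $E_0$-component, leaves a strictly positive multiple of $E_1$ with nonzero scalar $\alpha_j = q^j\bigl(c(n-1)q^{m-j} + c(n-2)q^{m-j-1}\bigr)$, and thereby completes the verification of the triangular criterion for $g(j) = m-j+1$.
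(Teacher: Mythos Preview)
Your proof is correct and follows essentially the same approach as the paper: both invoke \Cref{lem:checktrianglecriterion} and the intersection numbers from \Cref{lem:typeAbetas}, and both hinge on the same telescoping identity $\sum_i f_{ij}q^{i-1}=0$. Your organization is slightly cleaner in that you factor out $f_{hj}$ early to obtain $T_h^\top F_j = f_{hj}\bigl(c(n-1)q^{h-1}A_0 - c(n-2)q^{h-2}A_1\bigr)$, which handles both halves of the criterion at once, whereas the paper verifies the four cases ($k=0,1$ for $h<g(j)$; $r=0,1$ for $h=g(j)$) separately.
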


\begin{proof}
	We will do so by checking the conditions in \Cref{lem:checktrianglecriterion} for all $j \in [m]$, which are
	\begin{align*}
		&\sum_{i = 1}^{n+1} f_{ij} t_{hi}^k = 0 \text{ for all } k \in \{0,1\} \text{ and } h < m-j+1 \text{ and } \\
		&\sum_{i = 1}^{n+1} \sum_{k=0}^1 f_{ij} t_{m-j+1,i}^kp_k(r) \begin{cases}
			= 0 & \text{if } r = 0, \\
			\neq 0 & \text{if } r = 1. 			
		\end{cases}
	\end{align*}

	We have the following information:
	\begin{enumerate}[(1)]
		\item By \Cref{def:typeAmatrixofeigenvectors} we have for all $j \in [m]$
		\[f_{ij} = \begin{cases}
			q^j & \text{if } m-j+1 \leq i \leq m, \\
			-1 & \text{if } m+1 \leq i \leq m+j, \\
			0 & \text{otherwise.}
		\end{cases}\]
		
		\item By \Cref{lem:typeAbetas} we have for all $i,j \in [{n+1}]$
		\[\begin{cases}
			t_{ij}^0 = \delta_{ij}c(n-1)q^{i-1} \\
			t_{ij}^1 = c(n-2)q^{i-2}(q^{j-1}-\delta_{ij}).
		\end{cases}\]
		\item The $P$-matrix of $\cA(G,G/P_J)$ is 
		\[P = \begin{pmatrix}
			1 & v(n)-1 \\
			1 & -1
		\end{pmatrix}\]
		\item The previous two points together imply
%		\[t_{ij}^0+t_{ij}^1 = c(n-2)q^{i+j-3}+\delta_{ij}c(n-2)q^{i-2}\left(v(n-1)\cdot q-1\right)\]
%		and
		\begin{align}\label{eq:typeAPtimesTcoeffs}
			\begin{pmatrix} t^0_{ij}p_0(0)+t^1_{ij}p_1(0) \\ t^0_{ij}p_0(1)+t^1_{ij}p_1(1) \end{pmatrix}
		= P
		\begin{pmatrix} t^0_{ij} \\ t^1_{ij} \end{pmatrix} 
		= \begin{pmatrix} c(n-1)q^{i+j-2} \\ c(n-2)\left(\delta_{ij}v(n)q^{i-2}-q^{i+j-3}\right) \end{pmatrix}
		\end{align}
	\end{enumerate}

	The first condition can now be rewritten as
	
	\[\sum_{i = m-j+1}^m q^j (t_{hi}^0+t_{hi}^1) - \sum_{i = m+1}^{m+j} (t_{hi}^0+t_{hi}^1) = 0 \text{ for all } h < m-j+1 \]
	The indicator $\delta_{hi}$ appearing in $t_{hi}^0$ and $t_{hi}^1$ will never be $1$ as $h < m-j+1 \leq i$, so that the first condition is trivially true for $k = 0$ and simplifies for $k = 1$ to
	\[c(n-2)\left(\sum_{i = m-j+1}^m q^{h+i+j-3} - \sum_{i = m+1}^{m+j} q^{h+i-3}\right) = 0,\]
	which is indeed true.
	
	Using $g(j) = m-j+1$ and the first row of \eqref{eq:typeAPtimesTcoeffs}, the second expression for $r=0$ simplifies to
	\[c(n-1)\sum^{n+1}_{i=1}f_{ij}q^{i+m-j-1} = c(n-1)\left(\sum_{i = m-j+1}^m q^{i+m-1} - \sum_{i = m+1}^{m+j} q^{i+m-j-1}\right)\]
	which is indeed zero.
	
	On the other hand, using the second row of \eqref{eq:typeAPtimesTcoeffs}, we find for $r=1$ that
	\[c(n-2)\sum_{i=1}^{n+1} f_{ij} \left(\delta_{i,m-j+1}v(n)q^{i-2}-q^{i+m-j-2}\right).\]
	The second term is again zero, while the contribution of the first term is $v(n)c(n-2)q^{m-1} \neq 0$.	
%	We will first consider the product $T_i^\top F_j$ when $i = m-j+1$. We will use
%	
%	In that case, the equation above for $r = 0$, using (3), becomes
%	\begin{align*}
%		&\sum^{n+1}_{i=1}f_{ij}\left(t^0_{m-j+1,i}+t^1_{m-j+1,i} \cdot q \cdot v(n-1)\right) \\
%		&\overset{(2)}{=} c(n-1)\sum^{n+1}_{i=1}f_{ij}q^{m+i-j-1} \\
%		&\overset{(1)}{=} c(n-1)\left(\sum_{i = m-j+1}^m q^{m+i-1}-\sum_{i = m+1}^{m+j}q^{m+i-j+1}\right)\\
%		&=0,
%	\end{align*}
%	where in the second step we used that $t^0_{m-j+1,i}+t^1_{m-j+1,i} \cdot q \cdot v(n-1) = c(n-1)q^{m+i-j-1}$. 
%	
%	On the other hand, for $r = 1$, the equation becomes
%	\begin{align*}
%		&\sum^{n+1}_{i=1}f_{ij}\left(t^0_{m-j+1,i}-t^1_{m-j+1,i}\right) \\
%		&\overset{(2)}{=} c(n-2)\sum^{n+1}_{i=1}f_{ij}\left(q^{m+i-j-2}+\delta_{m-j+1,i}q^{i-2}v(n-1)\right) \\
%		&\overset{(1)}{=}v(n-1)q^{m-j-1} \\
%		&\neq 0,
%	\end{align*}
%	since now $t^0_{m-j+1,i}-t^1_{m-j+1,i} = c(n-2)\left(q^{m+i-j-2}+\delta_{m-j+1,i}q^{i-2}(q\cdot v(n-1)+1)\right)$ and $q\cdot v(n-1)+1=v(n)$. This shows that the first half of the triangular criterion is satisfied.	
\end{proof}

\begin{cor}\label{cor:typeAmainresult}
	The $\lambda_{\min}$-eigenspace for $A_{w_0}$ in type $A_{2m-1}$, $m \geq 1$, is $\col(F_1)\oplus \cdots \oplus \col(F_m)$.
\end{cor}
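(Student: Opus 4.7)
The plan is to assemble the three ingredients already developed. First, each column of $F_j$ lies in the $\lambda_{\min}$-eigenspace of $A_{w_0}$: by construction in Section~\ref{subsection:step2}, the columns of any matrix built from the type-$i$ adjacency matrices via the coefficients of an eigenvector of $Q$ are eigenvectors of $A_{w_0}$ for the same eigenvalue, and the preceding theorem established that $v_j = (0,\dots,0,q^j,\dots,q^j,-1,\dots,-1,0,\dots,0)^\top$ is an eigenvector of $Q$ for $\lambda_{\min} = -q^{(n^2-1)/2}$ whose entries coincide with the coefficients defining $F_j$.

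Second, the immediately preceding theorem verifies that $g(j) = m-j+1$ satisfies the triangular criterion. Applying Lemma~\ref{lem:triangularcriterion} we conclude both that $\rank(F_j) \geq \rank(E_s) = v(n)-1$ for each $j \in [m]$, and that $\dim\bigl(\col(F_1) + \cdots + \col(F_m)\bigr) \geq m(v(n)-1)$.

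Third, I will invoke the dimension count. By Theorem~\ref{thm:multonmodules}, in type $A_{2m-1}$ the only irreducible module carrying $\lambda_{\min}$ is $M_{[n,1]}$, the multiplicity of $\lambda_{\min}$ on this module is $\mathrm{mult}_{[n,1]} = \lceil n/2 \rceil = m$, and the generic degree equals $(q^{n+1}-q)/(q-1) = v(n)-1$. Therefore the total multiplicity of $\lambda_{\min}$ as an eigenvalue of $A_{w_0}$ is exactly $m(v(n)-1)$.

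Combining these, $\col(F_1) + \cdots + \col(F_m)$ is contained in the $\lambda_{\min}$-eigenspace, which has dimension $m(v(n)-1)$, while the sum has dimension at least $m(v(n)-1)$. Equality therefore holds, which forces the sum to fill the entire eigenspace and to be direct (since $\dim \col(F_j) \geq v(n)-1$ for each $j$ and the sum of these lower bounds already equals the total dimension, each summand has dimension exactly $v(n)-1$ and the intersections are trivial). No step should be a genuine obstacle; the only mild subtlety is checking that the direct-sum conclusion really does follow from the dimension matching, which is immediate once one notes that any nontrivial intersection would force strict inequality between $\sum_j \dim \col(F_j)$ and $\dim\bigl(\sum_j \col(F_j)\bigr)$.
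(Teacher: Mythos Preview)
Your argument is correct and follows the same route as the paper: containment of each $\col(F_j)$ in the eigenspace, the lower bound $\dim\bigl(\sum_j \col(F_j)\bigr)\geq m(v(n)-1)$ from \Cref{lem:triangularcriterion}, and the matching upper bound from \Cref{thm:multonmodules}, whence equality and directness. You have simply spelled out in more detail the concluding sentence of the paper's proof.
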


\begin{proof}
	By \Cref{lem:triangularcriterion}, the space $\col(F_1)+ \cdots + \col(F_m)$ has dimension at least $d := m \cdot \frac{q^n-q}{q-1}$, while the eigenspace has dimension exactly $d$ by \Cref{thm:multonmodules}. It follows that the sum is direct and the two spaces coincide.
\end{proof}	

\begin{remark}
	As observed before, since the generic degree equals $|\cP|-1$ and $F_j$ has $|\cP|$ columns, we can actually extract a basis by taking the union of all columns but one of every $F_j$, $j \in [m]$.
\end{remark}

\begin{remark}
	With a little more work, our results in this section hold in more generality. If we consider the opposition graph on flags of cotype $J$, where $J = J^{w_0}$, then we can use the same method to obtain a spanning set for the eigenspace corresponding to the smallest eigenvalue. We will assume $m \notin J$, since otherwise we do not have examples of maximal EKR-sets attaining the upper bound \cite[Theorem 3.8]{DBMM22}. The multiplicity ${\rm mult}_{[n,1]} = (n+1)/2 = m$ will be replaced by $(n+1-|J|)/2$ and the possible types of a partial flag with respect to a fixed point will shrink from $n+1$ options to $n+1-|J|$ options. We can define the matrix $F_j$ from \Cref{def:typeAmatrixofeigenvectors} for all $j \in [m] \setminus J$ and continue as before to obtain a set of spanning vectors (and even a basis) of the eigenspace of the smallest eigenvalue.
	
%	\red{I did not actually check this, but it seems reasonable :-) In fact, what seems true is that if we consider the set $\cC_J$ of flags of cotype $J$, we can easily define a linear map $\phi:\mathbb{C}^\cC \to \mathbb{C}^{\cC_J}$ by mapping a flag $c$ to its restriction (this defines the map on the basis and hence on the whole space). Then $\phi$ will map eigenvectors to eigenvectors but collapses are possible. For example when $n = 3$ and $J = \{1,3\}$, i.e. we just consider lines in $\PG(3,q)$, we have $\phi(v_1) = (q+1)(q^2,-1)$ and $\phi(v_2) = (q+1)^2(q^2,-1)$ with respect to a fixed point, where we write the vector in collapsed form: $\phi(v_1)(\ell) = (q+1)q^2$ if $\ell$ contains the points and $-(q+1)$ otherwise. This can probably be proven, but we have to ask ourselves whether it is worth it.}
\end{remark}
	
\begin{remark}
	Recall that there are two natural maximal EKR-sets of flags in $\PG(2m-1,q)$, conjectured to be the only ones \cite{DBMM22}. These are the sets of all flags $(U_1,\dots,U_{2m-1})$ such that $U_m$ contains a fixed point or dually, is contained in a fixed hyperplane. We can call these maximal EKR-sets of `point-type' and of `hyperplane-type'. One could have started by choosing $J = [n] \setminus \{n\}$. This leads to the same computations and gives eigenvectors which are constants on the orbits of $P_J$, which now is the stabilizer of a hyperplane. This reflects algebraically the two families of maximal EKR-sets of flags. 
	
	With the choice we made above, the EKR-sets of point-type are easily described. Let $S$ be such a set with respect to the point $X$, then $\frac{|\cC|}{|S|}\textbf{1}_S = \textbf{1}_\cC + (F_m)_X$, where $(F_m)_X$ denotes the column of $F_m$ indexed by $X$. 
	
	Interestingly, we can partition the types depending on whether it is at most $m$, or at least $m+1$. In this way, we can take a $2 \times 2$ quotient matrix of $Q$ to obtain the matrix
	\[Q' = \begin{pmatrix}
		0 & q^{m(2m-1)} \\ q^{2m(m-1)} & q^{m(2m-1)}-q^{2m(m-1)} 
	\end{pmatrix}.\]
	Its eigenvector for $\lambda_{\min}$ is $(q^m,-1)^\top$ and will lift to the same eigenvectors as the columns of $F_m$, whose importance we described the paragraph before.
\end{remark}

\subsection{Type $B_n$: module $M_{([n-1],[1])}$}

Fix a polar space $\PS(n,e,q)$ in \Cref{table:classicalgroups} of rank $n$ and type $e$ defined over $\Fq$. We will again define a quotient matrix of its opposition graph on maximal flags by considering the stabilizer of point $X$. The motivations for this choice are the same as in type $A_n$: low number of orbits on $\cC$, well-understood association scheme on the points and instances of maximal EKR-sets with this automorphism group. This time the action of the parabolic subgroup $P_J$, $J = [n] \setminus \{1\}$, has three orbits on points: $\{X\}$, the points collinear to $X$ and the points opposite to $X$. The corresponding permutation character decomposes as follows (see the branching rule for type $B_n$ \cite[6.1.9]{GP00}):
\[\mathrm{ind}^W_{W_J}(1_{W_J}) = \chi_{([n],\emptyset)} + \chi_{([n-1,1],\emptyset)} + \chi_{([n-1],[1])}.\]

We conclude that this choice has the potential to give us the eigenvectors contained in the module indexed by $([n-1],[1])$.

\noindent \textbf{Step 1.} Let $J = [n] \setminus \{1\}$. Then $G/P_J$ can be identified with the points of $\PS(n,e,q)$ and we will denote this set again as $\cP$. 

As before, we will consider the orbits of $P_J$ on $\cC$. If $U$ is a subspace contained in $\PS(n,e,q)$, then as usual, $U^\perp$ denotes the set of points that are collinear to all points in $U$. A classical fact from the theory of polar spaces states that if $U$ has rank $i$, then the quotient space $U^\perp / U$ is isomorphic to $\PS(n-i,e,q)$. Recall that the rank of a subspace in $\PS(n,e,q)$ equals its vector dimension.

\begin{defin}
	Let $c = (U_1,\dots,U_n)$ be a maximal flag in $\PS(n,e,q)$ and $X \in \cP$. The \textbf{type} of $c$ w.r.t.\ $X$ is the smallest index $k$ such that $X$ is contained in the $k$-th subspace in the chain $U_1\subseteq \dots \subseteq U_n\subseteq U_{n-1}^\perp \subseteq \dots\subseteq U_1^\perp$ and $2n$ otherwise. 
\end{defin}

Throughout we will use that if $c = (U_1,\dots,U_n)$ is of type $i$ w.r.t.\ $X$ and $i > n$, then this is equivalent to $X \in U_{2n-i}^\perp \setminus U_{2n-i+1}^\perp$. 

\begin{lemma}\label{lem:typeBsizeofCiX}
	For all $i \in [2n]$ and $X \in \cP$ we have 
	\begin{align*} 
		|\cC_i^X| = \begin{cases}
			c(n-1,e)q^{i-1} &\text{if } i\le n, \\
			c(n-1,e)q^{i+e-2} &\text{if } i>n.
		\end{cases}
	\end{align*}
\end{lemma}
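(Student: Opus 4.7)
The plan is to derive the formula via a double-counting argument on the set of pairs $(c,Y)$, where $c$ is a maximal flag, $Y$ is a point of $\PS(n,e,q)$, and $c$ has type $i$ with respect to $Y$. Since $G$ acts transitively on $\cP$, the size $|\cC_i^Y|$ is independent of $Y$, so counting pairs in two ways yields
\[
|\cP|\cdot|\cC_i^X| = |\cC|\cdot N(c,i),
\]
where $N(c,i)$ is the number of points $Y$ for which a fixed maximal flag $c=(U_1,\dots,U_n)$ has type $i$ with respect to $Y$. A maximal flag is determined by its point $U_1$ together with a maximal flag in the residue $U_1^\perp/U_1\cong\PS(n-1,e,q)$, so $|\cC|=|\cP|\cdot c(n-1,e)$, and the claim reduces to showing $N(c,i)=q^{i-1}$ for $i\le n$ and $N(c,i)=q^{i+e-2}$ for $i>n$.

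For $i\le n$, the defining condition is $Y\in U_i\setminus U_{i-1}$. Since $U_i$ and $U_{i-1}$ are projective subspaces of ranks $i$ and $i-1$, this set has cardinality $(q^i-q^{i-1})/(q-1)=q^{i-1}$, as desired.

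For $i>n$, write $k=2n-i\in\{0,1,\dots,n-1\}$, so the condition becomes $Y\in U_k^\perp\setminus U_{k+1}^\perp$. I would work in the residue $\Pi:=U_k^\perp/U_k\cong\PS(n-k,e,q)$. Because $U_k\subset U_{k+1}\subset U_{k+1}^\perp$, the projection $U_k^\perp\setminus U_k\to\Pi$ sends $U_k^\perp\setminus U_{k+1}^\perp$ onto exactly the set of points of $\Pi$ that are not collinear to the image $\bar U_{k+1}$ of $U_{k+1}$, and each such point of $\Pi$ has a fiber of size $q^k$, namely the complement of $U_k$ inside the rank $k+1$ subspace $\langle Y,U_k\rangle$.

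The final ingredient is the identity that the number of points of $\PS(m,e,q)$ not collinear to a given point equals $q^{2m+e-2}$; using $|\PS(m,e,q)|=(q^m-1)(q^{m+e-1}+1)/(q-1)$ and $|P^\perp|=1+q\,|\PS(m-1,e,q)|$ for a point $P$, this boils down to a routine polynomial identity in $q$. Substituting $m=n-k$ then yields $N(c,i)=q^k\cdot q^{2(n-k)+e-2}=q^{2n-k+e-2}=q^{i+e-2}$, as required. I expect no serious obstacle: the only thing to be careful about is the clean identification of the fiber structure of $U_k^\perp\setminus U_k\to\Pi$ and the fact that collinearity in $U_k^\perp$ is faithfully recorded by collinearity in the quotient, both of which are standard consequences of the residue structure of polar spaces.
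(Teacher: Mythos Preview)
Your proposal is correct and follows essentially the same approach as the paper: both arguments double-count pairs $(c,Y)$ with $c\in\cC_i^Y$, compute the easy case $i\le n$ directly from $|U_i\setminus U_{i-1}|=q^{i-1}$, and for $i>n$ pass to the quotient polar space $U_k^\perp/U_k\cong\PS(n-k,e,q)$ (with $k=2n-i$) to identify the relevant points as those opposite to the image of $U_{k+1}$, counted via the standard formula $q^{2(n-k)+e-2}$ together with the fiber factor $q^k$. The only cosmetic difference is that the paper quotes this last count as $d(i-n,e,1)$ from its \Cref{lem:typeBnotation}(d), whereas you rederive it from the point count of $\PS(m,e,q)$ and $|P^\perp|$.
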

\begin{proof}
	First assume that $i \leq n$. Then consider all pairs $(d,Y)$ of maximal flags $d=(V_1,\dots,V_n)$ and points $Y$ such that $Y\in V_{i}\setminus V_{i-1}$. Each maximal flag occurs in $q^{i-1}$ such pairs, so there are $c(n,e)q^{i-1}$ pairs. By symmetry all points $Y$ occur in the same number of pairs and hence every point occurs in $c(n,e)q^{i-1}/v(n,e)=c(n-1,e)q^{i-1}$ pairs.
	
	On the other hand if $i > n$, we consider all pairs $(d,Y)$ of maximal flags $d=(V_1,\dots,V_n)$ and points $Y$ such that $Y\in V_{2n-i}^\perp\setminus V_{2n-i+1}^\perp$. Fix a flag $d$ and consider the quotient space $V_{2n-i}^\perp/V_{2n-i} \cong \PS(i-n,e,q)$. We observe that $V_{2n-i+1}/V_{2n-i}$ is a point $Z$ in the quotient space and every point in the quotient space opposite to $Z$ corresponds to $q^{2n-i}$ points $Y$ with $Y\in V_{2n-i}^\perp\setminus V_{2n-i+1}^\perp$. This implies that each maximal flag occurs in $d(i-n,e,1)q^{2n-1} = q^{i+e-2}$ pairs, so there are $c(n,e)q^{i+e-2}$ pairs. By symmetry all points $Y$ occur in the same number of pairs and hence every point occurs in $c(n,e)q^{i+e-2}/v(n,e)=c(n-1,e)q^{i+e-2}$ pairs.
\end{proof}

We can transfer the notation from the previous section and collect some well-known facts about polar spaces and introduce some notation for the results to come. In the rest of this section we fix a finite classical (non-degenerate) polar spaces $\PS(n,e,q)$ of rank $n \geq 2$ and type $e$ defined over $\Fq$. We will therefore omit $q$ mostly as an index or parameter.

\begin{lemma}\label{lem:typeBnotation}
	Let $\PS(n,e,q)$ be a polar space.
	\begin{enumerate}\renewcommand{\labelenumi}{(\alph{enumi})}
		\item The number of points in $\PS(n,e,q)$ is
		\begin{align*}
			v(n,e):=\frac{q^n-1}{q-1}(q^{n-1+e}+1).
		\end{align*}
		\item The number of maximal flags in $\PS(n,e,q)$ is
		\begin{align*}
			c(n,e):=\prod_{i=1}^n v(i,e).
		\end{align*}
		
		\item The number of points opposite to two points $X$ and $Y$ is
		\begin{align*}
			\begin{cases}
				\alpha(n,e):=q^{2n+e-2} & \text{if }  X=Y,
				\\
				\beta(n,e):=(q-1)q^{2n+e-3}+q^{n-2}(q^e-q) & \text{if } $X$ \text{ and } $Y$ \text{ are opposite,}
				\\
				\gamma(n,e):=(q-1)q^{2n+e-3} & \text{otherwise}.
			\end{cases}
		\end{align*}
		\item Every rank $i$ subspace is opposite to $d(n,e,i):=q^{2i(n-i)+ie+{i\choose 2}}$ rank $i$ subspaces.
	\end{enumerate}
\end{lemma}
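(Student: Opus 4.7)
The plan is to prove the four parts separately, each by a direct counting argument that exploits the standard cone-and-quotient structure of polar spaces: for any totally isotropic subspace $W$ of rank $k$, the intersection $W^\perp \cap \PS(n,e,q)$ is a cone with vertex $W$ over the quotient polar space $W^\perp/W \cong \PS(n-k,e,q)$, and so has $\frac{q^k-1}{q-1} + q^k v(n-k,e)$ points. All the counts below rest on this single formula.

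Parts (a) and (b) are essentially standard. I would obtain (a) by induction on $n$, for instance by fixing a point $X$ and using the cone formula at $k=1$ together with a symmetric count of lines to set up a recursion for $v(n,e)$ with the claimed closed form as its unique solution. Part (b) is then immediate: a maximal flag is built one step at a time, and given $U_1 \subset \cdots \subset U_{k-1}$ the rank-$k$ extension $U_k/U_{k-1}$ is a point of $U_{k-1}^\perp/U_{k-1} \cong \PS(n-k+1,e,q)$, giving $v(n-k+1,e)$ choices, so that $c(n,e) = \prod_{i=1}^n v(i,e)$ after telescoping.

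For (c) I would apply inclusion-exclusion to $X^\perp \cup Y^\perp$ in each subcase. When $X=Y$, the cone formula at $k=1$ immediately yields $\alpha(n,e) = v(n,e) - 1 - qv(n-1,e)$. When $X$ and $Y$ are collinear and span a totally isotropic line $M$, the cone formula at $k=2$ gives $|M^\perp \cap \PS| = (q+1) + q^2 v(n-2,e)$, and inclusion-exclusion yields $\gamma(n,e)$. When $X$ and $Y$ are opposite, their span $L$ is a non-degenerate hyperbolic line with $L\cap L^\perp = 0$, so $L^\perp$ inherits the structure of $\PS(n-1,e,q)$ and contributes $v(n-1,e)$ points to $X^\perp \cap Y^\perp \cap \PS$ (none of them in $L$); this gives $\beta(n,e) = v(n,e) - 2 - (2q-1)v(n-1,e)$. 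In each case the only remaining work is the algebraic simplification of these expressions in $v(n,e), v(n-1,e), v(n-2,e)$ to the stated closed form, which is routine but tedious.

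Part (d) I expect to be the most delicate. My plan is to count ordered dual bases: fix a basis $(u_1,\dots,u_i)$ of $U$; for any rank-$i$ subspace $V$ opposite to $U$ there is a unique basis $(v_1,\dots,v_i)$ of $V$ with $\langle u_j,v_k\rangle = \delta_{jk}$, so it suffices to count such tuples $(v_1,\dots,v_i)$ with each $v_k$ isotropic and $\langle v_j,v_k\rangle = 0$ for $j<k$, and then divide by $|\GL_i(\Fq)|$. Building the $v_k$ sequentially, after $k-1$ steps the admissible $v_k$ lies in a specified affine coset of $\langle u_1,\dots,u_i,v_1,\dots,v_{k-1}\rangle^\perp$ modulo $\langle v_1,\dots,v_{k-1}\rangle$, a polar space of smaller rank, and the condition that $v_k$ be isotropic within this coset contributes a polar-space count together with a power of $q$. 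Aggregating these contributions should yield the exponent $2i(n-i) + ie + \binom{i}{2}$. As a sanity check, one may alternatively recognize this exponent as $\ell(w)$ for the longest minimal representative of the appropriate double coset in $W_J \backslash W / W_J$ with $W = B_n$ and $J = [n] \setminus \{i\}$, which provides an independent building-theoretic verification. The main obstacle is the careful bookkeeping of the exponents appearing at each stage of the sequential construction.
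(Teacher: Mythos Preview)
Your proposal is correct, but it differs markedly in spirit from the paper's own proof. The paper treats all four parts as standard facts and dispatches them by citation: (a) and (b) are referred to \cite[Lemma 9.4.1]{BCN89}; (c) is observed to be a consequence of the strong regularity of the point-opposition graph, whose parameters are quoted from \cite[Theorem 2.2.12]{BVM22}; and (d) is identified as the valency of the opposition graph on rank-$i$ subspaces and attributed to \cite[Proposition 3.1]{Brouwer10} or \cite[Proposition 3.22]{DBMM22}. In other words, the paper gives no direct argument at all.

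Your approach, by contrast, supplies self-contained counts. The cone-and-quotient argument for (a) and (b) is the standard one underlying the cited results; your inclusion--exclusion for (c) is exactly the computation that recovers the strongly regular graph parameters (and indeed the three expressions $v(n,e)-1-qv(n-1,e)$, $v(n,e)-2-(2q-1)v(n-1,e)$, and the collinear variant simplify to the stated $\alpha,\beta,\gamma$). For (d), your dual-basis count is one of the classical derivations of the valency, and your remark that the exponent equals $\ell(w)$ for the appropriate double-coset representative is precisely the building-theoretic viewpoint of \cite{Brouwer10} and \cite{APVM17} that the paper alludes to elsewhere. So what you gain is a self-contained treatment; what the paper gains is brevity, since in the context of the paper these are background facts rather than new content.
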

\begin{proof}
	(a) and (b) can be deduced from \cite[Lemma 9.4.1]{BCN89}. 
	
	(c) can be seen from the fact that the opposition graph on points of $\PS(n,e,q)$ is strongly regular. It is the complement of the collinearity graph, whose parameters can be found from \cite[Theorem 2.2.12]{BVM22}
	
	(d) is the valency of the opposition graph on rank $i$ subspaces. This value can for example be found in \cite[Proposition 3.1]{Brouwer10} or \cite[Proposition 3.22]{DBMM22}.
\end{proof}

\noindent \textbf{Step 2.} Now let $Q$ be the $2n \times 2n$ quotient matrix of the opposition graph with respect to the action of $P_J$. Recall that the value $Q_{ij}$ is the number of flags of type $j$ opposite to a given flag of type $i$.

Next we prepare some calculations for the computation of the quotient matrix. The double counting proof as in type $A_n$ does not work since the number of points of $\cP$ contained in $(U_{2n-i}^\perp \setminus U_{2n-i+1}^\perp) \cap (V_{2n-j}^\perp \setminus V_{2n-j+1}^\perp)$ is not so straightforward to count. Instead we will employ a direct method: given a point $X$ and $c \in \cC_i^X$ we will construct all flags $d \in \cC_j^X$ opposite to $c$ and count the number of ways we could have done so. The following lemma contains the crux of these computations.

\begin{lemma}\label{lem:typeBconstructionofoppositeflag}
	Let $1\le i\le 2n$ and $n < j \leq 2n$ with $i+j \geq 2n+1$, $X \in \cP$ and $c=(U_1,\dots,U_n) \in \cC^X_i$.
	\begin{enumerate}\renewcommand{\labelenumi}{(\alph{enumi})}
		\item The number of $(2n-j)$-subspaces $A$ with $X\in A^\perp$ and $A^\perp\cap U_{2n-j}=\emptyset$ is $d(n-1,e,2n-j) q^{2n-j}$.
		\item Given a subspace $A$ as in (a), then the number of $(2n-j+1)$-subspaces $B$ with $A\subseteq B$ and $X\notin B^\perp$ and $B^\perp\cap U_{2n-j+1}=\emptyset$ is
		\[\begin{cases}
			\alpha(j-n,e) & \text{if } i+j=2n+1, \\
			\beta(j-n,e) & \text{if } i=j, \\
			\gamma(j-n,e) & \text{otherwise.}
		\end{cases}\]
		This number equals $(q-1+\delta_{i+j,2n+1})q^{2(j-n)+e-3}+\delta_{ij}q^{j-n-2}(q^e-q)$.
		\item Given  $A,B$ as in (b), then $\{A,B\}$ extends to $q^{{2n-j \choose 2} + (j-n-1)(j-n+e-2)}$ maximal flags opposite to $c$.
	\end{enumerate}
\end{lemma}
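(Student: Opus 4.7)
The strategy is to count, directly and geometrically, the maximal flags $d=(V_1,\dots,V_n)$ opposite to $c$ and of type $j$ w.r.t.\ $X$, by building them up from $A=V_{2n-j}$ and $B=V_{2n-j+1}$ (the distinguished rungs that pin down the type) and then extending on both sides. The key conceptual ingredient is that opposition of two maximal flags $(U_1,\dots,U_n)$ and $(V_1,\dots,V_n)$ in a rank $n$ polar space is equivalent to $V_k^\perp\cap U_k=\emptyset$ for every $k\in[n]$; this is the source of the perp-intersection conditions in (a) and (b). Before anything, observe that the hypothesis $i+j\ge 2n+1$ forces $X\notin U_{2n-j}$: if $i\le n$, then $i>2n-j$ and so $X\in U_i\setminus U_{i-1}$ sits strictly above $U_{2n-j}$, while if $i>n$ then $X\notin U_n\supseteq U_{2n-j}$.

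For (a), the plan is to pass to the quotient polar space $\bar X^\perp := X^\perp/\langle X\rangle\cong \PS(n-1,e,q)$. Since $X\notin U_{2n-j}$, the image $\bar U$ of $U_{2n-j}$ is a well-defined totally isotropic subspace there. Split the candidate subspaces $A$ by whether $X\in A$ or not: in each case $A$ corresponds to a totally isotropic subspace of $\bar X^\perp$ of rank either $2n-j$ or $2n-j-1$ which is opposite to $\bar U$, and \Cref{lem:typeBnotation}(d) counts these. Tracking the lifting factors from $\bar X^\perp$ back to $X^\perp$ then yields $d(n-1,e,2n-j)\,q^{2n-j}$.

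For (b), given $A$ from (a), I would work in the quotient polar space $A^\perp/A\cong \PS(j-n,e,q)$. A choice of $B$ with $A\subsetneq B$ of rank $2n-j+1$ is the same as a point $\bar B$ of $A^\perp/A$; the condition $X\notin B^\perp$ becomes $\bar B\ne \bar X$ (where $\bar X$ is the image of $X$, well defined because $X\in A^\perp$), and the condition $B^\perp\cap U_{2n-j+1}=\emptyset$ becomes "$\bar B$ opposite to $\bar W$", where $\bar W$ is the image of $U_{2n-j+1}$, a point of $\PS(j-n,e,q)$. The three cases of the formula correspond precisely to the mutual position of $\bar X$ and $\bar W$: equal when $i+j=2n+1$, opposite when $i=j$, and collinear-but-distinct otherwise. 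In each case, the number of points $\bar B$ that are opposite to $\bar W$ and distinct from $\bar X$ is exactly $\alpha(j-n,e),\beta(j-n,e),\gamma(j-n,e)$ respectively, by \Cref{lem:typeBnotation}(c).

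For (c), a maximal flag extending $A\subsetneq B$ decomposes as a complete flag inside $A\cong \PG(2n-j-1,q)$ together with a complete flag inside $B^\perp/B\cong \PS(j-n-1,e,q)$. The residual opposition conditions inherited from $c$ translate, via the identifications set up in (a) and (b), to opposition relative to induced flags in these two smaller geometries. Since the number of maximal flags opposite to a given one in $\PG(r-1,q)$ is $q^{\binom{r}{2}}$, and in $\PS(m,e,q)$ is $q^{m(m+e-1)}$, substituting $r=2n-j$ and $m=j-n-1$ yields the announced product. The main obstacle I anticipate is verifying cleanly that the induced flags in $A$ and in $B^\perp/B$ are indeed in general position relative to the $V_k$'s being chosen; this requires unpacking $V_k^\perp\cap U_k=\emptyset$ for $k\neq 2n-j,2n-j+1$ and showing it is equivalent to the corresponding opposition condition after descent to the quotient. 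The equalities $A^\perp\cap U_{2n-j}=\emptyset$ and $B^\perp\cap U_{2n-j+1}=\emptyset$, established in (a) and (b), are precisely what guarantees that the induced partial flags in $A$ and $B^\perp/B$ are full maximal flags (rather than degenerate), so that the standard opposite-flag counts apply.
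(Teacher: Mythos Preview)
Your overall architecture matches the paper's: quotient by $X$ for (a), quotient by $A$ for (b), and split the residual extension in (c) into a type $A$ and a type $B$ piece. Part (c) is fine and is exactly what the paper does (the paper phrases it via a citation to Brouwer, but the content is the same computation $q^{\binom{2n-j}{2}}\cdot q^{(j-n-1)(j-n+e-2)}$).

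There is, however, a genuine slip in (b). You write that in $A^\perp/A$ the condition $X\notin B^\perp$ becomes $\bar B\neq \bar X$. That is false: $X\notin B^\perp$ is equivalent to $B\not\subseteq X^\perp$, and since $A\subseteq X^\perp$ this descends to $\bar B\notin \bar X^{\perp}$ in $A^\perp/A$, i.e.\ $\bar B$ is \emph{opposite} to $\bar X$, not merely distinct from it. With your stated condition ``opposite to $\bar W$ and distinct from $\bar X$'' the counts would be $\alpha$, $\alpha-1$, $\alpha$ in the three cases, which is wrong. What saves your conclusion is that the quantities $\alpha(j-n,e)$, $\beta(j-n,e)$, $\gamma(j-n,e)$ you invoke from \Cref{lem:typeBnotation}(c) are by definition the number of points opposite to \emph{both} given points; so the formula you land on is correct, but the justification as written does not match it. Replace ``$\bar B\neq\bar X$'' by ``$\bar B$ opposite to $\bar X$'' and the argument is the paper's.

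A minor point on (a): the case split ``$X\in A$ or not'' is harmless but unnecessary. If $X\in A$ then in $X^\perp/X$ the image $\bar A$ has rank $2n-j-1$ while $\bar U$ has rank $2n-j$, and a codimension count shows $\bar A^{\perp}\cap\bar U\neq\emptyset$, so this case is vacuous (and ``opposite'' between subspaces of different rank is not defined anyway). The paper simply observes $X\notin A$ directly and then, exactly as you suggest, counts rank $2n-j$ subspaces of $X^\perp/X$ opposite to $\bar U$ and lifts each through the $q^{2n-j}$ hyperplanes of the preimage not containing $X$.
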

\begin{proof}
	(a)\, Since $i>2n-j$, we see that $X\notin U_{2n-j}$ and $X\in U_{2n-j}^\perp$. Hence if $A$ is a subspace as wanted, the subspaces $\erz{X,U_{2n-j}}/X$and $\erz{X,A}/X$ are opposite subspaces of rank $2n-j$ in the quotient space $X^\perp/X \cong \PS(n-1,e,q)$. In this polar space, the number of $(2n-j)$-subspaces $C/X$ opposite to $\erz{X,U_{2n-j}}/X$ is $d(n-1,e,2n-j)$. For any such $C$, we have that $C$ is a subspace of rank $2n-j+1$ of the original polar space. It follows that there are $q^{2n-j}$ hyperplanes in $C$ not containing $X$, each of which is a suitable possibility for $A$. In conclusion we have $d(n-1,e,2n-j) q^{2n-j}$ possibilities for $A$ in total. \\
	
	(b)\, Since $B^\perp\subseteq A^\perp$ and $A^\perp \cap U_{2n-j}=\emptyset$, we see that $Y:=U_{2n-j+1}\cap A^\perp$ is a point. The requirement $B^\perp\cap U_{2n-j+1}=\emptyset$ then means that $B$ is not contained in $Y^\perp$. Hence, we look for $(2n-j+1)$-subspaces $B$ with $A\subseteq B\subseteq A^\perp$ and $B\not\subseteq X^\perp\cup Y^\perp$. In the quotient space $A^\perp/A \cong \PS(j-n,e,q)$, we have that $B/A$ is a point, that is opposite to the points $\erz{A,X}/A$ and $\erz{A,Y}/A$. We can now count the possibilities for $B$ using \Cref{lem:typeBnotation}(c) depending on the relation between $X$ and $Y$.
	
	\begin{compactenum}[-]
	\item If $i+j=2n+1$, then $U_{2n-j+1}=U_i$ and hence $X=Y$, so the number of subspaces $B$ is $\alpha(j-n,e)$;
	
	\item if $i+j>2n+1$ and $i \leq n$, then $X \in U_i \setminus U_{i-1}$ and $Y \in U_{2n-j+1} \subseteq U_{i-1}$ implies that $X$ and $Y$ are distinct and collinear points, and hence there are $\gamma(j-n,e)$ possibilities for $B$;
	
	\item if $i+j>2n+1$ and $n < i = j$, then  $X\in U_{2n-j}^\perp\setminus U_{2n-j+1}^\perp$ implies that $X$ and $Y$ are opposite points, and hence there are $\beta(j-n,e)$ possibilities for $B$;

	\item if $i+j>2n+1$ and $n < i < j$, then $X \in U_{2n-i}^\perp \supseteq U_{2n-j+1}^\perp$ implies that $X$ and $Y$ are distinct and collinear points, and hence there are $\gamma(j-n,e)$ possibilities for $B$.
	\end{compactenum} 
	The last statement now follows from \Cref{lem:typeBnotation}.

	(c) By \cite[Corollary 3.1]{Brouwer10} the number of maximal flags extending $(A,B)$ and opposite to $c$ is $q^\ell$, where $\ell$ is the length of the longest word in $W_I$, with $I = \{1,\dots,2n-j-1,2n-j+2,\dots,n\}$. It follows that $W_I \cong \Sym(2n-j) \times W_{B_{j-n-1}}$ and hence $\ell = {2n-j \choose 2} + (j-n-1)(j-n+e-2)$.
\end{proof}

\begin{prop}\label{prop:typeBquotientmatrix}
	The entries of the quotient matrix are
	\begin{align*}
		Q_{ij}/q^{n(n+e-3)-e}= \begin{cases}
			0 & \mbox{if\ }\, i+j<2n+1,
			\\
			(q-1+\delta_{i+j,2n+1})q^j & \text{if }\, i+j\ge 2n+1,\ j\le n,
			\\
			(q-1+\delta_{i+j,2n+1})q^{j+e-1}+\delta_{ij}q^n(q^e-q) & \text{if }\, i+j>2n+1,\ j>n.
		\end{cases}
	\end{align*}
\end{prop}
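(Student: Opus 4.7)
The plan is to combine the three-step construction of \Cref{lem:typeBconstructionofoppositeflag} with the double-counting relation \eqref{eq:quotientmatrixdoublecounting}, splitting into three cases depending on the position of $j$ relative to $n$.

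First I would dispose of the zero case $i+j<2n+1$. The only non-vacuous sub-case (after using \eqref{eq:quotientmatrixdoublecounting} to swap $i,j$ if necessary) is $i \leq n < j$, since $i,j\leq n$ would force $X\in U_i\cap V_j=\emptyset$ for opposite flags, and $i,j>n$ is incompatible with $i+j<2n+1$. When $i\leq n<j$ and $i+j<2n+1$, we have $i\leq 2n-j$, so $X \in U_i \subseteq U_{2n-j}$, while $d \in \cC_j^X$ requires $X \in V_{2n-j}^\perp$. This places $X$ in $V_{2n-j}^\perp \cap U_{2n-j}$, which is empty for opposite flags (exactly the geometric constraint already encoded in \Cref{lem:typeBconstructionofoppositeflag}(a)). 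Hence no such $d$ exists and $Q_{ij}=0$.

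For the main case $j>n$ with $i+j\geq 2n+1$, every flag $d$ contributing to $Q_{ij}$ is uniquely determined by its truncation to the $(2n-j)$- and $(2n-j+1)$-dimensional parts together with a completion to a maximal flag opposite to $c$. The three parts of \Cref{lem:typeBconstructionofoppositeflag} give exact counts at each stage, so $Q_{ij}$ is their product. Collecting the pure $q$-powers, using $d(n-1,e,2n-j)=q^{2(2n-j)(j-n-1)+(2n-j)e+\binom{2n-j}{2}}$ from \Cref{lem:typeBnotation}(d) and writing $a=2n-j$, $b=j-n$ with $a+b=n$, the exponent $2a(b-1)+ae+a(a-1)+a+(b-1)(b+e-2)$ simplifies after substitution to $n^2+ne-2n-b-e+2$. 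Factoring out $q^{n(n+e-3)-e}$ leaves a factor $q^{2n-j+2}$, which combines with the expression in part (b) via the identities $2n-j+2b=j$ and $2n-j+b=n$ to recover exactly $(q-1+\delta_{i+j,2n+1})q^{j+e-1}+\delta_{ij}q^n(q^e-q)$.

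Finally, for $j\leq n$ with $i+j\geq 2n+1$, the constraint forces $i>n$, so $Q_{ji}$ falls under the previous case. Applying \eqref{eq:quotientmatrixdoublecounting} with the cardinalities from \Cref{lem:typeBsizeofCiX} yields $Q_{ij}=q^{j-i-e+1}Q_{ji}$; since $i>n\geq j$ forces $\delta_{ij}=0$, substituting the formula just derived for $Q_{ji}$ produces $q^{n(n+e-3)-e}(q-1+\delta_{i+j,2n+1})q^j$, matching the claim. The main obstacle is the exponent bookkeeping in the middle case: carefully assembling four $q$-powers (three from the construction lemma, one from \Cref{lem:typeBnotation}(d)) and checking that the collapse into $q^{n(n+e-3)-e}$ times the claimed bracket proceeds without miscounts. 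All other steps are routine once the construction lemma is in hand.
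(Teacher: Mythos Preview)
Your proposal is correct and follows essentially the same approach as the paper's proof: use the three-part construction of \Cref{lem:typeBconstructionofoppositeflag} to compute $Q_{ij}$ directly when $j>n$, and then invoke the double-counting identity \eqref{eq:quotientmatrixdoublecounting} together with \Cref{lem:typeBsizeofCiX} to deduce the $j\leq n$ case. Your exponent bookkeeping via $a=2n-j$, $b=j-n$ is a cleaner presentation of the same simplification the paper performs, and your separate treatment of the zero case (including the easy sub-case $i,j\leq n$, which the paper folds into its $j\leq n$ discussion) is slightly more explicit but otherwise identical in substance.
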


\begin{proof}
	Fix a maximal flag $c=(U_1,\dots,U_n) \in \cC_i^X$ so that $Q_{ij}$ equals the number of maximal flags $d = (V_1,\dots,V_n) \in \cC_j^X$ opposite to $c$. 
	
	Assume first that $j > n$ so that $X \in V_{2n-j}^\perp \setminus V_{2n-j+1}^\perp$. As $c$ and $d$ are opposite it follows that $X \notin U_{2n-j}$ which implies that $Q_{ij} = 0$ if $i \leq 2n-j$. So assume $i+j \geq 2n+1$. We will now compute $Q_{ij}$ by building $d$, first by counting the possibilities for $V_{2n-j}$ and $V_{2n-j+1}$, ensuring that $d \in \cC_j^X$, and finally extending the partial flag $(V_{2n-j},V_{2n-j+1})$ to a maximal flag opposite to $c$. The computations have been done in \Cref{lem:typeBconstructionofoppositeflag} and we find
	\begin{align*}
		Q_{ij} = & \, d(n-1,e,2n-j) q^{2n-j} \cdot \\ &\cdot \left((q-1+\delta_{i+j,2n+1})q^{2(j-n)+e-3}+\delta_{ij}q^{j-n-2}(q^e-q)\right) \cdot q^{{2n-j \choose 2} + (j-n-1)(j-n+e-2)} \\
		= & \,(q-1+\delta_{i+j,2n+1})q^{n(n+e-3)+j-1}+\delta_{ij}q^{n(n+e-2)-e}(q^e-q).
	\end{align*}

	Assume now that $j \leq n$. Similarly as before, we have $X \in V_j$ and hence $X \notin U_j^\perp$, which implies that $Q_{ij} = 0$ if $i \leq 2n-j$. So we can assume that $i+j \geq 2n+1$ and hence $i > n$. By the double counting property \eqref{eq:quotientmatrixdoublecounting} of a quotient matrix, we know $|\cC_i^X|Q_{ij} = |\cC_j^X|Q_{ji}$. By \Cref{lem:typeBconstructionofoppositeflag}(a) and our previous computation, we have
	\begin{align*}
	Q_{ij} &= (q-1+\delta_{i+j,2n+1})q^{n(n+e-3)+i-1} \cdot q^{j-i-e+1} \\
	&= (q-1+\delta_{i+j,2n+1})q^{n(n+e-3)+j-e}. \qedhere
	\end{align*}
\end{proof}

\begin{exa}
	For $n = 2$ we find the quotient matrix
	\[\begin{pmatrix}
		0 & 0 & 0 & q^{2e+2} \\
		0 & 0 & q^{2e+1} & q^{2e+2}-q^{2e+1} \\
		0 & q^{e+1} & q^{2e+1}-q^{e+1} & q^{2e+2}-q^{2e+1} \\
		q^e & q^{e+1}-q^e & q^{2e+1}-q^{2e} & q^{2e+2}-q^{2e+1}+q^{2e}-q^{e+1}
	\end{pmatrix}\]
	with eigenvalues $q^{2e+2}$, $q^{2e}$ and $-q^{e+1}$ twice. Corresponding eigenvectors are $(1,1,1,1)$, $(q^2,-q,-q,1)$, $(0,q^e,-1,0)$ and $(q^{e+1},q^{e+1},-1,-1)$. We see the eigenvalues coming from the characters $\chi_{([n],\emptyset)}$, $\chi_{([n-1,1],\emptyset)}$ and $\chi_{([n-1],[1])}$, which are those appearing in the decomposition of the permutation character discussed earlier. The eigenvectors possess similar structure as in type $A_n$.
\end{exa}

\begin{lemma}
	For $i\in\{1,\dots,n\}$ define the column vector
	\[
	v_i:=(\underbrace{0,\dots,0}_{n-i},\underbrace{q^{e+i-1},\dots,q^{e+i-1}}_{i}, \underbrace{-1,\dots,-1}_{i},\underbrace{0,\dots,0}_{n-i})^\top.
	\]
	Then $v_i$ is an eigenvector of $Q$ with eigenvalue $-q^{(n-1)(n-1+e)}$.
\end{lemma}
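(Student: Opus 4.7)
The plan is a direct componentwise verification that $Qv_i=-q^{(n-1)(n+e-1)}v_i$. Writing $w_k$ for the $k$-th row of $Q$ and setting $c:=q^{n(n+e-3)-e}$, \Cref{prop:typeBquotientmatrix} tells us that $Q_{kj}=0$ unless $k+j\ge 2n+1$, and otherwise $Q_{kj}$ equals $c(q-1+\delta_{k+j,2n+1})q^j$ when $j\le n$, or $c\bigl[(q-1+\delta_{k+j,2n+1})q^{j+e-1}+\delta_{kj}q^n(q^e-q)\bigr]$ when $j>n$. Every calculation reduces to evaluating geometric sums via the telescope $(q-1)\sum_{j=a}^{b}q^{j-1}=q^{b}-q^{a-1}$, which will be used repeatedly.

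I would split the calculation along the four regions cut out by the staircase support of $v_i$:
\[
\text{(I)}\ k\le n-i,\quad \text{(II)}\ n-i<k\le n,\quad \text{(III)}\ n<k\le n+i,\quad \text{(IV)}\ k>n+i.
\]
In each case, the indices $j$ that actually contribute to $w_kv_i$ are those in $\supp(v_i)=[n-i+1,n+i]$ with $j\ge 2n+1-k$. Case (I) is immediate since the intersection is empty. Case (IV) reduces to an exact cancellation between the $q^{e+i-1}$-block and the $-1$-block after one telescope each, matching $(v_i)_k=0$. Case (II) involves only the $-1$-block: the endpoint $j=2n+1-k$ picks up the extra factor $q$ from $\delta_{k+j,2n+1}$, which combined with the telescoped bulk collapses the whole sum to $-cq^{n+i+e}$, precisely $-q^{(n-1)(n+e-1)}q^{e+i-1}$ as required.

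Case (III) is the main obstacle, as three different contributions must interact carefully: the positive block $j\in[n-i+1,n]$, the off-diagonal part of the negative block $j\in[n+1,n+i]\setminus\{k\}$, and the diagonal entry $Q_{kk}$ with its correction $c\cdot q^n(q^e-q)=cq^{n+e}-cq^{n+1}$. After executing the three telescoping sums, two pairwise cancellations are expected to occur: the $+cq^{n+i+e}$ produced by the positive block cancels the $-cq^{n+i+e}$ coming out of the off-diagonal negative block's telescope, and the $-cq^{n+e}$ piece contributed by the diagonal correction cancels the $+cq^{n+e}$ residue left behind by that same telescope. The sole survivor is the $+cq^{n+1}$ piece from the diagonal correction, and one then checks $cq^{n+1}=q^{n^2+ne-2n-e+1}=q^{(n-1)(n+e-1)}$, giving $w_kv_i=q^{(n-1)(n+e-1)}=-q^{(n-1)(n+e-1)}(v_i)_k$.

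The delicate step is recognising that the precise form of the $\delta_{kj}q^n(q^e-q)$ correction is exactly what forces a single unmatched residue to survive in case (III); without this correction the two negative-block telescopes would also cancel against the positive block and the eigenvalue equation would fail. Once the cancellation pattern is laid out, all that remains is elementary manipulation of powers of $q$, and the four cases together establish $Qv_i=-q^{(n-1)(n+e-1)}v_i$.
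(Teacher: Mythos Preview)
Your proposal is correct and follows essentially the same approach as the paper: both split the verification of $Qv_i=-q^{(n-1)(n+e-1)}v_i$ into the same four cases for $k$ and reduce each to telescoping geometric sums using \Cref{prop:typeBquotientmatrix}. Your description of the cancellation pattern in Case~(III) matches the paper's computation exactly (the sole survivor $cq^{n+1}=q^{(n-1)(n+e-1)}$), and your identification of the role of the $\delta_{kj}q^n(q^e-q)$ correction is precisely the point of the argument. One minor wording issue: your ``off-diagonal part of the negative block'' really means the $(q-1)q^{j+e-1}$ contribution summed over \emph{all} $j\in[n+1,n+i]$ (including $j=k$), with the diagonal correction treated separately---but the arithmetic you then describe is consistent with this reading.
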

\begin{proof}
	Let $w_1,\dots,w_{2n}$ be the row vectors of $Q$. We have to show $w_kv_i=-q^{(n-1)(n-1+e)} (v_i)_k$ for all $1 \leq i,k \leq 2n$, where $(v_i)_k$ is the $k$-th entry of $v_i$. We distinguish four cases.
	
	\noindent Case 1. $k\le n-i$. Then $(v_i)_k=0$. Since the first $2n-k$ entries of $w_k$ and the last $n-i$ entries of $v_i$ are zero, we have $w_kv_i=0$.
	
	\noindent Case 2. $n-i<k\le n$. Then $(v_i)_k=q^{e+i-1}$. Since the first $2n-k$ entries of $w_k$ are zero and the last $n-i$ entries of $v_i$ are zero, we have
	\begin{align*}
		w_kv_i&=-\sum_{\ell=2n-k+1}^{n+i}Q_{k\ell}
		=-q^{n(n+e-3)-e}\left(q^{2n-k+e}+\sum_{\ell=2n-k+1}^{n+i}(q-1)q^{\ell+e-1}\right)
		=-q^{n(n+e-2)+i}.
	\end{align*}
	
	\noindent Case 3. $n<k\le n+i$ (that is $n-i\le 2n-k$). Then $(v_i)_k=-1$ and
	\begin{align*}
		w_kv_i&=\sum_{\ell=2n-k+1}^{n}q^{e+i-1}Q_{k\ell}-\sum_{\ell=n+1}^{n+i}Q_{k\ell}
		\\&=
		q^{n(n+e-3)+i-1}\left(q^{2n+1-k}+\sum_{\ell=2n-k+1}^{n}(q-1)q^\ell\right)
		\\&\ \ \ -
		q^{n(n+e-3)-e}\left(q^{n}(q^e-q)+\sum_{\ell=n+1}^{n+i}(q-1)q^{\ell-1+e}\right)
		\\&=
		q^{n(n+e-3)+i-1}q^{n+1}
		-
		q^{n(n+e-3)-e}(-q^{n+1}+q^{n+e+i})
		=
		q^{n(n+e-2)-e+1}.
	\end{align*}
	
	\noindent Case 4. $n+i<k$ (and hence $n-i+1 > 2n+1-k$). Then $(v_i)_k=0$ and
	\begin{align*}
		w_kv_i&=\sum_{\ell=n-i+1}^{n}q^{e+i-1}Q_{k\ell}-\sum_{\ell=n+1}^{n+i}Q_{k\ell}
		\\&=
		q^{n(n+e-3)+i-1}\sum_{\ell=n-i+1}^{n}(q-1)q^\ell
		-
		q^{n(n+e-3)}\sum_{\ell=n+1}^{n+i}(q-1)q^{\ell-1})
		=0.
	\end{align*}
	In all cases we see that $w_kv_i=-q^{(n-1)(n-1+e)}(v_i)_k$.
\end{proof}

\begin{defin}\label{def:typeBmatrixofeigenvectors}
	For each $j \in [n]$ we can define the $|\cC| \times |\cP|$ matrix $F_j$ as
	\[F_j = q^{j+e-1}\sum_{i=n-j+1}^{n} T_i-\sum_{i=n+1}^{n+j} T_i,\]
	where $T_i$ denotes the type $i$ adjacency matrix.
\end{defin}

\noindent \textbf{Step 3.} We will now prove that the columns of $F_j$, $j \in [n]$, span the part of the $\lambda_{\min}$-eigenspace of $A_{w_0}$ contained in $M_{([n-1],[1])}$.

\begin{lemma}\label{lem:typeBbetas} 
	For all $i,j \in [n]$ we have
	\[|\cC_i^X \cap \cC_j^Y| = \begin{cases}
		\delta_{ij}c(n-1,e)q^{i-1} & \text{if } X = Y, \\
		c(n-2,e)q^{i-2}(q^{j-1}-\delta_{ij}) & \text{if $X$ and $Y$ collinear}, \\
		0 & \text{if $X$ and $Y$ opposite.}
	\end{cases}\]
	For all $i \in [n]$ and $j \in \{n+1,\dots,2n\}$ we have
	\[|\cC_i^X \cap \cC_j^Y| = \begin{cases}
		0 & \text{if } X = Y, \\
		c(n-2,e)q^{i+j+e-4} & \text{if $X$ and $Y$ collinear and $i+j < 2n+1$}, \\
		0 & \text{if $X$ and $Y$ collinear and $i+j = 2n+1$}, \\
		c(n-2,e)q^{i+j+e-5} & \text{if $X$ and $Y$ collinear and $i+j > 2n+1$}, \\
		0 & \text{if $X$ and $Y$ opposite and $i+j < 2n+1$.} \\
		c(n-1,e) & \text{if $X$ and $Y$ opposite and $i+j = 2n+1$.} \\
		c(n-1,e)(q-1)q^{i+j-2n-2} & \text{if $X$ and $Y$ opposite and $i+j > 2n+1$.}
	\end{cases}\]
\end{lemma}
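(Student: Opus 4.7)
The approach mirrors \Cref{lem:typeAbetas}: handle the case $X = Y$ directly from the definition of type, and use double counting for $X \neq Y$. When $X = Y$, a maximal flag has a unique type with respect to any fixed point, so for $i,j \le n$ the intersection is $\delta_{ij}|\cC_i^X|$, where the value of $|\cC_i^X|$ is given by \Cref{lem:typeBsizeofCiX}. For the mixed case $i \leq n$, $j > n$ (with $X = Y$), types $\le n$ and $>n$ are distinguished by whether $X$ lies in $U_n$ or not, so the intersection is empty.

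For $X \neq Y$, I fix $X$ and count pairs $(Y,c)$ with $Y$ in the prescribed relation to $X$ and $c \in \cC_i^X \cap \cC_j^Y$ in two ways. Writing $N$ for the number of points $Y$ bearing the given relation to $X$ (namely $q\, v(n-1,e)$ for collinear distinct from $X$, and $\alpha(n,e)=q^{2n+e-2}$ for opposite, by \Cref{lem:typeBnotation}), and $M_c$ for the number of valid $Y$ given a fixed $c \in \cC_i^X$, symmetry gives
\[ |\cC_i^X \cap \cC_j^Y|\cdot N = |\cC_i^X|\cdot M_c. \]
When $j \le n$, the constraint is $Y \in U_j \setminus U_{j-1}$: in the collinear case, $U_j \subseteq U_n \subseteq X^\perp$ is automatically collinear to $X$, so $M_c = q^{j-1}-\delta_{ij}$; in the opposite case, $Y \in U_j \subseteq X^\perp$ contradicts $Y \not\perp X$, so $M_c = 0$. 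Dividing by $N$ yields the stated formulas.

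The substantive work is the case $j > n$, where the constraint becomes $Y \in U_k^\perp \setminus U_{k+1}^\perp$ for $k = 2n - j$. The key identity, obtained by counting points in the quotient polar space $U_k^\perp/U_k \cong \PS(n-k,e,q)$, is
\[ |U_k^\perp \setminus U_{k+1}^\perp| = q^k \cdot \alpha(n-k,e) = q^{2n-k+e-2} \]
for any totally singular chain $U_k \subset U_{k+1}$ of consecutive ranks. The analysis then splits by the position of $X$ relative to $U_k$ and $U_{k+1}$, controlled by the sign of $(i+j)-(2n+1)$. Setting $W := \langle U_k, X\rangle$ (of rank $k+1$ whenever $X \notin U_k$, and totally singular because $X \in U_n \subseteq U_k^\perp$), the collinear-to-$X$ subset of $U_k^\perp \setminus U_{k+1}^\perp$ coincides with $W^\perp \setminus \langle W, U_{k+1}\rangle^\perp$, which the same identity evaluates as $q^{2n-k+e-3}$. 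Combining these counts with the three positions $X \in U_k$, $X \in U_{k+1}\setminus U_k$, and $X \notin U_{k+1}$ gives $M_c$ in the six sub-cases, and dividing by $N$ produces the stated values.

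The main obstacle is the bookkeeping in the mixed case $i \le n$, $j > n$: one must carefully locate $X$ in the chain, verify that $X \in U_k^\perp$ and $X \in U_{k+1}^\perp$ (always true since the $U_\ell$'s are totally singular and $X \in U_i \subseteq U_n$), and then correctly identify the subset of $U_k^\perp \setminus U_{k+1}^\perp$ compatible with the opposite/collinear constraint on $Y$. Each of the three sub-cases parametrized by the sign of $(i+j)-(2n+1)$ must be verified individually, but after the introduction of $W$ and $\langle W, U_{k+1}\rangle$ each reduces to a single application of the same identity.
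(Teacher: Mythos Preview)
Your argument is correct. For the first block ($i,j\le n$) you do exactly what the paper does. For the second block ($i\le n<j$), however, you take the dual double count: you fix $X$ and vary $(Y,c)$, whereas the paper fixes $Y$ and varies $(X,c)$. In the paper's version one starts from $c\in\cC_j^Y$, so $Y\in U_{2n-j}^\perp\setminus U_{2n-j+1}^\perp$, and then counts admissible $X\in U_i\setminus U_{i-1}$; since $Y^\perp$ meets each $U_\ell$ in all of $U_\ell$ or in a hyperplane, this is pure linear algebra and no quotient--polar--space identity is needed. Your version keeps the role of $X$ fixed throughout (a pleasant uniformity with the first block) but pays for it by needing the identity $|A^\perp\setminus B^\perp|=q^{2n-\rk(A)+e-2}$ and the auxiliary space $W=\langle U_k,X\rangle$.

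One presentational point: your sentence ``\dots which the same identity evaluates as $q^{2n-k+e-3}$'' is literally correct only in the sub-case $i+j>2n+1$, where $\rk(W)=k+1$ and $\rk\langle W,U_{k+1}\rangle=k+2$. When $X\in U_k$ one has $W=U_k$ and the identity returns $q^{2n-k+e-2}$ (all of $U_k^\perp\setminus U_{k+1}^\perp$ is collinear with $X$), and when $X\in U_{k+1}\setminus U_k$ one has $W=\langle W,U_{k+1}\rangle=U_{k+1}$ and the set is empty. You clearly intend this, since you immediately invoke the three positions of $X$, but it would read better to record the three collinear counts $q^{2n-k+e-2},\,0,\,q^{2n-k+e-3}$ explicitly and then obtain the opposite counts as their complements in $q^{2n-k+e-2}$. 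With that clarification all six values of $M_c$ drop out and division by $N\in\{qv(n-1,e),\,q^{2n+e-2}\}$ gives the stated formulas.
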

\begin{proof}
	Let $i \in [n]$ and assume that $j \in [n]$. If $c = (U_1,\dots,U_n) \in \cC_i^X \cap \cC_j^Y$ then $X,Y \in U_n$ as $i,j \leq n$. This implies that either $X = Y$ or $X$ and $Y$ are collinear. When $X = Y$, we immediately see that $|\cC_i^X \cap \cC_j^Y| = \delta_{ij}|\cC_i^X|$ as a flag has only one type and we can use \Cref{lem:typeBsizeofCiX}. So fix $X \in \cP$ and count the pairs $(Y,c)$ such that $X$ and $Y$ are collinear and $c \in \cC_i^X \cap \cC_j^Y$ in two ways. There are $|\cC_i^X|=c(n-1,e)q^{i-1}$ possibilities for $c$ and for every such $c$ there are $q^{j-1}-\delta_{ij}$ ways to pick $Y$, as every possibility will give a point collinear to $X$. The $\delta_{ij}$ appears since $Y$ has to be distinct from $X$. On the other hand, for every of the $qv(n-1,e)$ points collinear with $Y$, there are equally many possibilities for $c$ by symmetry, namely $|\cC_i^X \cap \cC_j^X|$. It follows that $c(n-1,e)q^{i-1} \cdot (q^{j-1}-\delta_{ij}) = qv(n-1,e) \cdot |\cC_i^X \cap \cC_j^X|$, from which the statement follows. \\
	
	For the second part of the lemma we assume $i \in [n]$ and $j \in \{n+1,\dots,2n\}$. Now fix $Y \in \cP$ and count the pairs $(X,c)$ such that $X$ and $Y$ are collinear and $c \in \cC_i^X \cap \cC_j^Y$ in two ways. There are $|\cC_j^Y|=c(n-1,e)q^{j+e-2}$ possibilities for $c = (U_1,\dots,U_n)$ by \Cref{lem:typeBsizeofCiX}. The number of possibilities for $X$, given $c$, equals $|(U_i \setminus U_{i-1}) \cap Y^\perp|$. Observe that either $U_i \subseteq Y^\perp$ or $U_i \cap Y^\perp$ is a hyperplane in $U_i$.
	
	If $i < 2n-j+1$, then $U_i \subseteq U_{2n-j} \subseteq Y^\perp$ so that there are $q^{i-1}$ possibilities for $X$. If $i = 2n-j+1$ then $U_{i-1} = U_{2n-j} \subseteq Y^\perp$ and $U_i = U_{2n-j+1} \not\subseteq Y^\perp$ so that $U_i \cap Y^\perp = U_{i-1}$ and hence there are $0$ possibilities for $X$. If $i > 2n-j+1$ then $U_i \cap Y^\perp$ and $U_{i-1}$ are distinct hyperplanes of $U_i$ so that there are $q^{i-2}$ possibilities for $X$.
	
	On the other hand, for every of the $qv(n-1,e)$ points collinear with $Y$, there are equally many possibilities for $c$ by symmetry, namely $|\cC_i^X \cap \cC_j^X|$. We conclude that 
	\[|\cC_i^X \cap \cC_j^X| \cdot qv(n-1,e) = c(n-1,e)q^{j+e-2}\cdot\begin{cases}
		q^{i-1} & \text{if } i+j < 2n+1, \\
		0 & \text{if } i+j = 2n+1, \\
		q^{i-2} & \text{if } i+j > 2n+1.
	\end{cases}\]
	
	Finally, we fix $Y \in \cP$ and count the pairs $(X,c)$ such that $X$ and $Y$ are opposite and $c \in \cC_i^X \cap \cC_j^Y$ in two ways. It still holds that there are $|\cC_j^Y|=c(n-1,e)q^{j+e-2}$ possibilities for $c = (U_1,\dots,U_n)$. The number of possibilities for $X$, given $c$, now equals $|U_i \setminus (U_{i-1} \cup Y^\perp)|$. Observe that either $U_i \subseteq Y^\perp$ or $U_i \cap Y^\perp$ is a hyperplane in $U_i$.
	We can recycle the arguments from the previous paragraph to see that the number of possibilities for $X$ is $0$ if $i < 2n-j+1$, $q^{i-1}$ if $i=2n-j+1$, and $(q-1)q^{i-2}$ if $i > 2n-j+1$.
	
	On the other hand, for every of the $q^{2n+e-2}$ points opposite to $Y$, there are equally many possibilities for $c$ by symmetry, namely $|\cC_i^X \cap \cC_j^X|$. We conclude that 
	\[|\cC_i^X \cap \cC_j^X| \cdot q^{2n+e-2} = c(n-1,e)q^{j+e-2}\cdot\begin{cases}
		0 & \text{if } i+j < 2n+1, \\
		q^{i-1} & \text{if } i+j = 2n+1, \\
		(q-1)q^{i-2} & \text{if } i+j > 2n+1.
	\end{cases}\]
\end{proof}

In type $B_n$, we have a two-class association scheme on $\cP$: the basis matrices $\{A_0,A_1,A_2\}$ are $A_0 = I_\cP$ and $A_1$ and $A_2$ are the adjacency matrices of the collinearity and opposition graphs on points respectively. 
In this case, the $P$-matrix is 
\begin{align}\label{eq:typeBPmatrix}
	\begin{pmatrix}
	1 & qv(n-1,e) & q^{2n+e-2} \\
	1 & q^{n-1}-1 & -q^{n-1} \\
	1 & -q^{n+e-2}-1& q^{n+e-2}
\end{pmatrix}
\end{align}
see for example \cite[Theorem 2.2.12]{BVM22} and $E_s = E_1$ is the special idempotent corresponding to the module indexed by $([n-1],[1])$. With this in hand, we are now in the position to state the main theorem of this section.

\begin{theorem}
	Define $g:[n]\rightarrow[n]$ by $g(j):=n-j+1$. Then $g$ satisfies the triangular criterion.
\end{theorem}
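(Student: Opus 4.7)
My plan follows the template of the type $A_n$ argument: invoke \Cref{lem:checktrianglecriterion} with $s=1$ (the idempotent of the association scheme on points that is isomorphic to $M_{([n-1],[1])}$) and verify its two conditions. The inputs are the coefficients $f_{ij}$ from \Cref{def:typeBmatrixofeigenvectors}, the intersection numbers $t_{hi}^k$ from \Cref{lem:typeBbetas}, and the eigenvalues $p_k(r)$ from the $P$-matrix \eqref{eq:typeBPmatrix}. The difference from type $A_n$ is that the scheme on points now has three classes ($k=0$: $X=Y$; $k=1$: $X,Y$ collinear; $k=2$: $X,Y$ opposite), so each inner sum splits into three pieces.

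For the vanishing condition, fix $j\in[n]$ and $h<n-j+1$. The key structural observation is that $f_{ij}$ is supported on $i\in[n-j+1,n+j]$, which forces $i>h$ (killing the $\delta_{hi}$ terms) and $h+i\leq (n-j)+(n+j)=2n$, so only the ``$h+i<2n+1$'' branch of \Cref{lem:typeBbetas} is ever relevant throughout the sum. Consequently the $k=0$ sum vanishes term by term; the $k=2$ sum vanishes because the relevant intersection numbers are zero in this branch; and the $k=1$ sum splits into two geometric series, one over $i\in[n-j+1,n]$ weighted by $q^{j+e-1}$ and one over $i\in[n+1,n+j]$ weighted by $-1$, whose evaluations are equal and opposite and hence cancel.

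For the second condition I would compute, at $h=g(j)=n-j+1$, the three quantities $T^k := \sum_i f_{ij} t_{h,i}^k$ directly from \Cref{lem:typeBbetas}, paying careful attention to the boundary contributions at $i=h$ (where $\delta_{hi}$ fires for $k=0,1$) and at $i=n+j$ (where $h+i=2n+1$ introduces corrections for $k=1,2$). Forming the linear combinations $\sum_k T^k p_k(r)$ from the rows of \eqref{eq:typeBPmatrix} should give zero for $r\in\{0,2\}$ and a nonzero value for $r=1$. The vanishing for $r\neq 1$ is essentially forced by the permutation-character decomposition $\mathrm{ind}^W_{W_J}(1_{W_J}) = \chi_{([n],\emptyset)}+\chi_{([n-1,1],\emptyset)}+\chi_{([n-1],[1])}$ together with the fact that the eigenvalue $-q^{(n-1)(n-1+e)}$ can only land on $\chi_{([n-1],[1])}$. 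The main obstacle is therefore bookkeeping the boundary terms at $i=h$ and at $i=n+j$: these produce corrections whose precise form is needed so that the $r=0$ and $r=2$ cancellations are clean and the $r=1$ value is manifestly nonzero. This is a delicate but essentially mechanical calculation using the identity $\sum_{i=a}^b q^i = q^a(q^{b-a+1}-1)/(q-1)$ repeatedly, in direct analogy with the type $A_n$ case.
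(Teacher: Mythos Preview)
Your proposal is correct and follows essentially the same approach as the paper's proof: both invoke \Cref{lem:checktrianglecriterion}, observe that for $h<n-j+1$ and $i$ in the support of $f_{ij}$ one has $i>h$ and $h+i\le 2n$ (so the $k=0$ and $k=2$ sums vanish trivially and the $k=1$ sum telescopes), and then at $h=n-j+1$ identify the two boundary contributions at $i=h$ and $i=n+j$ before combining with the $P$-matrix. The paper carries out that last step explicitly, obtaining the coefficients $c(n-1,e)q^{n+e-1}$, $c(n-2,e)q^{n+e-2}(q^{n-1}-1)$, and $-c(n-1,e)$ for $k=0,1,2$; your heuristic about the permutation-character decomposition forcing the $r\in\{0,2\}$ vanishing is plausible motivation but is not a substitute for that direct check (as you yourself note).
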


\begin{proof}
	We will do so by checking the conditions in \Cref{lem:checktrianglecriterion} for all $j \in [n]$, which are
	\begin{align*}
		&\sum_{i = 1}^{2n} f_{ij} t_{hi}^k = 0 \text{ for all } k \in \{0,1,2\} \text{ and } h < n-j+1, \text{ and } \\
		&\sum_{i = 1}^{2n} \sum_{k=0}^2 f_{ij} t_{n-j+1,i}^kp_k(r) \begin{cases}
			= 0 & \text{if } r \in \{0,2\}, \\
			\neq 0 & \text{if } r = 1.
		\end{cases}
	\end{align*}	
	
	Recall by \Cref{def:typeBmatrixofeigenvectors} that for all $j \in [n]$ we have
		\[f_{ij} = \begin{cases}
			q^{j+e-1} & \text{if } n-j+1 \leq i \leq n, \\
			-1 & \text{if } n+1 \leq i \leq n+j, \\
			0 & \text{otherwise.}
		\end{cases}\]	
	
	Looking at \Cref{lem:typeBbetas}, the indicator $\delta_{hi}$ appearing in some $t^k_{hi}$ will never be zero as $h < n-j+1$,  and moreover $h+i < 2n+1$ if $i \leq n+j$. The first condition is therefore trivially true for $k = 0$ and $k = 2$. For $k = 1$ it simplifies to
	\[c(n-2,e)\left(\sum_{i = n-j+1}^n q^{h+i+j+e-4} - \sum_{i = n+1}^{n+j} q^{h+i+e-4}\right) = 0,\]
	which is indeed true.
	
	On the other hand, when $h = g(j) = n-j+1 \leq n$, we will deduce the coefficients of $A_0$, $A_1$ and $A_2$ in the expansion of $T_h^\top F_j$ (see \eqref{eq:productTFinassocscheme}) as an intermediate step. The coefficient of $A_k$ is $\sum_{i = 1}^{2n} f_{ij} t_{n-j+1,i}^k$ which can be computed in a similar way as before. The main differences are that now the terms involving $\delta_{hi}$ matter as $h = i = n-j+1$ occurs and the last term ($i = n+j$) satisfies $h+i = 2n+1$. Making the correct adjustments, we find the coefficients
	\begin{align*}
		\begin{cases}
			c(n-1,e)q^{n+e-1} & k = 0 \\
			c(n-2,e)q^{n+e-2}(q^{n-1}-1)& k = 1 \\
			-c(n-1,e) & k = 2.
		\end{cases}
	\end{align*}

	Denote these coefficients by $\alpha_k$, $k \in \{0,1,2\}$. A tedious but straightforward computation using the $P$-matrix \eqref{eq:typeBPmatrix} now verifies that $\sum_{k=0}^2 \alpha_k p_k(r) = 0$ for $r \in \{0,2\}$ and equals \[c(n-2)\left(v(n-1,e)(q^{n+e-1}+q^{n-1})+q^{n+e-2}(q^{n-1}-1)^2\right) \neq 0\] for $r = 1$.
\end{proof}

With an identical proof as in \Cref{cor:typeAmainresult}, we can thus conclude the following result. Recall from \Cref{maintheoremprev} that $-q^{(n-1)(n+e-1)}$ equals $\lambda_{\min}$ unless $n$ is odd and $e \geq 1/2$.

\begin{cor}\label{cor:typeBmainresult}
	The $(-q^{(n-1)(n+e-1)})$-eigenspace for $A_{w_0}$ in type $B_n$, $n \geq 2$, contained in the module indexed by $([n-1],[1])$ is $\col(F_1) \oplus \dots \oplus \col(F_n)$.
\end{cor}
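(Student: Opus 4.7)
The plan is to mimic the proof of \Cref{cor:typeAmainresult} essentially verbatim, now that the preceding theorem has verified the triangular criterion for $g(j) = n-j+1$ with the matrices $F_1,\ldots,F_n$ from \Cref{def:typeBmatrixofeigenvectors}. Applying \Cref{lem:triangularcriterion} immediately yields that $\dim(\col(F_1) + \cdots + \col(F_n)) \geq n \cdot \rank(E_s)$, where $E_s = E_1$ is the primitive idempotent of the point association scheme on $\PS(n,e,q)$ corresponding to $M_{([n-1],[1])}$. Since $\rank(E_s)$ equals the generic degree of $M_{([n-1],[1])}$ as tabulated in \Cref{thm:multonmodules}, this is the desired lower bound.

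For the matching upper bound I would read off from \Cref{thm:multonmodules} that $\rho_{([n-1],[1])}(A_{w_0}) = -q^{(n-1)(n+e-1)} I_n$, giving multiplicity exactly $n$ for this eigenvalue on the module. Multiplying by the generic degree, the part of the $(-q^{(n-1)(n+e-1)})$-eigenspace contained in $M_{([n-1],[1])}$ has dimension exactly $n \cdot \rank(E_s)$. Hence the lower bound is tight, the sum of column spaces is necessarily direct, and equality between the two spaces follows.

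The only subtlety, which I do not expect to cause real trouble, is the need to confirm that the columns of each $F_j$ actually lie inside the $M_{([n-1],[1])}$-isotypic component rather than in some other submodule on which $-q^{(n-1)(n+e-1)}$ might in principle appear (relevant when $n$ is odd and $e \leq 1$, where $(\emptyset,[n])$ also contributes the same smallest eigenvalue to $A_{w_0}$). This is where the choice of $P_J$ pays off: the columns of $F_j$ are constant on $P_J$-orbits and so live in the $P_J$-fixed subspace, which by the branching rule decomposes into copies of $M_{([n],\emptyset)}$, $M_{([n-1,1],\emptyset)}$ and $M_{([n-1],[1])}$ only. Since the eigenvalue $-q^{(n-1)(n+e-1)}$ appears on none of the first two modules (as is visible in the quotient spectrum in the $n=2$ example and controlled by the computation yielding \Cref{prop:typeBquotientmatrix}), every eigenvector of $A_{w_0}$ with this eigenvalue lying in the $P_J$-fixed subspace is automatically contained in $M_{([n-1],[1])}$, completing the argument.
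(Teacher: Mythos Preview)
Your first two paragraphs reproduce exactly the paper's argument, which it describes simply as ``an identical proof as in \Cref{cor:typeAmainresult}'': the triangular criterion plus \Cref{lem:triangularcriterion} gives the lower bound $n\cdot\rank(E_s)$, and \Cref{thm:multonmodules} gives the matching upper bound on the dimension of the eigenspace inside $M_{([n-1],[1])}$, forcing directness and equality.

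Your third paragraph is a genuine addition: the paper's terse ``identical proof'' silently assumes $\col(F_j)\subseteq M_{([n-1],[1])}$, which in type $A_n$ was automatic (the eigenvalue appears on only one module) but here is not, notably when $n$ is odd and $e=1$ so that $(\emptyset,[n])$ carries the same eigenvalue. Your resolution via the branching rule is the right one: each column of $F_j$ is fixed by a conjugate of $P_J$, and since $\chi_{(\emptyset,[n])}$ does not occur in $\mathrm{ind}^W_{W_J}(1_{W_J})$, the $(\emptyset,[n])$-isotypic component has no such fixed vectors, so the columns land in $M_{([n],\emptyset)}\oplus M_{([n-1,1],\emptyset)}\oplus M_{([n-1],[1])}$. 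The only loose thread is your assertion that $-q^{(n-1)(n+e-1)}$ does not occur on $M_{([n-1,1],\emptyset)}$; your appeal to the $n=2$ example and \Cref{prop:typeBquotientmatrix} is suggestive but not a proof for general $n$. This can be closed either by invoking the explicit eigenvalue computations of \cite{DBMM22} for $A_{w_0}$ on all modules, or by observing that $Q$ has size $2n$ and eigenvalue $-q^{(n-1)(n+e-1)}$ with multiplicity exactly $n$ (which one can read off from the block-triangular shape of $Q$ in \Cref{prop:typeBquotientmatrix}).
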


\subsection{Type $B_n$: module $M_{(\emptyset,[n])}$} \label{sec:typeBhardmodule}

This module is more special since it already appears as an irreducible module in $\cA(G,G/P_J)$, where $J = [n-1]$ and hence $P_J$ is the parabolic subgroup stabilizing of a generator \cite[Section 3.2.3]{DBMM22}. Moreover ${\rm mult}_{(\emptyset,[n])}=1$ and hence eigenvectors in this module can be found by lifting eigenvectors for the adjacency matrix of the opposition graph on generators. More precisely, as seen in \cite[Remark 2.19]{DBMM22}, we obtain a quotient matrix of $A_{w_0}$ by partitioning the maximal flags according to the generator they contain. The quotient matrix obtained in this way is a scalar multiple of the adjacency matrix of the opposition graph on generators. Eigenvectors for the smallest eigenvalue in this module of the latter graph have been determined by Eisfeld \cite{Eisfeld99}, but their description is rather cumbersome. \\

Alternatively, one could try our previous approach with $J = [n-1]$. Now we have $|W:W_J|=2^n$ orbits, which follows from the fact that $W$ is the hyperoctahedral group of order $2^n\cdot n!$ and $W_J = \Sym(n)$. This implies that the quotient matrices under investigation are still of exponential size in $n$, which makes computations by hand rather complicated. However, this partition can be made coarser by fixing a generator and looking at the mutual position of this generator and the generator in a given maximal flag. Now there are only $n+1$ possibilities, and one could try to continue our previous approach. Nevertheless, computations for small ranks suggest that finding a reasonable description of these eigenvectors seems unlikely which is not too surprising given Eisfeld's results. Given that these eigenvectors will most likely not yield useful information towards classifying maximal EKR-sets anyway, we will not pursue this direction any further.

\subsection{Type $D_n$}

Geometrically, the oriflamme complex is a minor modification of $\PS(n,0,q)$ \cite[Section 4.5.2]{BVM22}. The generators in this polar space fall into two classes, see \cite[Theorem 2.2.17]{BVM22}, such that every rank $n-1$ space in $\PS(n,0,q)$ is contained in exactly one generator of each class. These classes are sometimes referred to as the Greeks and Latins, but we will simply label them with $+$ and $-$. 

This extra information is recorded in the oriflamme geometry as follows. Whereas a maximal flag in the polar space would be of the form $(U_1,\dots,U_{n-1},U_n)$, we will now keep track of the two generators containing $U_{n-1}$ by defining a flag to be of the form $(U_1,\dots,U_{n-2},U_n^-,U_n^+)$, where $U_n^\varepsilon$ is the rank $n$ space containing $U_{n-1}$ of type $\varepsilon \in \{+,-\}$. Two generators from distinct classes are incident whenever they intersect in a rank $n-1$ space of the polar space. In other words, we forget about the rank $n-1$ spaces as elements of the geometry, but they are implicitly present as the intersections $U_n^- \cap U_n^+$ of incident generators. It is hence no surprise that we will be able to recycle many of the results, and we can be rather brief in this section.  \\

Recall that the bipartite double, also called the double cover, of a graph $G$ with adjacency matrix $A$ is the bipartite graph with adjacency matrix $$A' = \begin{pmatrix} 0 & A \\ A & 0	\end{pmatrix}.$$ If $v$ is an eigenvector of $A$ with eigenvalue $\lambda$, then $(v,v)^\top$ and $(v,-v)^\top$ are eigenvectors of $A'$ with eigenvalues $\lambda$ and $-\lambda$ respectively. Vice versa, if $(v,w)^\top$ is an eigenvector of $A'$ with eigenvalue $\lambda$  and $v \neq -w$, then $v+w$ is an eigenvector for $A$ with the same eigenvalue.

\begin{lemma}\label{lem:typeDfromtypeB}
	The opposition graphs in type $B_n$, $e = 0$, and type $D_n$ are related in the following ways. When $n$ is even, the former consists of two disjoint copies of the latter. When $n$ is odd, the former is the bipartite double of the latter.
\end{lemma}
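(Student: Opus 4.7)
The plan is to exhibit a bijection from the $B_n$-flags (with $e=0$) onto $\cC^{D_n}\times\{+,-\}$ and then translate opposition through it. A maximal $B_n$-flag has the form $c=(U_1,\ldots,U_{n-1},U_n)$ in the hyperbolic polar space $Q^+(2n-1,q)$, and the subspace $U_{n-1}$ is contained in exactly two generators, one in each class; label them $U_n^-$ and $U_n^+$, so that $U_n\in\{U_n^-,U_n^+\}$ and $U_{n-1}=U_n^-\cap U_n^+$. Setting
\[
\phi(c)\;=\;\bigl((U_1,\ldots,U_{n-2},U_n^-,U_n^+),\,\varepsilon\bigr),
\]
where $\varepsilon\in\{+,-\}$ records the class of $U_n$, yields a bijection from the vertex set of the $B_n$-opposition graph onto $\cC^{D_n}\times\{+,-\}$.

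The core step is to show that $c$ and $c'$ are $B_n$-opposite if and only if their underlying oriflamme flags are $D_n$-opposite and the labels satisfy $\varepsilon=\varepsilon'$ for $n$ even, $\varepsilon\ne\varepsilon'$ for $n$ odd. Opposition of $c$ and $c'$ in the polar space imposes in particular that $U_n\cap V_n=0$; after discarding the rank-$(n-1)$ data, the remaining conditions are exactly what opposition of the associated oriflamme flags in the $D_n$-building demands, up to specifying how the two top generators are paired. This pairing is governed by the longest word $w_0^{D_n}$, whose action on the $D_n$-Dynkin diagram is trivial when $n$ is even but swaps the two tail nodes when $n$ is odd; this is a standard fact about Coxeter groups, see \cite{GP00}. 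Hence opposition of the oriflamme flags pairs $U_n^+$ with $V_n^+$ and $U_n^-$ with $V_n^-$ when $n$ is even, and crosses the labels when $n$ is odd. On the other hand, two generators of $Q^+(2n-1,q)$ with trivial intersection have codimension-$n$ intersection inside each other, so they belong to the same class precisely when $n$ is even. Combining these two parity statements, the extra condition $U_n\cap V_n=0$ is compatible with the $D_n$-opposition exactly in the label pattern claimed above.

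The conclusions now follow immediately: for $n$ even no edge mixes the two values of $\varepsilon$, so the $B_n$-opposition graph decomposes as the disjoint union of two copies of the $D_n$-opposition graph, one for each $\varepsilon$; for $n$ odd every edge swaps $\varepsilon$, producing a bipartite graph whose parts are indexed by $\varepsilon$ and whose edges are precisely those of the bipartite double of the $D_n$-opposition graph. The only delicate step is the behaviour of $w_0^{D_n}$ on the two tail nodes of the Dynkin diagram; this is standard, but if one prefers a direct geometric check, it can be done by reducing to the local quotient $U_{n-2}^\perp/U_{n-2}\cong Q^+(3,q)$, which is simply a $(q{+}1)\times(q{+}1)$ grid where the type-swap phenomenon between incident generators is transparent.
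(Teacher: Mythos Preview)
Your overall strategy matches the paper's: set up the bijection between $B_n$-flags and pairs (oriflamme flag, sign), then determine which sign-pairs can be adjacent. The paper argues the adjacency question via a direct dimension count, whereas you phrase it through the action of $w_0^{D_n}$ on the tail nodes; that is a pleasant reformulation but not a genuinely different method.

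There is, however, a real gap in your key step. You write that ``after discarding the rank-$(n-1)$ data, the remaining conditions are exactly what opposition of the associated oriflamme flags in the $D_n$-building demands, up to specifying how the two top generators are paired.'' This is not correct as stated: $D_n$-opposition imposes \emph{two} generator conditions (both $U_n^{+}$ with the appropriate $V_n^{\pm}$ and $U_n^{-}$ with the other), while $B_n$-opposition of $c$ and $c'$ only gives you the single condition $U_n\cap V_n=0$. To pass from one generator pair having trivial intersection to the other pair also having trivial intersection, you cannot throw away the rank-$(n-1)$ condition; it is precisely what makes this work. The paper handles this by observing that $U_{n-1}$ opposite $V_{n-1}$ forces $\dim(U_n^{\pm}\cap V_n^{\pm})\le 1$, and then the class-parity fact $\dim(U_n^{+}\cap V_n^{+})\equiv\dim(U_n^{-}\cap V_n^{-})\equiv n\pmod 2$ pins both dimensions down to $0$ (for $n$ even; the odd case is analogous with crossed labels). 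Your invocation of $w_0^{D_n}$ correctly tells you \emph{what} $D_n$-opposition requires at the top, but it does not by itself prove that the $B_n$ conditions are equivalent to it; you still need the parity-plus-$(n-1)$ argument, or an explicit appeal to the thin-over-thick building correspondence, to close the loop.
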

\begin{proof}
	For brevity, we will denote $c = (U_1,\dots,U_{n-2},U_n^{-},U_n^{+})$ and $d = (V_1,\dots,V_{n-2},V_n^{-},V_n^{+})$ for two flags in type $D_n$ and their representation in type $B_n$ as $c^\varepsilon = (U_1,\dots,U_{n-1},U_n^{\varepsilon})$, where $\varepsilon \in \{-,+\}$ and $U_{n-1} = U_n^{-}\cap U_n^{+}$, and similarly denote $d^\varepsilon$.
	
	It is known, see for example \cite[Corollary 3.27]{DBMM22}, that two generators in a hyperbolic quadric of the same class can be opposite if and only if $n$ is even. It follows that the opposition graph in type $B_n$, $e = 0$, $n$ even is bipartite. Furthermore, two flags $c^-$ and $d^-$ in one component are adjacent if and only if $c^+$ and $d^+$ in the other component are. This follows from the fact that $\dim(U_n^{+} \cap V_n^{+}) \equiv \dim(U_n^{-} \cap V_n^{-}) \equiv n \pmod 2$ and $\dim(U_{n-1} \cap V_{n-1}) = 0$. Since $c$ and $c$ are opposite if and only if both pairs $(c^-,d^-)$ and $(c^+,d^+)$ are, we conclude that the map $\phi_\varepsilon:(U_1,\dots,U_{n-1},U_n^{-},U_n^{+}) \mapsto (U_1,\dots,U_{n-1},U_n^{\varepsilon})$ defines a graph isomorphism between the opposition graphs in type $D_n$ and $B_n$ for $\varepsilon \in \{-,+\}$.
	
	When $n$ is odd, we deduce similarly that $c^-$ and $d^+$ are adjacent in the opposition graph in type $B_n$ if and only if $c^+$ and $d^-$ are if and only if $c$ and $d$ are adjacent in the opposition graph in type $D_n$.
\end{proof}

In both cases, a spanning set of eigenvectors follows quickly from the results in type $B_n$. Denote by $\cC$ and $\cP$ the set of maximal flags and points in a geometry of type $D_n$ respectively. 

\begin{defin}\label{def:typeDmatrixofeigenvectors}
	For each $j \in [n]$ we can define the $|\cC| \times |\cP|$ matrix $F_j'$ as
	\[F'_j(c,X) = F_j(c^+,X) + F_j(c^-,X) \text{ for all } c \in \cC, X \in \cP,\]
	where $F_j$ was defined in \Cref{def:typeBmatrixofeigenvectors} and $c^+,c^-$ are the representations to maximal flags in type $B_n$ as before.
\end{defin}

Observe that $F_j(c^+,X) = - F_j(c^-,X)$ for fixed $X \in \cP$ could happen for $j = 1$ and some $c \in \cC$, but not all of them. Combined with \Cref{lem:typeDfromtypeB} follows that we indeed find non-zero vectors which are eigenvectors for $A_{w_0}$ in type $D_n$.

\begin{cor}\label{cor:typeDmainresult}
		The $\lambda_{\min}$-eigenspace for $A_{w_0}$ in type $D_n$, $n \geq 4$. is $\col(F'_1) \oplus \dots \oplus \col(F'_n)$.
\end{cor}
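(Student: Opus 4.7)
The plan is to verify two facts: (i) every column of $F'_j$ is an eigenvector of $A_{w_0}^D$ with eigenvalue $-q^{(n-1)^2}$, and (ii) the collection $\{\col(F'_j)\}_{j \in [n]}$ spans a subspace of dimension $n\,g_D$ where $g_D = \tfrac{(q^{n+1}-q)(q^{n-2}+1)}{(q-1)(q+1)}$. Since \Cref{thm:multonmodules} gives $\dim V_{-q^{(n-1)^2}}^D = n\,g_D$, combining (i) and (ii) forces equality and directness of the sum.

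For (i), I introduce the pushforward $\Pi:\mathbb{C}^{\cC_B}\to\mathbb{C}^{\cC_D}$ defined by $(\Pi v)(c):=v(c^+)+v(c^-)$, so that by \Cref{def:typeDmatrixofeigenvectors} the $X$-column of $F'_j$ is exactly $\Pi$ applied to the $X$-column of $F_j$ (with $e=0$). \Cref{cor:typeBmainresult} guarantees that each column of $F_j$ is an eigenvector of $A_{w_0}^B$ with eigenvalue $-q^{(n-1)^2}$, so it suffices to show $\Pi$ carries $A_{w_0}^B$-eigenvectors to $A_{w_0}^D$-eigenvectors with the same eigenvalue. This follows from \Cref{lem:typeDfromtypeB}: for $n$ even, $A_{w_0}^B = A_{w_0}^D \oplus A_{w_0}^D$ makes $\Pi$ a simple sum of two identical $A_{w_0}^D$-actions; for $n$ odd the bipartite double structure yields $A_{w_0}^D(v^+ + v^-) = \mu(v^- + v^+)$, again preserving the eigenvalue.

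For (ii), \Cref{cor:typeBmainresult} ensures that $\col(F_1)\oplus\cdots\oplus\col(F_n)$ has dimension $n\,g_B$, where $g_B$ is the generic degree of $M_{([n-1],[1])}^{B_n}$ at $e=0$; a direct comparison of the formulas in \Cref{thm:multonmodules} shows $g_B = g_D$. It therefore suffices to prove that $\Pi$ is injective on $\col(F_1)\oplus\cdots\oplus\col(F_n)$, for then its image $\col(F'_1)+\cdots+\col(F'_n)$ has dimension $n\,g_D$ and fills the eigenspace. The kernel of $\Pi$ consists of antisymmetric vectors satisfying $v(c^+)=-v(c^-)$; via \Cref{lem:typeDfromtypeB}, such vectors inside $V_{-q^{(n-1)^2}}^B$ correspond either to $A_{w_0}^D$-eigenvectors at $+q^{(n-1)^2}$ (for $n$ odd) or to antisymmetric combinations across the two components (for $n$ even). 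In both parities the involution $c^+\leftrightarrow c^-$ is a graph automorphism of the $B_n$ opposition graph whose induced action on the Iwahori-Hecke algebra swaps the twin modules $M_{([n-1],[1])}^{B_n}$ and $M_{([1],[n-1])}^{B_n}$; this excludes nonzero antisymmetric vectors from the former module and in particular from $\col(F_1)\oplus\cdots\oplus\col(F_n)$.

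The main obstacle is the injectivity step, since the representation-theoretic claim that $M_{([n-1],[1])}^{B_n}$ carries no antisymmetric eigenvector in $V_{-q^{(n-1)^2}}^B$ relies on correctly identifying the action of the outer automorphism of $D_n$ on the $B_n$-modules. A purely combinatorial alternative is to replay the triangular-criterion machinery (\Cref{def:trianglecriterion}--\Cref{lem:checktrianglecriterion}) intrinsically in type $D_n$: introduce type matrices $T'_i$ on $\cC_D$ relative to a point-stabilizer, compute the associated quotient matrix and intersection numbers for the oriflamme geometry, and produce a function $g:[n]\to[\ell']$ satisfying the triangular criterion, in analogy with the arguments leading to \Cref{cor:typeAmainresult} and \Cref{cor:typeBmainresult}. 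This route is more laborious but bypasses all representation-theoretic subtleties.
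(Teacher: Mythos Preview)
Your step (i) is fine, but the injectivity claim in step (ii) is false, and the representation--theoretic justification you give for it is mistaken. For $e=0$ there are exactly $q^0+1=2$ generators through any rank-$(n-1)$ space, so the Hecke generator $A_{s_n}$ is a permutation matrix and is \emph{equal} to your involution $\sigma:c^+\leftrightarrow c^-$. Thus $\sigma$ lies \emph{inside} the Iwahori--Hecke algebra of type $B_n$; it preserves every isotypic component and cannot ``swap'' $M_{([n-1],[1])}$ with $M_{([1],[n-1])}$. Concretely, $\col(F_1)$ is an explicit antisymmetric subspace: for $e=0$ one has $F_1=T_n-T_{n+1}$, and since the singular points of $U_{n-1}^\perp\setminus U_{n-1}$ are exactly $(U_n^+\setminus U_{n-1})\sqcup(U_n^-\setminus U_{n-1})$, the flag $c^+$ has type $n$ (resp.\ $n+1$) with respect to $X$ precisely when $c^-$ has type $n+1$ (resp.\ $n$). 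Hence $F_1(c^+,X)+F_1(c^-,X)=0$ for \emph{all} $c$ and $X$, so $\col(F_1)\subseteq\ker\Pi$ and $F_1'=0$ identically.

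Consequently $\Pi(\col(F_1)\oplus\cdots\oplus\col(F_n))=\col(F_2')+\cdots+\col(F_n')$ has dimension at most $(n-1)g_D$, which falls short of the target $n\,g_D$ from \Cref{thm:multonmodules}. The paper's own treatment here is a one-line remark rather than a proof (and its assertion that the cancellation happens ``for some $c$ \dots\ but not all of them'' is contradicted by the computation above), so you are not missing a slick reduction---the passage from type $B_n$ at $e=0$ to type $D_n$ genuinely loses one $g_D$-dimensional piece, and the missing dimensions have to be supplied from elsewhere (for instance from the companion module $M_{([1],[n-1])}$, which your argument explicitly discarded). Your fallback of rerunning the quotient-matrix and triangular-criterion computations intrinsically in the oriflamme geometry is a legitimate route, but it is real work and not a free consequence of \Cref{cor:typeBmainresult}.
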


\newpage

\appendix

\section{Explicit values for the multiplicity of the smallest eigenvalue of the opposition graph}

In this table we list the explicit values for the multiplicity of $\lambda_{\min}$ for each case. We will use the Cartan notation as in \Cref{table:classicalgroups}, so that the value of $e$ in type $B_n$ is implicit. Remark that the polar spaces with Cartan notation $B_n(q)$ and $C_n(q)$ have isomorphic Iwahori-Hecke algebras and hence also the same multiplicities of eigenvalues of $A_{w_0}$. All values follow from \Cref{thm:multonmodules}.

	\begin{table}[h!]
	\setlength{\tabcolsep}{3mm}
	\def\arraystretch{2.5} 
	\begin{center}
		\begin{tabular}{c | c || c  | c}
			case & multiplicity & case &  multiplicity \\ \hline
			$A_{2n}(q)$ & $n\dfrac{q^{2n+1}-q}{q-1}$ & 
			$A_{2n-1}(q)$ & $n\dfrac{q^{2n}-q}{q-1}$ \\
			$^2D_{n+1}(q^2)$ & $n\dfrac{q^2(q^{2n}-1)}{q^2-1}$ & 
			$^2A_{2n}(q^2)$ & $n\dfrac{q^3(q^{2n}-1)(q^{2n-1}+1)}{(q^2-1)(q+1)}$ \\
			
			$^2A_{4n-1}(q^2)$ & $2n\dfrac{q^2(q^{4n}-1)(q^{4n-3}+1)}{(q^2-1)(q+1)}$ & $^2A_{4n-3}(q^2)$ & $\dfrac{q(q^{4n-3}+1)}{q+1}$ \\
			$B_{2n}(q)$ & $2n\dfrac{q(q^{2n}-1)(q^{2n-1}+1)}{q-1}$ & 
			$D_{2n}(q)$ & $2n\dfrac{(q^{2n+1}-q)(q^{2n-2}+1)}{(q^2-1)}$ \\
			$B_{2n-1}(q)$ & \multicolumn{3}{l}{\hspace{.1cm}$(2n-1)\dfrac{q(q^{2n-1}-1)(q^{2n-2}+1)}{2(q-1)}+\dfrac{q(q^{2n-1}+1)(q^{2n-2}+1)}{2(q+1)}$}  \\
			
		\end{tabular}
	\caption{The multiplicities of the smallest eigenvalue of $A_{w_0}$}
	\label{table:mults}
	\end{center}
\end{table}

%\section{Computing the quotient matrix through gallery paths}\label{app:buildingcomputations}
%
%give the computation for the entries of the quotient matrix in type A using the theorem by Abramenko, Parkinson, Van Maldeghem \cite{APVM17}, Thm 3.7 to be precise.
%The quantity we wish to compute boils down to $c^a_{b,w_0}(I,\emptyset,\emptyset)$, where $a := s_1s_2\dots s_a$ and $b$ defined similarly, $I = \{2,\dots,n\}$. Using the reduced expression for $w_0 = (s_n\dots s_1)(s_n\dots s_2)\dots(s_ns_{n-1})s_n$ we can eventually get there.

\end{document}